\documentclass[
11pt,                          
draft,                         
english                        
]{article}

\usepackage[baseline]{euflag}
\usepackage{amsmath}           
\usepackage{amssymb}           
\usepackage[utf8]{inputenc}    
\usepackage[T1]{fontenc}       
\usepackage[a4paper]{geometry} 
\usepackage{setspace}          
\usepackage{mathtools}         
\usepackage{stmaryrd}          
\usepackage[amsmath,thmmarks,hyperref]{ntheorem} 
\usepackage{bbm}
\usepackage[activate={true,nocompatibility},
            final,tracking=true,kerning=true,
            spacing=true,factor=1100,
            stretch=10,shrink=10,expansion=false
           ]{microtype}
\microtypecontext{spacing=nonfrench}  
\definecolor{mydarkblue}{RGB}{0,0,155}
\usepackage[
           final=true,         
           colorlinks=true,    
           allcolors=mydarkblue,  
           hypertexnames=false,
           plainpages=false,              
           pdfpagelabels=true,            
           pdfencoding=auto,              
           unicode=true                   
                      ]{hyperref}         
\usepackage[shortcuts]{extdash}  

\renewcommand{\mathbb}[1]{\mathbbm{#1}}

\theoremheaderfont{\normalfont\bfseries}
\theorembodyfont{\itshape}
\newtheorem{lemma}{Lemma}

\newtheorem{proposition}[lemma]{Proposition}
\newtheorem{theorem}[lemma]{Theorem}
\newtheorem{corollary}[lemma]{Corollary}
\newtheorem{definition}[lemma]{Definition}

\theorembodyfont{\rmfamily}

\newtheorem{remark}[lemma]{Remark}

\theoremstyle{nonumberplain}
\theoremheaderfont{\normalfont\bfseries}
\theorembodyfont{\itshape}

\theoremheaderfont{\normalfont\bfseries}
\theorembodyfont{\normalfont}
\theoremseparator{:}
\theoremsymbol{\hbox{$\boxempty$}}
\newtheorem{proof}{Proof}

\makeatletter
\newcommand{\myitem}[1][argument not optional]{%
\item[#1.)]\protected@edef\@currentlabel{\textit{#1}}%
}
\makeatother

\newcommand{\argument}       {\ignorespaces{\,\cdot\,}\ignorespaces}

\DeclarePairedDelimiter{\abs}{\lvert}{\rvert}
\DeclarePairedDelimiter{\norm}{\lVert}{\rVert}

\DeclarePairedDelimiter{\ordinaryIP}{\langle}{\rangle}
\DeclarePairedDelimiter{\ordinarySet}{\{}{\}}
\DeclarePairedDelimiter{\ordinaryKom}{[}{]}

\newcommand{\RR}{\mathbb{R}}
\newcommand{\NN}{\mathbb{N}}
\newcommand{\ZZ}{\mathbb{Z}}
\newcommand{\CC}{\mathbb{C}}
\newcommand{\DD}{\mathbb{D}}
\newcommand{\set}[3][]{\ordinarySet[#1]{\,#2 \;#1|\; #3\,}}
\newcommand{\skal}[3][]{\ordinaryIP[#1]{\,#2 \,#1|\, #3\,}}
\newcommand{\ring}[1]{\mathcal{#1}}
\newcommand{\A}{\ring{A}}
\newcommand{\SOS}[1]{{\textstyle \sum\!#1^2}}
\newcommand{\D}{\mathrm{d}}
\newcommand{\Stetig}{\mathcal{C}}
\newcommand{\E}{\mathrm{e}}
\newcommand{\Motzkin}{\mathcal{M}}
\newcommand{\coefficient}[3][]{\ordinaryKom[#1]{#2}_{#3}}
\newcommand{\rad}{r}
\newcommand{\Kernel}{\mathcal K}
\newcommand{\Tail}{\mathcal T}
\newcommand{\IOp}{\mathcal I}
\newcommand{\Nexpl}{N_\mathrm{expl}}
\newcommand{\firstInternalTag}{\tag{\textup{$\clubsuit$}}}
\newcommand{\secondInternalTag}{\tag{\textup{$\spadesuit$}}}
\newcommand{\thirdInternalTag}{\tag{\textup{$\blacklozenge$}}}
\DeclareMathOperator{\ub}{ub}

\title{%
The Effective Lasserre's Perturbative Positivstellensatz
}

\author{
Igor Klep\thanks{University of Ljubljana, Faculty of Mathematics and Physics, Jadranska 21, 1000 Ljubljana \&
University of Primorska, Faculty of Mathematics, Natural Sciences and Information Technologies,
Glagoljaška 8, 6000 Koper, Slovenia. Email: \texttt{igor.klep@fmf.uni-lj.si}} \and
Victor Magron\thanks{Universit\'{e} de Toulouse; LAAS-CNRS, 7 avenue du colonel Roche, F-31400 Toulouse, France. Email: \texttt{victor.magron@laas.fr}} \and
Matthias Schötz\thanks{Universität Würzburg, Institute of Mathematics, Emil-Fischer-Straße 31, 97074 Würzburg, Germany. Email: \texttt{matthias.schoetz@uni-wuerzburg.de}}
}

\date{} 

\begin{document}

\begin{onehalfspace}

\maketitle

\begin{abstract}
We study sum-of-squares (SOS) certificates for nonnegative polynomials $p$ on $\RR^d$ and their implications for polynomial optimization over unbounded domains. Building on Lasserre’s perturbation approach, we consider SOS representations of $p$ augmented by weighted polynomial tails of the form $\sum_{n=0}^N (x\cdot x)^n/(n!)^t$ for $0<t<1$. 
Our main result provides an explicit quantitative bound on the truncation order $N$ required to achieve an $\varepsilon$-accurate certificate. Using positivity properties of the Mehler kernel and techniques inspired by polynomial kernel methods, we show that $N$ grows polynomially in $1/\varepsilon$, with rate $N = O((\|p\|/\varepsilon)^{1/(1-t)})$. 
\end{abstract}

\noindent\textbf{Keywords.} Polynomial optimization; sum of squares; Positivstellensatz;
noncompact optimization; kernel methods.

\medskip
\noindent\textbf{MSC 2020.} 90C26; 13J30; 90C22; 42C05; 14P10; 33C45; 41A25.

\allowdisplaybreaks

\section{Introduction}
\subsection{Main results}
We write $\RR[x_1,\dots,x_d]$ for the algebra of real polynomials in $d \in \NN$ variables $x_1,\dots,x_d$,
and we define the convex cone of sums of squares,
\begin{equation*}
  \SOS{\RR[x_1,\dots,x_d]}
  \coloneqq
  \set[\Big]{
    \sum\nolimits_{\ell=1}^k p_\ell^2
  }{
    k\in \NN_0;\;
    p_1,\dots,p_k \in \RR[x_1,\dots,x_d]
  }.
\end{equation*}
Clearly $p(\xi) \ge 0$ for all $p \in \SOS{\RR[x_1,\dots,x_d]}$ and $\xi \in \RR^d$.
In the one-dimensional case, the converse is also true, i.e.,
a polynomial $p \in \RR[z]$ fulfills $p(\xi) \ge 0$ for all $\xi \in \RR$ if and only if $p \in \SOS{\RR[z]}$
(this is a well-known consequence of the fundamental theorem of algebra, see e.g.~\cite[Prop.~3.1]{schmuedgen:TheMomentProblem}).
But if $d\ge 2$, then there is e.g.~the Motzkin polynomial
\begin{equation*}
  \Motzkin \coloneqq x_1^2 x_2^2 (x_1^2 + x_2^2 - 3) + 1 \in \RR[x_1,\dots,x_d],
\end{equation*}
which fulfills $\Motzkin(\xi) \ge 0$ for all $\xi \in \RR^d$, but $\Motzkin + \lambda \notin \SOS{\RR[x_1,\dots,x_d]}$
for all $\lambda \in \RR$ \cite[Prop.~1.2.2 and Rem.~1.2.3]{marshall}.
However, $\SOS{\RR[x_1,\dots,x_d]}$ is in fact dense in the convex cone of pointwise positive polynomials
in the following sense:

\begin{theorem} \label{theorem:Lasserre}
  Let $p \in \RR[x_1,\dots,x_d]$, then the following are equivalent:
  \begin{enumerate}
    \myitem[i] \label{item:Lasserre:pos}
      $p(\xi) \ge 0$ for all $\xi \in \RR^d$;
    \myitem[ii] \label{item:Lasserre:perturbation}
      For every $\epsilon \in {]0,\infty[}$ there is $N\in \NN_0$ such that
      \begin{equation}
        \label{eq:Lasserre}
        p + \epsilon \sum_{n=0}^N \sum_{j=1}^d \frac{x_j^{2n}}{n!} \in \SOS{\RR[x_1,\dots,x_d]}
        ;
      \end{equation}
    \myitem[ii'] \label{item:Lasserre:perturbationStronger}
      For every $\epsilon \in {]0,\infty[}$ there is $N\in \NN_0$ such that
      \begin{equation}
        \label{eq:LasserreStronger}
        p + \epsilon \sum_{n=0}^N \sum_{j=1}^d \frac{x_j^{2n}}{(2n)!} \in \SOS{\RR[x_1,\dots,x_d]}
        .
      \end{equation}
  \end{enumerate}
\end{theorem}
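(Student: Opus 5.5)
We prove the cycle \ref{item:Lasserre:perturbationStronger}\,$\Rightarrow$\,\ref{item:Lasserre:perturbation}\,$\Rightarrow$\,\ref{item:Lasserre:pos}\,$\Rightarrow$\,\ref{item:Lasserre:perturbationStronger}. The first two implications are elementary.

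For \ref{item:Lasserre:perturbationStronger}\,$\Rightarrow$\,\ref{item:Lasserre:perturbation}, note that $1/n! \ge 1/(2n)!$. Hence the difference of the two tails is $\epsilon\sum_{n,j}\bigl(\tfrac{1}{n!}-\tfrac{1}{(2n)!}\bigr)(x_j^n)^2$, which is a sum of squares. Since $\SOS{\RR[x_1,\dots,x_d]}$ is a convex cone, \eqref{eq:LasserreStronger} implies \eqref{eq:Lasserre}.

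For \ref{item:Lasserre:perturbation}\,$\Rightarrow$\,\ref{item:Lasserre:pos}, evaluate \eqref{eq:Lasserre} at $\xi\in\RR^d$. This gives $p(\xi)\ge -\epsilon\sum_j\sum_{n\le N}\xi_j^{2n}/n! \ge -\epsilon\sum_j \E^{\xi_j^2}$. The right-hand side does not depend on $N$, so letting $\epsilon\to 0$ yields $p(\xi)\ge 0$.

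The main implication is \ref{item:Lasserre:pos}\,$\Rightarrow$\,\ref{item:Lasserre:perturbationStronger}, which we prove by contradiction via duality and the moment problem. Write $\theta_N\coloneqq\sum_{n\le N}\sum_j x_j^{2n}/(2n)!$ and fix $\epsilon>0$. Suppose $p+\epsilon\theta_N\notin\SOS{\RR[x_1,\dots,x_d]}$ for all $N$. The cone of sums of squares of degree $\le 2N$ is closed in the finite-dimensional space $\RR[x]_{\le 2N}$, and it contains $1$ in its interior. So Hahn--Banach gives a linear functional $L_N$ on $\RR[x]_{\le 2N}$ with $L_N(q^2)\ge 0$ for $\deg q\le N$, $L_N(1)=1$, and $L_N(p+\epsilon\theta_N)\le 0$.

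The next step is a uniform moment bound. Let $2D\ge\deg p$. There is $c>0$ with $p+c(1+\sum_j x_j^{2D})$ SOS. Each univariate polynomial $c\,t^{2D}-\tfrac{\epsilon}{2}t^{2D+2}/(2D+2)!$ is bounded above by a constant, and nonnegative univariate polynomials are SOS. Hence $c(1+\sum_j x_j^{2D})\le K+\tfrac{\epsilon}{2}\theta_N$ modulo SOS, for $N\ge D+1$ and a constant $K$ independent of $N$. This gives $L_N(p)\ge -K-\tfrac{\epsilon}{2}L_N(\theta_N)$, and therefore $L_N(\theta_N)\le 2K/\epsilon$. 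In particular $L_N(x_j^{2n})\le (2K/\epsilon)(2n)!$ for all $n\le N$ and all $j$.

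By positivity (Cauchy--Schwarz and the AM--GM inequality for even monomials), all moments $L_N(x^\alpha)$ are then bounded uniformly in $N$. A diagonal subsequence therefore converges pointwise to a functional $L$ on $\RR[x_1,\dots,x_d]$ with the following properties:
\begin{itemize}
\item $L(q^2)\ge 0$ for all $q$, and $L(1)=1$;
\item $L(x_j^{2n})\le C(2n)!$ for all $n$ and $j$;
\item $L(p)\le -\epsilon L(\theta_N)\le-\epsilon d$ for every $N$, using $\theta_N\ge \sum_j 1$ in the SOS sense.
\end{itemize}
Since $\bigl(C(2n)!\bigr)^{1/(2n)}\sim 2n/\E$, the multivariate Carleman condition $\sum_n L(x_j^{2n})^{-1/(2n)}=\infty$ holds for each $j$. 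Nussbaum's theorem then represents $L$ by a positive Borel measure $\mu$ on $\RR^d$. It follows that $L(p)=\int p\,\D\mu\ge 0$, contradicting $L(p)\le-\epsilon d<0$.

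The main obstacle is this last step: verifying that the limit functional is a genuine moment functional, i.e.\ invoking the multivariate Carleman/Nussbaum criterion correctly. A secondary technical point is the bookkeeping that the bounds and the inequality $L_N(p)\le -\epsilon d$ survive the truncation at degree $2N$.
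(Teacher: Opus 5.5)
Your argument works in outline, but one justification is false. For $N\ge 1$ the constant $1$ is \emph{not} an interior point of the truncated cone $\SOS{\RR[x_1,\dots,x_d]}\cap\RR[x_1,\dots,x_d]_{2N}$. Indeed, $1-t\,x_1^{2N}\to 1$ as $t\to 0$, yet it is never a sum of squares for $t>0$. So a nonzero functional that is nonnegative on this cone may vanish at $1$; the map sending $q$ to its coefficient of $x_1^{2N}$ is an example. Hence $L_N(1)=1$ is not automatic.

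The repair is easy. The truncated cone is closed, so you can separate strictly: $L_N(p+\epsilon\theta_N)<0\le L_N(q)$ for all $q$ in the cone. Then replace $L_N$ by $L_N+\delta\,\mathrm{ev}_0$ (evaluation at $0$) with $\delta>0$ small, which keeps these inequalities and makes the value at $1$ positive, and rescale.

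Note also that the dominations you invoke must hold in the order $\preccurlyeq$, not just pointwise. This concerns the claim that $p+c(1+\sum_j x_j^{2D})$ is a sum of squares, and the moment bounds via ``AM--GM''. Both are true (e.g.\ by Hurwitz's sum-of-squares version of AM--GM). It is simpler, though, to use the bounds $-c\,\theta_{D'}\preccurlyeq p$ and $\pm x^\alpha\preccurlyeq c_\alpha\theta_{D_\alpha}$ that the argument of Lemma~\ref{lemma:absorbing} supplies, with possibly larger $D',D_\alpha$. Your $\epsilon/2$ argument goes through verbatim with these.

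With this fix your proof is correct, and it takes a genuinely different route from the paper. The paper makes a single Hahn--Banach separation in the infinite-dimensional space $\RR[x_1,\dots,x_d]$, with respect to the Minkowski seminorm of $U=\bigcup_N[-\theta_N,\theta_N]$; Lemma~\ref{lemma:absorbing} is exactly what makes this seminorm finite. Boundedness of the separating positive functional $\omega$ on $U$ gives $\omega(\theta_N)\le\mu$ for all $N$ at once, hence $\omega(x_j^{2n})\le\mu\,(2n)!$. Nussbaum's theorem then applies directly, with no truncation, normalization or limiting procedure.

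You instead separate in each finite-dimensional space $\RR[x_1,\dots,x_d]_{2N}$, recover the uniform control $L_N(\theta_N)\le 2K/\epsilon$ from your $\epsilon/2$ coercivity trick, and build the global functional by a diagonal compactness argument. This is close in spirit to Lasserre's original duality proof. Your route needs only finite-dimensional convexity and makes the uniform moment bound explicit. The price is extra bookkeeping: normalizing, bounding all moments uniformly, and passing inequalities to the limit. The paper's choice of topology builds that uniformity into the separation itself, which is why its proof is shorter. Both proofs rest on the same elementary input: every monomial is dominated in $\preccurlyeq$ by some multiple of some $\theta_{N'}$.
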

The equivalence of \ref{item:Lasserre:pos} and \ref{item:Lasserre:perturbation} has been proven in \cite{lasserre}, 
by combining Carleman's condition of the theory of moments and a duality result of convex optimization.
Our first contribution is a short proof of the equivalence of \ref{item:Lasserre:pos} and \ref{item:Lasserre:perturbationStronger}
by a simple separation argument (and utilizing Carleman's condition).
While the nontrivial implication \ref{item:Lasserre:pos}~$\Rightarrow$~\ref{item:Lasserre:perturbationStronger}
is formally stronger than \ref{item:Lasserre:pos}~$\Rightarrow$~\ref{item:Lasserre:perturbation}, this improved asymptotics
of the perturbation term can in fact also be obtained by minor adaptations of the original argument in \cite{lasserre},
yet our proof is significantly shorter (see Section~\ref{section:easyProof}).

We write $\lfloor\argument \rfloor, \lceil\argument \rceil \colon \RR \to \ZZ$
for the floor and ceiling functions,
i.e., $\lfloor \lambda \rfloor \coloneqq \max \set{z\in \ZZ}{z \le \lambda}$
and $\lceil \lambda \rceil\coloneqq \min \set{z\in \ZZ}{\lambda \le z}$.
Our main result is an explicit estimate for the upper bound of the degree of the perturbation like in \eqref{eq:Lasserre} and \eqref{eq:LasserreStronger},
but for more generous perturbation terms.

\begin{theorem} \label{theorem:effective}
  Let $M \in \NN$, $p\in \RR[x_1,\dots,x_d]$ of degree at most $2M$, and $t\in {[0,1[}$, $\epsilon \in {]0,\infty[}$.
  Assume that $p(\xi) \ge 0$ for all $\xi \in \RR^d$ and set
  \begin{equation}
    \label{eq:effective:mu}
    \mu \coloneqq M\,\binom{2M+d-1}{d-1}^{\!\frac{1}{2}} 2^{M+\frac12} \in {]0,\infty[}.
  \end{equation}
  Let $\Nexpl(p/\epsilon) \in \NN_0$ be the maximum of
  \begin{equation}
    \label{eq:Nexpl:lame}
    2M + \biggl\lceil \frac{5d}{2} \biggr\rceil + 2
  \end{equation}
  and
  \begin{equation}
    \label{eq:Nexpl:relevant}
    \Bigl\lceil
    2\E \bigl( 4\mu\,\norm{p/\epsilon} \bigr)^{\frac{1}{1-t}}
    \Bigr\rceil
    +
    \max\Biggl\{
      0
      ,
      \biggl\lceil
      \frac{1}{1-t}
      \ln\biggl( \frac{ (d+1)^2\,2^{2d}\, \norm{p / \epsilon} }{ \mu } \biggr)
      \biggr\rceil
    \Biggr\}
    +
    \biggl\lfloor \frac{3d}{2} \biggr\rfloor
    +
    1
    ,
  \end{equation}
  where $\E$ is Euler's number and with the norm $\norm{\argument}$ on $\RR[x_1,\dots,x_d]$ that will be defined in
  Definition~\ref{definition:norm}. Then
  \begin{equation}
    p
    +
    \epsilon \sum_{n=0}^{\Nexpl(p/\epsilon)} \frac{1}{(n!)^t} \Bigl(\sum\nolimits_{j=1}^d x_j^2 \Bigr)^n
    \in
    \SOS{\RR[x_1,\dots,x_d]}.
  \end{equation}
\end{theorem}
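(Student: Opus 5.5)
Write $q \coloneqq p/\epsilon$, which is nonnegative on $\RR^d$. It suffices to show that $q + \Tail_N$ is a sum of squares, where $\Tail_N \coloneqq \sum_{n=0}^N (x\cdot x)^n/(n!)^t$ and $N = \Nexpl(q)$. The approach is a kernel-smoothing argument in the spirit of polynomial kernel methods, using the Mehler kernel.

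\textbf{Step 1: smoothing operator.} Choose a parameter $r\in{]0,1[}$ and let $\IOp_r$ be the integral operator with the $d$-dimensional Mehler kernel $\Kernel_r(x,y)$ with respect to the Gaussian weight. This operator acts diagonally on Hermite polynomials, $H_\alpha \mapsto r^{\abs{\alpha}} H_\alpha$. The Mehler kernel is a pointwise positive function. Moreover, $\Kernel_r(x,y)\,\E^{-y\cdot y}$ can be written, for fixed $y$, as an exponential of a quadratic in $x$ times a positive factor. Hence $\IOp_r q = \int q(y)\,\Kernel_r(\argument,y)\,\D\gamma(y)$ is a nonnegative mixture of Gaussian-type functions $\E^{-a\abs{x-cy}^2}\E^{b\, x\cdot x}$.

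\textbf{Step 2: truncation into SOS.} Expand each such Gaussian via its Taylor series. The exponential $\E^{s\,x\cdot y}$ is positive semidefinite as a kernel, and its truncations $\sum_{n\le N}(s\,x\cdot y)^n/n!$ are sums of squares in $x$ after integration against a positive measure. Since $q\ge 0$ and we integrate over $y$, the truncated degree-$N$ approximation $Q_N$ of $\IOp_r q$ lies in $\SOS{\RR[x_1,\dots,x_d]}$. This is where the $(n!)^t$ weights enter: with $r$ tied to $N$, the truncation error is controlled by a tail dominated by $\sum_n (x\cdot x)^n/(n!)^t$.

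\textbf{Step 3: error estimates.} Two errors have to be absorbed into $\Tail_N$.

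The first is $q - \IOp_r q = \sum_\alpha (1-r^{\abs{\alpha}})\,\coefficient{q}{\alpha} H_\alpha$. Its size is at most $(1-r)\cdot 2M\cdot\norm{q}$ times a bound on the Hermite polynomials of degree $\le 2M$ by $(x\cdot x)^n$ terms. I expect this to produce the constant $\mu$, since it involves the binomial count of monomials and $2^{M}$. Writing the difference as an explicit SOS plus a multiple of $\Tail_{M}$ requires choosing $1-r \approx 1/(4\mu\norm{q})^{\ldots}$.

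The second is the Taylor remainder $\IOp_r q - Q_N$. It must be dominated by $\sum_{n\le N}(x\cdot x)^n/(n!)^t$ on the coefficient level, so that the difference is SOS: coefficient-wise domination by even monomial squares suffices. The coefficients behave like $(\text{const}/(1-r))^n/n!$, and the requirement $(c/(1-r))^n/n! \le 1/(n!)^t$ for $n$ near $N$ gives $N \gtrsim \E\,(c/(1-r))^{1/(1-t)}$, using Stirling. This yields the leading term $2\E(4\mu\norm{q})^{1/(1-t)}$. The logarithmic term absorbs the polynomial prefactors $(d+1)^2 2^{2d}\norm{q}/\mu$. The $\lfloor 3d/2\rfloor$ and $2M+\lceil 5d/2\rceil+2$ terms come from dimension-dependent Gaussian moment constants and from requiring $N$ to exceed $\deg q$.

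\textbf{Main obstacle.} The hardest part is Step 3: making the Hermite-to-monomial comparison and the Taylor remainder bound fully explicit, so that both errors are simultaneously dominated by a single SOS tail $\Tail_N$, with $r$ chosen to balance the two error terms.
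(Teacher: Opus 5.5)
\textbf{There is a genuine gap in Step~2.} Your architecture matches the paper's: Mehler smoothing, positivity of the kernel, an SOS-preserving truncation, a diagonal Hermite error absorbed by low powers of $(x\cdot x)$, and a truncation error absorbed by high powers via Stirling. But Step~2, the heart of the proof, fails as stated.

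Since $\IOp_r$ is diagonal on the Hermite basis, $\IOp_r q=\sum_{\abs\alpha\le 2M} r^{\abs\alpha}H_\alpha\skal{H_\alpha}{q}_\infty/(2^{\abs\alpha}\alpha!)$ is already a polynomial of degree at most $2M$. So for $N\ge 2M$ its ``degree-$N$ truncation'' is $\IOp_r q$ itself. The same holds for the integral of the degree-$N$ Taylor truncation in $x$ of the kernel, because the coefficients of $x^\gamma$ integrate to those of $\IOp_r q$. That polynomial is pointwise nonnegative but in general not SOS in exactly the regime you need. Indeed, $\IOp_r\Motzkin\to\Motzkin$ in $\RR[x_1,\dots,x_d]_6$ as $r\to1$, the SOS cone there is closed, and $\Motzkin$ lies outside it. Hence there is no ``Taylor remainder'' $\IOp_r q-Q_N$ to estimate.

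Your justification also does not hold up, for three reasons. First, positive semidefiniteness of $(x,y)\mapsto\E^{s\,x\cdot y}$ concerns double integrals $\iint k(x,y)f(x)f(y)$, not whether $x\mapsto\int k(x,y)q(y)\,\D\mu(y)$ is SOS. Already $x\cdot y$ is a PSD kernel, yet $\int(x\cdot y)q(y)\,\D\mu(y)$ is linear in $x$, hence not SOS unless it vanishes. Second, odd-order truncations of $\E^{s\,x\cdot y}$ take negative values in $x$ for $y\neq0$. Third, you never say how the remaining factor $\E^{-a\,x\cdot x}$ becomes a polynomial while staying SOS. Naive Taylor truncation fails: the degree-$2$ truncation of $\E^{-x_1^2}$ is $1-x_1^2$.

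\textbf{The missing idea} is to truncate the exponential series \emph{in the exponent} at even order. The polynomial $\exp_N(z)=\sum_{n\le 2N}z^n/n!$ is strictly positive on $\RR$, so $\exp_N\bigl(-\frac{\lambda^2(x\cdot x)-2\lambda(x\cdot\xi)}{1-\lambda^2}\bigr)$ is SOS in $x$ for every $\xi\in\RR^d$. The factor $\exp\bigl(-\frac{\lambda^2(\xi\cdot\xi)}{1-\lambda^2}\bigr)$ is kept untruncated, so that integration against $p(\xi)\E^{-\xi\cdot\xi}$ converges. Closedness of the SOS cone in $\RR[x_1,\dots,x_d]_{4N}$ then makes $\IOp_{N,\lambda,\infty}(p)$ SOS.

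Its coefficients of degree $\le 2N$ agree with those of the full Mehler kernel. So for $N\ge M$ its low-degree part is exactly the Mehler smoothing of $p$; over all of $\RR^d$ this exchange of sum and integral needs the uniform estimate of Proposition~\ref{proposition:rad2infty}. The error is therefore a \emph{surplus} of terms of degree $2N<\abs\alpha\le 4N$, not a remainder. Non-even monomials must first be split into even ones as in Lemma~\ref{lemma:makeXsquareAgain}; plain coefficient-wise comparison is not enough. The surplus is then bounded by the block $\sum_{m=N-\lfloor d/2\rfloor}^{2N+\lceil d/2\rceil}(x\cdot x)^m/(m!)^t$. This block must be disjoint from the block $m\le M+\lfloor d/2\rfloor$ that absorbs the Hermite error.

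This is why the perturbation has to reach degree about $2N$, and it is where the factor $2$ in $2\E(4\mu\norm{p/\epsilon})^{1/(1-t)}$ comes from; your Step~3 gives no account of it. Also, the first error involves the Hermite coefficients $\skal{H_\alpha}{q}_\infty/(2^{\abs\alpha}\alpha!)$, not $\coefficient{q}{\alpha}$.
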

Clearly $\Nexpl(p/\epsilon) \sim \norm[\big]{p/\epsilon}^{\frac{1}{1-t}}$ when $\epsilon \to 0$,
so $\Nexpl(p/\epsilon)$ has polynomial growth for perturbation terms with a $1/(n!)^t$-weight, $t \in {[0,1[}$.
Theorem~\ref{theorem:Lasserre} shows that one is also guaranteed to eventually obtain
a sum of squares by adding a $1/(n!)^t$-weighted perturbation with $t \in {[1,2]}$, but in this case
we cannot provide an estimate for the (presumably faster) growth of the required degree of the perturbation.
We also do not know whether $t=2$ is the maximal number for which adding a $1/(n!)^t$-weighted perturbation
guarantees a decomposition into a sum of squares. However, as the intersection of
$\SOS{\RR[x_1,\dots,x_d]}$ with any finite-dimensional linear subspace of $\RR[x_1,\dots,x_d]$ is closed
(see e.g.~\cite[Cor.~3.34]{laurent}) and as the Motzkin polynomial $\Motzkin$ is not a sum of squares, one can iteratively
construct a sequence $(\epsilon_n)_{n\in \NN_0}$ in ${]0,\infty[}$ such that
\begin{equation}
  \Motzkin + \sum_{n=0}^{\smash N} \epsilon_n \Bigl(\sum\nolimits_{j=1}^d x_j^2 \Bigr)^n \notin \SOS{\RR[x_1,\dots,x_d]}
  \quad\quad\text{for all $N\in \NN_0$.}
\end{equation}

The proof of Theorem~\ref{theorem:effective} is given in Section~\ref{section:quantitative}.
Our technique is inspired by the polynomial kernel method 
developed first by Fang and Fawzi \cite{fawzi} (see
also the survey \cite{OverviewConvergenceRates})
with some necessary adaptations to this non-compact case.
Specifically, we exploit the properties of an integration kernel constructed from Hermite polynomials that we use in order
to construct a sum-of-squares approximation of any pointwise positive polynomial, and then we estimate the approximation error. 

\subsection{Related works}

\subsubsection{Lasserre's perturbative Positivstellensatz}

The perturbative Positivstellensatz \cite{lasserre} by Lasserre has been generalized to other spaces of positive polynomials, by deriving adequate analogs of Carleman's condition. 
The first noncommutative analog is derived in \cite[Thm.~1]{navascues2013paradox}: the authors prove that for any element of a Weyl algebra which is nonnegative in the Schr\"odinger representation there exists another arbitrarily close element that admits a sum-of-Hermitian-squares decomposition.
Then in \cite[Thm.~B]{klep2023globally}, the authors prove that every trace-positive noncommutative polynomial admits an explicit approximation in the 1-norm on its coefficients by sums of hermitian squares and commutators of noncommutative polynomials.
Eventually, it was shown in \cite[Thm.~6.9]{klep2025sums} that moment polynomials (which are polynomial expressions in commuting variables and their formal mixed moments) that are positive on actual probability measures are sums of squares and formal moments of squares up to arbitrarily small perturbation of their coefficients.

The perturbative result not only has theoretical significance but also practical implications for some observed numerical inaccuracies in computational results. Specifically, incorrect outcomes, stemming from numerical inaccuracies, have been reported across diverse applications of semidefinite programming solvers. This issue has been particularly notable in uses of Lasserre's hierarchy for solving polynomial optimization problems (see, e.g., \cite{waki2012generate}).
An intuitive, though informal, mathematical explanation for these inaccuracies was initially proposed in \cite{lasserre} and \cite{navascues2013paradox}. 
Later, \cite{lasserre2019sdp} systematically clarified this behavior by interpreting the solver's output as a form of robust optimization, an inherent and consistent feature of its operation.
In \cite{magron2021exact}, the authors introduced a hybrid numeric-symbolic algorithm designed to compute exact sum-of-squares certificates for polynomials within the interior of the sum-of-squares cone. This approach leverages semidefinite programming solvers to obtain an approximate decomposition, by incorporating deliberate perturbations to the input polynomial's coefficients. 

\subsubsection{Degree bounds for Positivstellensätze and Lasserre's moment-SOS hierarchy}

While our present study focuses on polynomials being \emph{globally} nonnegative on $\RR^d$, there has been substantial research dedicated to analyze the asymptotic behavior of two important hierarchies \cite{lasserre2001global} in polynomial optimization on (mostly) \emph{compact} constraint sets, based on Schm\"udgen's Positivstellensatz \cite{Schmudgen1991} and Putinar's Positivstellensatz
\cite{putinar1993positive}.
Let us assume that $p \in \RR[x_1,\dots,x_d]$ is positive over a given basic compact semialgebraic set
$\set[\big]{\xi \in \RR^d}{ g_1(\xi) \geq 0, \dots, g_m(\xi) \geq 0 }$, where each $g_j \in \RR[x_1,\dots,x_d]$.
Then Schm\"udgen's Positivstellensatz \cite{Schmudgen1991} states that there exists $N \in \NN_0$ such that
$p$ lies in the truncated preordering $\mathcal{T}_{N}$ defined by
\begin{align*}
\label{eq:preorder}
\mathcal{T}_{N}
&\coloneqq
\set[\Bigg]{
  \sum_{J \subseteq \{1,\dots,m\}} \sigma_J g_J
}{
  \begin{array}{l}
    m\in \NN_0\text{ and }
    \sigma_J \in \SOS{\RR[x_1,\dots,x_d]} \text{ for }J \subseteq \{1,\dots,m\} \\
    \text{such that }\deg (\sigma_J g_J) \leq 2N
  \end{array}
}
,
\end{align*} 
where $g_J \coloneqq \prod_{j \in J} g_j$ for  $J \subseteq \{1,\dots,m\}$.
Under the additional assumption that the feasible set includes a ball constraint in its description, Putinar's Positivstellensatz \cite{putinar1993positive} states that there exists $N \in \NN_0$ such that $p$ lies in the truncated quadratic module $\mathcal{Q}_{N}$ defined by 
\begin{align*}
\label{eq:quadmodule}
\mathcal{Q}_{N}
&\coloneqq
\set[\Bigg]{\sum_{j=0}^m \sigma_j g_j}{
  \begin{array}{l}
    m\in \NN_0\text{ and }
    \sigma_j \in \SOS{\RR[x_1,\dots,x_d]} \text{ for }j \in \{1,\dots,m\} \\
    \text{such that }\deg (\sigma_j g_j) \leq 2N
  \end{array}
}
.
\end{align*} 
For a general compact basic  semialgebraic set, Schm\"udgen-type hierarchies were shown in \cite{schweighofer2004complexity} to certify the nonnegativity of $p + \varepsilon$ at degree bounds of  $N=O(\varepsilon^{-c})$, where $c$ is a positive constant that depends on the set. 
Initially, Putinar-type hierarchies were known from \cite{nie2007complexity} to certify nonnegativity more slowly, with degree bounds of $N=O(\exp(\varepsilon^{-c}))$. 
Recent work in \cite{baldi2023effective} significantly improved this latter result, demonstrating that Putinar-type hierarchies also certify nonnegativity at $N=O(\varepsilon^{-c})$, matching the performance of Schm\"udgen-type hierarchies.
For special cases of Schm\"udgen-type hierarchies, including the unit sphere \cite{fawzi}, the unit ball and standard simplex \cite{slot2022sum}, and the hypercube \cite{laurent2023effective}, it has been established that one can select $c = 1/2$. 
Similar degree bounds have also been obtained in \cite{rateprod} for feasible sets that are Cartesian products of such sets (unit sphere, unit ball, or standard simplex). 
The binary hypercube $\{0,1\}^d$ is addressed separately in \cite{slot2023sum}, while degree bounds of $O(1/\varepsilon)$ for Putinar-type hierarchies over the hypercube were recently established in \cite{baldi2024degree}.
In the context of polynomial optimization with correlative sparsity, better degree bounds have been derived in \cite{korda2025convergence}, assuming sufficiently sparse input data. 
For \emph{non-compact} feasible sets one can derive Putinar-type hierarchies based on sums of squares of
\emph{rational functions} with uniform denominators. In this case, similar degree bounds of
$N=O(\varepsilon^{-c})$ have been found in \cite{mai2022complexity}.
For a detailed overview of these results, including the tightness of performance analysis, exponential convergence under local optimality conditions, and finite convergence, we refer to the recent survey \cite{OverviewConvergenceRates}.\looseness=-1

\subsection{Acknowledgments} 
This work has been supported by European Union’s HORIZON–MSCA-2023-DN-JD programme under the Horizon Europe (HORIZON) Marie Skłodowska-Curie Actions, grant agreement 101120296 (TENORS), the project COMPUTE, funded within the QuantERA II Programme that has received funding from the EU's H2020 research and innovation programme under the GA No 101017733 {\normalsize\euflag}.
IK also acknowledges support of the Slovenian Research Agency program P1-0222 and grants J1-50002, N1-0217, J1-60011, J1-50001, J1-3004 and J1-60025. 
Partially supported by the Fondation de l’\'Ecole polytechnique
as part of the Gaspard Monge Visiting Professor Program. IK and MS thank \'Ecole polytechnique and Inria
for hospitality during the preparation of this manuscript.

\section{General facts and notation} \label{sec:notation}
We will use standard multi-index notation: For $\alpha \in \NN_0^d$ we write
$\alpha! \coloneqq \prod_{j=1}^d \alpha_j !$ and $\abs{\alpha} \coloneqq \sum_{j=1}^d \alpha_j$,
and then $x^\alpha \coloneqq \prod_{j=1}^d x_j^{\alpha_j}$. Multi-indices can be compared pointwise,
i.e., $\alpha \le \beta$ for $\alpha,\beta \in \NN_0^d$ means that $\alpha_j \le \beta_j$ for all $j\in\{1,\dots,d\}$.
In this case we also write $\binom{\beta}{\alpha} \coloneqq \prod_{j=1}^d \binom{\beta_j}{\alpha_j}$.
The floor and ceiling functions $\lfloor\argument \rfloor, \lceil\argument \rceil \colon \RR^d \to \ZZ^d$ are defined
entrywise.

We view polynomial algebras $\RR[\argument]$ as subalgebras of the corresponding algebras of formal power series $\RR[[\argument]]$.
For $p \in \RR[[x_1,\dots,x_d]]$ and $\alpha \in \NN_0^d$ we denote by $\coefficient{p}{\alpha} \in \RR$ the coefficient
of $x^\alpha$ so that $p = \sum_{\alpha\in \NN_0^d} \coefficient{p}{\alpha} \, x^\alpha$.
Let $D \in \NN_0$, then we write
\begin{equation}
  \RR[x_1,\dots,x_d]_{D}
  \coloneqq
  \set[\big]{p\in \RR[x_1,\dots,x_d]}{\coefficient{p}{\alpha} = 0\text{ for all $\alpha \in \NN_0^d$ with $\abs{\alpha} > D$}}
\end{equation}
for the space of polynomials in $x_1,\dots,x_d$ of degree at most $D$.

The convex cone $\SOS{\RR[x_1,\dots,x_d]}$ induces a partial order $\preccurlyeq$ on $\RR[x_1,\dots,x_d]$,
namely, let $p,q \in \RR[x_1,\dots,x_d]$, then\!
\begin{equation}
  p \preccurlyeq q\quad\quad\text{if and only if}\quad\quad q-p \in \SOS{\RR[x_1,\dots,x_d]}.
\end{equation}
Clearly $p \preccurlyeq q$ implies $p(\xi) \le q(\xi)$ for all $\xi \in \RR^d$,
and in the univariate case (i.e., $d=1$) the converse is also true,
see e.g.~\cite[Prop.~3.1]{schmuedgen:TheMomentProblem}. Note that
\begin{equation}
  0\preccurlyeq p,
  \quad\quad
  pq \preccurlyeq pr,
  \quad\quad{and}\quad\quad
  q+s \preccurlyeq r+s
\end{equation}
for all $p \in \SOS{\RR[x_1,\dots,x_d]}$ and $q,r,s \in \RR[x_1,\dots,x_d]$ with $q \preccurlyeq r$.
Next, consider $p, p' \in \RR[x_1,\dots,x_d]$ and $q,q' \in \SOS{\RR[x_1,\dots,x_d]}$:
\begin{align}
  \label{eq:productOfBounds}
  \text{If}\quad{-q} \preccurlyeq p &\preccurlyeq q\quad\text{and}\quad{-q'} \preccurlyeq p' \preccurlyeq q',
  \quad\text{then}\quad{-qq'} \preccurlyeq pp' \preccurlyeq qq',
\shortintertext{because}
  qq'+pp' &= \frac{(q+p)(q'+p') + (q-p)(q'-p')}{2} \in \SOS{\RR[x_1,\dots,x_d]} \nonumber
\shortintertext{and}
  qq'-pp' &= \frac{(q+p)(q'-p') + (q-p)(q'+p')}{2} \in \SOS{\RR[x_1,\dots,x_d]} \nonumber
  .
\end{align}
Moreover, the following estimate will also be helpful:
\begin{equation}
  \label{eq:pseudoCS}
  - \frac12\biggl( \lambda p^2 + \frac{q^2}{\lambda} \biggr)
  \preccurlyeq
  pq
  \preccurlyeq
  \frac12\biggl( \lambda p^2 + \frac{q^2}{\lambda} \biggr)
  \quad\quad\text{for any $p,q \in \RR[x_1,\dots,x_d]$ and $\lambda \in {]0,\infty[}$,}
\end{equation}
this follows from expanding
$\bigl(\lambda^{1/2} p \pm \lambda^{-1/2} q \bigr)^2 \in \SOS{\RR[x_1,\dots,x_d]}$.

\section{A separation argument} \label{section:easyProof}
The goal of this section is to give a short proof of Theorem~\ref{theorem:Lasserre}.
For $a,b \in \RR[x_1,\dots,x_d]$ we define the \emph{order interval}
$[a,b] \coloneqq \set[\big]{p\in \RR[x_1,\dots,x_d]}{a \preccurlyeq p \preccurlyeq b}$,
which is clearly a convex subset of $\RR[x_1,\dots,x_d]$.
Moreover, $[-a,a]$ is balanced for all $a\in \RR[x_1,\dots,x_d]$, i.e., $-p \in [-a,a]$ for all $p\in [-a,a]$.
We also define
\begin{equation}
  U
  \coloneqq
  \bigcup_{N\in \NN_0}
  \biggl[
    \,- \sum_{n=0}^N \sum_{j=1}^d \frac{x_j^{2n}}{(2n)!}\,{ , }\,\sum_{n=0}^N \sum_{j=1}^d \frac{x_j^{2n}}{(2n)!}\,
  \biggr]
  \subseteq
  \RR[x_1,\dots,x_d]
  .
\end{equation}
Being a union of an increasing sequence of balanced convex subsets of $\RR[x_1,\dots,x_d]$, $U$ is again balanced and convex.

\begin{lemma} \label{lemma:absorbing}
  The subset $U$ of $\RR[x_1,\dots,x_d]$ is absorbing, i.e., for every $p \in \RR[x_1,\dots,x_d]$
  there is $\lambda \in {[0,\infty[}$ such that $p \in \lambda U$.
\end{lemma}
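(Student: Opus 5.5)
Since $U$ is convex and balanced, it suffices to show that each monomial $x^\alpha$ lies in $\lambda_\alpha U$ for some $\lambda_\alpha\ge0$. Indeed, if $p=\sum_\alpha c_\alpha x^\alpha$ is a finite sum, then $p\in\lambda U$ with $\lambda\coloneqq\sum_\alpha \abs{c_\alpha}\lambda_\alpha$. This holds because $U$ is convex and balanced, so $\sum_\alpha \mu_\alpha u_\alpha\in U$ whenever $\sum_\alpha\abs{\mu_\alpha}\le 1$ and $u_\alpha\in U$. It is also harmless to note that $0\in U$ covers the case $\lambda=0$. Moreover, the order intervals are increasing in $N$, because every added term $x_j^{2n}/(2n)!$ is a square. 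Hence we may always enlarge $N$ at the end, and it suffices to find, for each $\alpha$, some $N$ and some $c>0$ with
\[
  -c\sum_{n=0}^N\sum_{j=1}^d \frac{x_j^{2n}}{(2n)!}\preccurlyeq x^\alpha\preccurlyeq c\sum_{n=0}^N\sum_{j=1}^d \frac{x_j^{2n}}{(2n)!}.
\]

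The key step is a two-sided bound for univariate monomials. For $k\in\NN_0$ we have
\[
  -\tfrac12\bigl(x_j^{2k}+1\bigr)\preccurlyeq x_j^{k}\preccurlyeq \tfrac12\bigl(x_j^{2k}+1\bigr)
\]
by \eqref{eq:pseudoCS} with $p=x_j^k$, $q=1$ and $\lambda=1$. For even $k$ the bound $0\preccurlyeq x_j^k$ is trivial anyway. Hence each factor satisfies $-q_j\preccurlyeq x_j^{\alpha_j}\preccurlyeq q_j$, where $q_j\coloneqq 1+x_j^{2\alpha_j}$ is a sum of squares.

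Applying \eqref{eq:productOfBounds} iteratively over $j=1,\dots,d$ then gives $-Q\preccurlyeq x^\alpha\preccurlyeq Q$ with $Q\coloneqq\prod_j(1+x_j^{2\alpha_j})$. This, however, is a mixed polynomial, so we still need to dominate it by a sum of pure powers of individual variables. This is the main obstacle. I would handle it by expanding $Q=\sum_{S\subseteq\{1,\dots,d\}}\prod_{j\in S}x_j^{2\alpha_j}$ and bounding each term $\prod_{j\in S} x_j^{2\alpha_j}$ with the weighted AM–GM inequality in its SOS form.

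Concretely, I would avoid general AM–GM and use the pairwise estimate \eqref{eq:pseudoCS} repeatedly. Write $m\coloneqq\abs{\alpha}$ and pad to a power of two, $2^r\ge d$. For squares $a,b$ we have $ab\preccurlyeq\frac12(a^2+b^2)$, and for products of pairs one can combine this with \eqref{eq:productOfBounds}. A simpler route is the bound $x^{2\beta}\preccurlyeq\sum_j c_j x_j^{2\abs{\beta}}+C$, which follows by induction on $d$ from
\[
  x_1^{2a}y^{2b}\preccurlyeq \tfrac12\bigl(x_1^{4a}+y^{4b}\bigr)
\]
together with $x_1^{4a}\preccurlyeq 1+x_1^{4a'}$ for $a\le a'$. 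The latter holds since $x^{2k}\preccurlyeq 1+x^{2l}$ for $k\le l$, which is univariate and holds pointwise, hence is SOS in one variable. In this way $Q\preccurlyeq C\bigl(1+\sum_j x_j^{2K}\bigr)$ for some $K$ (for instance $K=2^d\abs{\alpha}$) and some constant $C$, where again the univariate inequalities are justified because nonnegative univariate polynomials are sums of squares.

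Finally, each $x_j^{2n}$ with $n\le K$ is bounded by $(2n)!$ times the corresponding summand, and $1\preccurlyeq d^{-1}\sum_{j=1}^d x_j^0/0!$. Hence $Q\preccurlyeq c\sum_{n=0}^K\sum_{j=1}^d x_j^{2n}/(2n)!$ for some $c>0$, which gives $x^\alpha\in cU$.
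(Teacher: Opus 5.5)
Your proof is correct, but it takes a different route from the paper's after the common reduction to monomials.

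The paper never writes down an explicit dominating polynomial. It sets $V \coloneqq \bigcup_{\lambda\ge0}\lambda U$, which is a linear subspace containing $1$ and all $x_j^{2n}$. It then observes that $V$ is closed under domination: if $-q\preccurlyeq p\preccurlyeq q$ with $q\in\SOS{\RR[x_1,\dots,x_d]}\cap V$, then $p\in V$. Finally it inducts on the number of variables occurring in $x^\alpha$. Writing $x^\alpha=x^\beta x_i^{\alpha_i}$, a single use of \eqref{eq:pseudoCS} gives $-\frac12(x^{2\beta}+x_i^{2\alpha_i})\preccurlyeq x^\alpha\preccurlyeq\frac12(x^{2\beta}+x_i^{2\alpha_i})$. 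The mixed square $x^{2\beta}$ already lies in $V$ by the induction hypothesis, simply because it is itself a monomial in fewer variables.

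So the step you call the main obstacle, dominating mixed monomials by pure powers, is absorbed by the induction hypothesis and never carried out. In particular, the paper needs only \eqref{eq:pseudoCS}, not the fact that nonnegative univariate polynomials are sums of squares. Your route is: first $-Q\preccurlyeq x^\alpha\preccurlyeq Q$ via \eqref{eq:productOfBounds}, then iterated pairwise splitting of the mixed monomials in $Q$, then the univariate comparison $x_j^{2k}\preccurlyeq 1+x_j^{2l}$. It involves more bookkeeping and that extra univariate ingredient. In exchange it gives an explicit $N$ and constant $c$, hence an explicit bound on $\norm{x^\alpha}_U$. The detour through $Q$ is not actually needed: splitting $x^\alpha$ pairwise from the start works, and is essentially the paper's induction unrolled.

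One small inaccuracy: the pairwise induction does not give $x^{2\beta}\preccurlyeq\sum_j c_j x_j^{2\abs{\beta}}+C$ with exponent exactly $2\abs{\beta}$. For $\beta=(1,1,1)$, for example, it yields $\frac12x_1^4+\frac14x_2^8+\frac14x_3^8$. The induction statement should say ``for some $K$'', with the exponents then raised to a common value via $x_j^{2k}\preccurlyeq 1+x_j^{2l}$. That is exactly what you do at the end, so the conclusion is unaffected.
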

\begin{proof}
  Write $V \coloneqq \bigcup_{\lambda \in {[0,\infty[}} \lambda U$, then we have to show that $V = \RR[x_1,\dots,x_d]$.
  As $U$ is balanced and convex, $V$ is a linear subspace of $\RR[x_1,\dots,x_d]$.
  Therefore it is sufficient to show that $x^\alpha \in V$ for all monomials $x^\alpha$ with $\alpha \in \NN_0^d$.
  It is already clear that $x_j^{2n} \in V$ for all $j \in \{1,\dots,d\}$ and $n\in \NN_0$.
  We also note the following consequence of the definition of $U$
  as a union of an increasing sequence of balanced order intervals:
  if $p \in \RR[x_1,\dots,x_d]$ and $q \in \SOS{\RR[x_1,\dots,x_d]} \cap V$ fulfill
  $-q \preccurlyeq p \preccurlyeq q$, then $p \in V$.

  Consider $\alpha \in \NN_0^d$ such that $\alpha_j = 0$ for all $j \in \{2, \dots, d\}$, then
  \begin{equation*}
    - \frac12\bigl( 1+x_1^{2\alpha_1} \bigr)
    \preccurlyeq
    x^\alpha
    \preccurlyeq
    \frac12\bigl( 1+x_1^{2\alpha_1} \bigr)
  \end{equation*}
  by \eqref{eq:pseudoCS},
  and therefore $x^\alpha \in V$ because $\frac12\bigl( 1+x_1^{2\alpha_1} \bigr) \in \SOS{\RR[x_1,\dots,x_d]} \cap V$.
  Now assume that there is $i \in \{2, \dots,d\}$ such that $x^\alpha \in V$
  for all $\alpha \in \NN_0^d$ that fulfill $\alpha_j = 0$ for all $j \in \{i,\dots,d\}$.
  Consider $\alpha \in \NN_0^d$ such that $\alpha_j = 0$ for all $j \in \{i+1,\dots,d\}$,
  and define $\beta \in \NN_0^d$ as $\beta_j \coloneqq \alpha_j$ for $j \in \{1,\dots,i-1\}$ and $\beta_j \coloneqq 0$ for $j \in \{i,\dots,d\}$,
  so $x^{2\beta} \in \SOS{\RR[x_1,\dots,x_d]} \cap V$. Then $x^\alpha = x^\beta x_i^{\alpha_i}$ and
  \begin{equation*}
    - \frac12 \bigl( x^{2\beta} + x_i^{2\alpha_i} \bigr)
    \preccurlyeq
    x^\alpha
    \preccurlyeq
    \frac12 \bigl( x^{2\beta} + x_i^{2\alpha_i} \bigr)
  \end{equation*}
  by \eqref{eq:pseudoCS}.
  This shows that $x^\alpha \in V$ because $\frac12\bigl( x^{2\beta} + x_i^{2\alpha_i} \bigr) \in \SOS{\RR[x_1,\dots,x_d]} \cap V$.
  By induction it follows that $x^\alpha \in V$ for all $\alpha \in \NN_0^d$.
\end{proof}
To summarize the discussion so far, $U$ is an absorbing balanced convex subset of $\RR[x_1,\dots,x_d]$.
Its Minkowski functional $\norm{\argument}_U \colon \RR[x_1,\dots,x_d] \to {[0,\infty[}$,
\begin{equation}
  p \mapsto \norm{p}_U \coloneqq \inf \set[\big]{\lambda \in {[0,\infty[}}{ p \in \lambda U}
\end{equation}
therefore is a well-defined seminorm on $\RR[x_1,\dots,x_d]$.

\begin{remark}
  The Minkowski functional $\norm{\argument}_U$ is actually a norm on $\RR[x_1,\dots,x_d]$: if $p \in \RR[x_1,\dots,x_d]$
  fulfills $\norm{p}_U = 0$, then $p \in \epsilon U$ for all $\epsilon \in {]0,\infty[}$, which means that there are $N(\epsilon) \in \NN_0$ such that
  \begin{equation*}
    - \epsilon \sum_{n=0}^{N(\epsilon)} \sum_{j=1}^d \frac{x_j^{2n}}{(2n)!}
    \preccurlyeq p \preccurlyeq
    \epsilon \sum_{n=0}^{N(\epsilon)} \sum_{j=1}^d \frac{x_j^{2n}}{(2n)!}
    \,,\quad\text{so}\quad
    \abs{p(\xi)} \le \epsilon \sum_{n=0}^{N(\epsilon)} \sum_{j=1}^d \frac{\xi_j^{2n}}{(2n)!} \le \epsilon \sum_{j=1}^d \cosh(\xi_j)
    \quad\text{for }\xi\in\RR^d.
  \end{equation*}
  As this is true for all $\epsilon \in {]0,\infty[}$ and all $\xi \in \RR^d$, it follows that $p=0$.
  However, for the proof of Theorem~\ref{theorem:Lasserre}, this fact that $\norm{\argument}_U$ is a norm will not be relevant.
\end{remark}
We say that a linear functional $\omega \colon \RR[x_1,\dots,x_d] \to \RR$ is \emph{positive} if $\omega(p^2) \ge 0$
for all $p\in \RR[x_1,\dots,x_d]$, or equivalently, $\omega(q) \ge 0$ for all $q\in \SOS{\RR[x_1,\dots,x_d]}$.
A fundamental result by Nussbaum from the theory of moments asserts: if a positive linear functional $\omega$ on $\RR[x_1,\dots,x_d]$
fulfills the \emph{multivariate Carleman condition}
\begin{equation}
  \label{eq:carleman}
  \sum_{n=1}^\infty \frac{1}{ \omega\bigl(x_j^{2n}\bigr)^{1/(2n)}} = \infty
  \quad\quad\textup{for all }j\in\{1,\dots,d\},
\end{equation}
where $1/0 \coloneqq \infty$, then there exists a unique positive Radon measure $\mu_\omega$ on $\RR^d$ such that
\begin{equation}
  \label{eq:integral}
  \omega(p) = \int_{\RR^d} p(\xi)\,\D\mu_\omega(\xi)
  \quad\quad\textup{for all }p\in\RR[x_1,\dots,x_d].
\end{equation}
For details we refer to e.g.~the textbook \cite[Thm.~14.19]{schmuedgen:TheMomentProblem}.

\begin{proof}[of Theorem~\ref{theorem:Lasserre}]
  Let $p \in \RR[x_1,\dots,x_d]$. First assume that $\epsilon \in {]0,\infty[}$ and $N \in \NN_0$ fulfill
  $p+\epsilon \sum_{n=0}^N \sum_{j=1}^d x_j^{2n} / (2n)! \in \SOS{\RR[x_1,\dots,x_d]}$. Then
  \begin{eqnarray*}
    0
    \preccurlyeq
    p+\epsilon \sum_{n=0}^{\smash N} \sum_{j=1}^{\smash d} \frac{x_j^{2n}}{(2n)!}
    \preccurlyeq
    p+\epsilon \sum_{n=0}^{\smash N} \sum_{j=1}^{\smash d} \frac{x_j^{2n}}{n!}
  \end{eqnarray*}
  holds,
  which proves the implication \ref{item:Lasserre:perturbationStronger}~$\Rightarrow$~\ref{item:Lasserre:perturbation}.

  Next assume that for every $\epsilon \in {]0,\infty[}$ there is $N \in \NN_0$ such that
  $p+\epsilon \sum_{n=0}^N \sum_{j=1}^d x_j^{2n} / n! \in \SOS{\RR[x_1,\dots,x_d]}$. Consider any
  $\xi \in \RR^d$. Then
  \begin{equation*}
    0
    \le
    p(\xi)+\epsilon \sum_{n=0}^{\smash{N(\epsilon)}} \sum_{j=1}^{\smash d} \frac{\xi_j^{2n}}{n!}
    \le
    p(\xi)+\epsilon \sum_{j=1}^d \exp(\xi_j^2)
  \end{equation*}
  holds for all $\epsilon \in {]0,\infty[}$, with suitable $N(\epsilon) \in \NN_0$ depending on $\epsilon$,
  and therefore $0 \le p(\xi)$.
  This proves the implication \ref{item:Lasserre:perturbation}~$\Rightarrow$~\ref{item:Lasserre:pos}.

  Finally, assume there is $\epsilon \in {]0,\infty[}$ such that
  $p+\epsilon \sum_{n=0}^N \sum_{j=1}^d x_j^{2n}/ (2n)! \notin \SOS{\RR[x_1,\dots,x_d]}$ for all $N \in \NN_0$.
  Note that this assumption implies $(p+\epsilon U) \cap \SOS{\RR[x_1,\dots,x_d]} = \emptyset$:
  indeed, if there would exist $q \in (p+\epsilon U) \cap \SOS{\RR[x_1,\dots,x_d]}$, then $q-p \in \epsilon U$,
  so in particular there would exist $N \in \NN_0$ such that $q-p \preccurlyeq \epsilon \sum_{n=0}^N \sum_{j=1}^d x_j^{2n}/ (2n)!$,
  and therefore $0 \preccurlyeq q \preccurlyeq p + \epsilon \sum_{n=0}^N \sum_{j=1}^d x_j^{2n}/ (2n)!$, a contradiction.
  In order to prove the remaining implication \ref{item:Lasserre:pos}~$\Rightarrow$~\ref{item:Lasserre:perturbationStronger}
  we will show that there is $\xi \in \RR^d$ such that $p(\xi) < 0$. By definition of the (semi-)norm $\norm{\argument}_U$,
  the set $U$ contains the open $1$-ball of $\norm{\argument}_U$, so $p$ is not contained in the $\norm{\argument}_U$-closure
  of $\SOS{\RR[x_1,\dots,x_d]}$ because $(p+\epsilon U) \cap \SOS{\RR[x_1,\dots,x_d]} = \emptyset$.
  By the Hahn--Banach theorem \cite[Cor.~3.3.9]{davidson},
  there exists a $\norm{\argument}_U$-bounded linear functional
  $\omega \colon \RR[x_1,\dots,x_d] \to \RR$ such that $\omega(q) > \omega(p)$ for all $q \in \SOS{\RR[x_1,\dots,x_d]}$.
  From $0 \in \SOS{\RR[x_1,\dots,x_d]}$ it follows that $\omega(p)  < 0$, and consequently $\omega(q) \ge 0$ for all
  $q \in \SOS{\RR[x_1,\dots,x_d]}$ because $\SOS{\RR[x_1,\dots,x_d]}$ is closed under multiplication with scalars $\lambda \in {[0,\infty[}$.
  So $\omega$ is a positive linear functional on $\RR[x_1,\dots,x_d]$ such that $\omega(p) < 0$.
  As $\omega$ is $\norm{\argument}_U$-bounded, there is $\mu \in {]0,\infty[}$ such that $\abs{\omega(u)} \le \mu$
  for all $u \in U$. In particular $\sum_{n=0}^N \sum_{j=1}^d \omega(x_j^{2n}) / (2n)! \le \mu$ for all $N\in \NN_0$,
  and therefore $\omega(x_j^{2n}) \le \mu (2n)!$ for all $n\in \NN_0$ and $j\in \{1,\dots,d\}$.
  It follows that $\omega$ fulfills the multivariate Carleman condition \eqref{eq:carleman}:
  \begin{equation*}
    \sum_{n=1}^\infty \frac{1}{\omega\bigl( x_j^{2n} \bigr)^{1/(2n)}}
    \ge
    \sum_{n=1}^\infty \frac{1}{\bigl( \mu (2n)! \bigr)^{1/(2n)}}
    \ge
    \sum_{n=1}^\infty \frac{1}{\mu^{1/(2n)} 2n }
    \ge
    \frac{1}{2 \max \{1, \sqrt{\mu}\}}
    \sum_{n=1}^\infty \frac{1}{ n }
    =
    \infty
  \end{equation*}
  for all $j\in \{1,\dots,d\}$.
  Therefore there is a unique positive Radon measure $\mu_\omega$ on $\RR^d$ that represents the positive functional $\omega$
  as in \eqref{eq:integral}. Consequently, as $\omega(p) < 0$, there necessarily exists a point $\xi \in \RR^d$
  such that $p(\xi) < 0$.
\end{proof}

\begin{remark}
  As an application of Theorem~\ref{theorem:Lasserre} we
  show that $\norm{\argument}_U$ is a weighted $L^\infty$-norm:
  \begin{equation}
    \label{eq:normInterpretation}
    \norm{p}_U = \sup_{\xi \in \RR^d} \frac{\abs{p(\xi)}}{\sum_{j=1}^d \cosh(\xi_j) }
    \quad\quad\text{for all $p\in\RR[x_1,\dots,x_d]$.}
  \end{equation}
  To check that this is true it suffices to show that the open unit ball of the norm $\norm{\argument}_U$
  on the left-hand side of \eqref{eq:normInterpretation} is contained in the closed unit ball of the norm
  defined by the right-hand side of \eqref{eq:normInterpretation}, and vice versa.
  First assume $p \in \RR[x_1,\dots,x_d]$ fulfills $\norm{p}_U < 1$. Then $p\in U$
  and consequently there is $N\in \NN_0$ such that
  \begin{equation*}
    \abs[\big]{p(\xi)}
    \le
    \sum_{n=0}^N \sum_{j=1}^d \frac{\xi_j^{2n}}{(2n)!}
    \le
    \sum_{j=1}^d \cosh(\xi_j)
  \end{equation*}
  for all $\xi \in \RR^d$. Conversely, assume $p \in \RR[x_1,\dots,x_d]$ and $\epsilon \in {]0,1[}$ fulfill
  $\abs{p(\xi)} \le (1-\epsilon) \sum_{j=1}^d \cosh(\xi_j)$ for all $\xi \in \RR^d$. We show that this implies $p\in U$,
  hence $\norm{p}_U\le 1$: For every $N\in \NN_0$ write
  \begin{eqnarray*}
    P(N) \coloneqq \set[\Big]{\xi \in \RR^d}{ \abs{p(\xi)} \ge (1-\epsilon/2) \sum\nolimits_{n=0}^N \sum\nolimits_{j=1}^d \xi_j^{2n} / (2n)! }
    .
  \end{eqnarray*}
  For $N' \in \NN_0$ strictly greater than half the degree of $p$, the set $P(N')$ is compact.
  Moreover, for every $\xi \in \RR^d$ there is $N\in \NN_0$ such that $(1-\epsilon) \sum_{j=1}^d \cosh(\xi_j) < (1-\epsilon/2) \sum\nolimits_{n=0}^N \sum\nolimits_{j=1}^d \xi_j^{2n} / (2n)!$, and therefore
  $\abs{p(\xi)} < (1-\epsilon/2) \sum\nolimits_{n=0}^N \sum\nolimits_{j=1}^d \xi_j^{2n} / (2n)!$,
  i.e., $\xi \notin P(N)$. This shows that $\bigcap_{N\in \NN_0} P(N) = \emptyset$,
  and therefore the complements of all the closed sets $P(N)$ with $N\in \NN_0$ cover $P(N')$.
  By compactness of $P(N')$ and as $P(N) \supseteq P(N+1)$ for all $N\in \NN_0$,
  there is $N_0\in \NN_0$ such that $\RR^d \setminus P(N_0) \supseteq P(N')$,
  and without loss of generality we can assume that $N_0 \ge N'$. It follows that
  $P(N_0) \cap P(N') = \emptyset$ and $P(N_0) \subseteq P(N')$, so $P(N_0) = \emptyset$.
  This means
  \begin{equation*}
    \abs[\big]{p(\xi)}
    <
    \biggl(1-\frac\epsilon2\biggr) \sum_{n=0}^{\smash{N_0}} \sum_{j=1}^{\smash d} \frac{\xi_j^{2n}}{ (2n)! }
    \quad\quad
    \text{for all $\xi \in \RR^d$.}
  \end{equation*}
  Applying Theorem~\ref{theorem:Lasserre} to the two pointwise (strictly) positive polynomials
  \begin{equation*}
    q_\pm
    \coloneqq
    \biggl(1-\frac\epsilon2\biggr) \sum_{n=0}^{\smash{N_0}} \sum_{j=1}^{\smash d} \frac{x_j^{2n}}{ (2n)! } \pm p
    \in
    \RR[x_1,\dots,x_d]
  \end{equation*}
  shows that there are two numbers $N_\pm \in \NN_0$ such that
  \begin{equation*}
    q_\pm + \frac\epsilon2 \sum_{n=0}^{\smash{N_\pm}} \sum_{j=1}^{\smash d} \frac{x_j^{2n}}{ (2n)! } \in \SOS{\RR[x_1,\dots,x_d]}
    .
  \end{equation*}
  For $N_{\max} \coloneqq \max\{N_0,N_+,N_-\}$ it follows that
  \begin{equation*}
    - \sum_{n=0}^{\smash {N_{\max}}} \sum_{j=1}^{\smash d} \frac{x_j^{2n}}{ (2n)! }
    \preccurlyeq
    p
    \preccurlyeq
    \sum_{n=0}^{\smash {N_{\max}}} \sum_{j=1}^{\smash d} \frac{x_j^{2n}}{ (2n)! }
  \end{equation*}
  and therefore $p \in U$.
\end{remark}

\section{A positive integration kernel} \label{section:quantitative}
In this section we will derive the explicit estimates of Theorem~\ref{theorem:effective} for the degree $\Nexpl$ of the
perturbation. Our approach relies on Hermite polynomials in $d$ variables $x_1, \dots, x_d$,
see e.g.~\cite{szego:ortoPoly}, \cite{rainville}, or \cite{askey}.

\subsection{Hermite polynomials and the truncated Mehler kernel}
One way to define the Hermite polynomials is via the explicit formula
\begin{align}
  \label{eq:HermiteExplicit1D}
  &&
  H_n
  \coloneqq{}&
  n! \sum_{k=0}^{\smash{\lfloor n/2 \rfloor}} \frac{(-1)^k (2z)^{n-2k}}{k!\,(n-2k)!} \in \RR[z]
  &&\text{for $n\in \NN_0$,}
\shortintertext{and then}
  \label{eq:HermiteExplicitMultiD}
  &&H_\alpha
  \coloneqq
  \prod_{j=1}^d H_{\alpha_j}(x_j)
  ={}&
  \alpha! \sum_{\substack{\beta \in \NN_0^d \\ 2\beta \le \alpha}} \frac{(-1)^{\abs{\beta}} 2^{\abs{\alpha-2\beta}} x^{\alpha-2\beta}}{\beta!\,(\alpha-2\beta)!}
  \in \RR[x_1,\dots,x_d]
  &&\text{for $\alpha\in \NN_0^d$.}
\end{align}
It is straightforward to check that
\begin{equation}
  \label{eq:HermiteRecursive}
  H_0 = 1,\quad H_1 = 2z,\quad \text{and}\quad H_{n+1} =  2z H_n - 2n H_{n-1}\quad\text{for all $n\in \NN$}.
\end{equation}
Recall the Gaussian integral for $\ell \in \NN_0$ and $\lambda \in {]0,\infty[}$:
\begin{align}
  \label{eq:GaussianIntegral}
  &&
  \int_\RR \xi^\ell \exp\biggl(-\frac{\xi^2}{\lambda} \biggr)\,\frac{\D\xi}{\sqrt{\lambda\pi}}
  &=
  \mathrlap{
    \begin{cases}
      0 & \text{if $\ell $ is odd,} \\
      \frac{\lambda^{\ell / 2}  \ell ! }{ 2^\ell (\ell /2)! } & \text{if $\ell $ is even.}
    \end{cases}
  }
\shortintertext{Therefore}
  \label{eq:GaussianIntegralEstimateEven}
  &&
  \int_\RR \xi^{2m} \exp\biggl(-\frac{\xi^2}{\lambda} \biggr)\,\frac{\D\xi}{\sqrt{\lambda\pi}}
  &\le
  \lambda^m m!
  &&\text{for all $m\in \NN_0$, $\lambda \in {]0,\infty[}$,}
\shortintertext{and}
  \label{eq:GaussianIntegralEstimate}
  &&
  \int_\RR \xi^\ell \exp\biggl(-\frac{\xi^2}{\lambda} \biggr)\,\frac{\D\xi}{\sqrt{\lambda\pi}}
  &\le
  \frac{\lambda^{\ell/2} \sqrt{\ell!}}{2^{\ell/2}}
  &&\text{for all $\ell\in \NN_0$, $\lambda \in {]0,\infty[}$,}
\end{align}
because, for $\ell = 2m$, the right-hand side of \eqref{eq:GaussianIntegral} is
$\lambda^m \ell! / (2^\ell m!)$,
and the estimates $\ell! / (2^\ell m!) = 2^{-\ell} \binom{\ell}{m} m! \le m!$
and $\ell! / (2^\ell m!) = \sqrt{\ell!}\,2^{-\ell} \binom{\ell}{m}^{1/2} \le \sqrt{\ell!}\, 2^{-\ell/2}$
hold.
Using the recursive relation \eqref{eq:HermiteRecursive} one easily checks that
\begin{align}
  \int_\RR H_n(\xi)\,\xi^m \,\exp\bigl(-\xi^2 \bigr)\,\frac{\D\xi}{\sqrt{\pi}}
  &=
  0
\intertext{for $n\in \NN_0$, $m\in \{0,\dots,n-1\}$, and}
  \label{eq:superfluous}
  \int_\RR H_n(\xi)\,\xi^{n+\ell} \,\exp\bigl(-\xi^2 \bigr)\,\frac{\D\xi}{\sqrt{\pi}}
  &=
  \begin{cases}
    0 & \text{if $\ell $ is odd,} \\
    \frac{(n +\ell) ! }{ 2^\ell (\ell /2)! } & \text{if $\ell $ is even}
  \end{cases}
\intertext{for $n,\ell\in \NN_0$. In particular}
  \label{eq:HermiteOrtho}
  \int_\RR H_n(\xi)\,H_m(\xi) \,\exp\bigl(-\xi^2 \bigr)\,\frac{\D\xi}{\sqrt{\pi}}
  &=
  0
\shortintertext{and}
  \int_\RR H_n(\xi)^2 \,\exp\bigl(-\xi^2 \bigr)\,\frac{\D\xi}{\sqrt{\pi}}
  =
  \int_\RR H_n(\xi)\,(2\xi)^n \,&\exp\bigl(-\xi^2 \bigr)\,\frac{\D\xi}{\sqrt{\pi}}
  =
  2^n n!
  \label{eq:HermiteNorm}
\end{align}
for $n\in \NN_0$, $m\in \{0,\dots,n-1\}$.

In order to efficiently deal with the multi-dimensional case we use the dot-product
for $\ell$-tuples $r = (r_1,\dots,r_\ell), s = (s_1,\dots,s_\ell) \in \ring{R}^\ell$ in any ring $\ring{R}$, namely
\begin{equation}
  (r\cdot s) \coloneqq \sum_{j=1}^{\smash\ell} r_j s_j \in \ring{R}.
\end{equation}
Note that by the multinomial theorem,
\begin{equation}
  \label{eq:multinomial}
  (r\cdot s)^n = \sum_{\substack {\alpha \in \NN_0^\ell \\ \abs\alpha = n}} \frac{r^\alpha s^\alpha}{\alpha!}
  \quad\quad\text{for all }n\in \NN_0.
\end{equation}
We will use the notation of the dot-product e.g.~in the special case $\ell = d$ and $\ring{R} = \RR[x_1,\dots,x_d]$,
then $(x \cdot \xi) = \sum_{j=1}^d x_j \xi_j \in \RR[x_1,\dots,x_d]$ for all $\xi \in \RR^d$.

We write $B^d(\rad)$ for the closed euclidean ball in $\RR^d$ with radius $\rad \in {[0,\infty[}$, and
we write $\Stetig\bigl(B^d(\rad)\bigr)$ for the vector space of continuous real-valued functions on it.
We then define the inner product
$\skal{\argument}{\argument}_\rad \colon \Stetig\bigl(B^d(\rad)\bigr) \times \Stetig\bigl(B^d(\rad)\bigr) \to \RR$,
\begin{equation}
  (f,g)
  \mapsto
  \skal{f}{g}_\rad \coloneqq \int_{B^d(\rad)} f(\xi)\,g(\xi)\,\exp\bigl(-(\xi\cdot\xi)\bigr)\,\frac{\D^d\xi}{\sqrt{\pi}^d}
\end{equation}
and for $f,g \in \Stetig(\RR^d)$ we define
\begin{equation}
  \label{eq:skalInftyDef}
  \skal{f}{g}_\infty \coloneqq \lim_{\rad \to \infty}  \skal{f}{g}_\rad
  \quad\quad\text{(provided the limit exists).}
\end{equation}
Note that
\begin{equation}
  \label{eq:monotoneInnerproduct}
  \skal{f}{f}_{\rad} \le \skal{f}{f}_{\rad'}  \le \skal{f}{f}_\infty
\end{equation}
for all $f \in \Stetig(\RR^d)$ and all $\rad,\rad' \in {[0,\infty[}$ with $\rad\leq\rad'$ (and possibly $\skal{f}{f}_\infty = \infty$).
We identify $\RR[x_1,\dots,x_d]$ with the linear subspace of polynomial functions in $\Stetig(\RR^d)$.
From \eqref{eq:HermiteOrtho} and \eqref{eq:HermiteNorm} it then follows that
\begin{equation}
  \label{eq:HermiteOrthoMultiD}
  \skal[\big]{H_\alpha}{H_\beta}_\infty
  =
  \prod_{j=1}^{\smash d} \int_{\RR} H_{\alpha_j}(\xi_j)\,H_{\beta_j}(\xi_j)\,\exp\bigl(-\xi_j^2\bigr)\,\frac{\D\xi_j}{\sqrt{\pi}}
  =
  \delta_{\alpha,\beta} 2^{\abs{\alpha}} \alpha!
  \quad\quad\text{for all $\alpha,\beta\in \NN_0^d$.}
\end{equation}
Consider any $D\in \NN_0$. The multi-dimensional Hermite polynomials $H_\alpha$ with $\alpha \in \NN_0^d$, $\abs{\alpha} \le D$
clearly form a basis of $\RR[x_1,\dots,x_d]_{D}$. Consequently, if $p\in \RR[x_1,\dots,x_d]_{D}$, then
\begin{equation}
  \label{eq:orthobasis}
  p
  =
  \sum_{\substack{\alpha\in\NN_0^d \\ \abs{\alpha} \le D}}
  \frac{H_\alpha}{2^{\abs\alpha} \alpha!} \skal{H_\alpha}{p}_\infty
  \quad\quad\text{and}\quad\quad
  \skal{H_\alpha}{p}_\infty = 0
  \quad\text{for $\alpha \in \NN_0^d$ with $\abs{\alpha}>D$.}
\end{equation}
Moreover, by combining \eqref{eq:GaussianIntegralEstimateEven} and \eqref{eq:monotoneInnerproduct} we obtain the estimate
\begin{equation}
  \label{eq:monomialInnerproducts}
  \skal{x^\alpha}{x^\alpha}_\rad
  \le
  \skal{x^\alpha}{x^\alpha}_\infty
  \le
  \alpha!
\end{equation}
for all $\rad \in {[0,\infty]}$ and $\alpha \in \NN_0^d$.

The formal sum of products of Hermite polynomials in different variables is described by Mehler's formula.
Note that for every real algebra $\A$ and every $p\in \A[[\nu]]$ the formal power series
$\exp(\nu p) \in \A[[\nu]]$ is well-defined, as in every degree of $\nu$ only finitely
many terms contribute. Mehler's formula in the univariate case (i.e., $d=1$) states that the identity
\begin{equation}
  \label{eq:Mehler1D}
  \sum_{n=0}^\infty \frac{\nu^n H_n(x) H_n(y)}{2^n n!}
  =
  \bigl(1-\nu^2\bigr)^{-\frac{1}{2}}
  \exp\biggl( - \frac{x^2\nu^2 -2xy\nu + y^2\nu^2}{1-\nu^2} \biggr)
\end{equation}
holds in $\RR[[x,y,\nu]]$ (see, e.g., \cite[Prob.~23, p.~380]{szego:ortoPoly} or
\cite[\S111, p.~198]{rainville} or \cite[(6.1.13)]{askey}).
It follows that
\begin{equation}
  \label{eq:MehlerMultiD}
  \sum_{\alpha \in \NN_0^d} \frac{\nu^{\abs{\alpha}} H_\alpha(x) H_\alpha(y)}{2^{\abs{\alpha}} \alpha!}
  =
  \bigl(1-\nu^2\bigr)^{-\frac{d}{2}}
  \exp\biggl( - \frac{(x\cdot x) \nu^2 - 2(x\cdot y) \nu + (y\cdot y) \nu^2}{1-\nu^2} \biggr)
\end{equation}
holds in $\RR[[x_1,\dots,x_n,y_1,\dots,y_n,\nu]]$.
Note that this formal power series is absolutely and
locally uniformly convergent on $\CC^d \times \CC^d \times \DD$, where $\DD$ is the open unit disc in $\CC$.

We will construct a positive integration kernel by cutting the exponential series in \eqref{eq:MehlerMultiD}
at a sufficiently large even power. For $N\in \NN_0$ and $\lambda \in {[0,1[}$ we first define
\begin{equation}
  \label{eq:expNDef}
  \exp_N
  \coloneqq
  \sum_{n=0}^{2N} \frac{z^n}{n!} \in \RR[z] .
\end{equation}
and then we define the \emph{truncated Mehler kernel} $\Kernel_{N,\lambda} \colon \RR^d \to \RR[x_1,\dots,x_d]_{4N}$,
\begin{align}
  \label{eq:KNDef}
  \xi \mapsto \Kernel_{N,\lambda}(\xi)
  \coloneqq
  \bigl(1-\lambda^2\bigr)^{-\frac d 2}
  \exp_N\biggl(-\frac{(x\cdot x) \lambda^2 - 2(x\cdot \xi) \lambda}{1-\lambda^2} \biggr)
  \exp\biggl( - \frac{(\xi \cdot \xi) \lambda^2}{1-\lambda^2} \biggr)
  .
\end{align}
All component functions $\coefficient{ \Kernel_{N,\lambda}}{\alpha} \colon \RR^d \to \RR$
for $\alpha \in \NN_0^d$ are products of a polynomial and a Gaussian factor;
in particular they are bounded from above and below by a constant function.
We also define, for all $N \in \NN_0$, $\lambda \in {[0,1[}$, and $\rad \in {[0,\infty]}$,
the integral operator $\IOp_{N,\lambda,\rad} \colon \RR[x_1,\dots,x_d] \to \RR[x_1,\dots,x_d]_{4N}$,
\begin{equation}
  \label{eq:IOpDef}
  p
  \mapsto
  \IOp_{N,\lambda,\rad}(p)
  \coloneqq
  \sum_{\substack{\alpha \in \NN_0^d \\ \abs{\alpha} \le 4N}}
  \skal[\big]{ \coefficient{\Kernel_{N,\lambda}}{\alpha} }{ p }_\rad \, x^\alpha
  .
\end{equation}
Note that $\IOp_{N,\lambda,\rad}$ is indeed also defined for $\rad = \infty$ because the functions
$\abs[\big]{\coefficient{\Kernel_{N,\lambda}}{\alpha}}$ are bounded on $\RR^d$.

\begin{lemma} \label{lemma:expN}
  For all $N \in \NN_0$ the polynomial $\exp_N$ is a sum of squares, i.e., $\exp_N \in \SOS{\RR[z]}$.
\end{lemma}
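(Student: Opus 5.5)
Since we are in the univariate case $d=1$, a polynomial in $\RR[z]$ is a sum of squares exactly when it is pointwise nonnegative on $\RR$ (recalled in Section~\ref{sec:notation} via \cite[Prop.~3.1]{schmuedgen:TheMomentProblem}). So the plan is to show that $\exp_N(\xi) \ge 0$ for all $\xi \in \RR$; in fact we will show $\exp_N(\xi) > 0$.

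The case $N=0$ is trivial, since $\exp_0 = 1$. For $N \ge 1$, $\exp_N$ has even degree $2N$ and positive leading coefficient $1/(2N)!$, so it tends to $+\infty$ as $\abs{\xi}\to\infty$. Hence it attains a global minimum at some $\xi_0 \in \RR$, and there $\exp_N'(\xi_0) = 0$. Differentiating term by term gives
\[
\exp_N'(z) = \sum_{n=0}^{2N-1} \frac{z^n}{n!} = \exp_N(z) - \frac{z^{2N}}{(2N)!}.
\]
Evaluating at the critical point yields $\exp_N(\xi_0) = \xi_0^{2N}/(2N)! \ge 0$.

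This value is actually strictly positive. The case $\xi_0 = 0$ would give $\exp_N(0) = 1 \neq 0$, a contradiction, so $\xi_0 \neq 0$ and the minimum is $>0$. Therefore $\exp_N(\xi) \ge \exp_N(\xi_0) > 0$ for all $\xi\in\RR$, and the univariate characterization gives $\exp_N \in \SOS{\RR[z]}$.

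There is no real obstacle here. The only point requiring care is to use the truncation at an even degree $2N$: this guarantees both the existence of a global minimum (even degree, positive leading coefficient) and the identity $\exp_N = \exp_N' + z^{2N}/(2N)!$ with an even power as the remainder.
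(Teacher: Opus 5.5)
Your proof is correct, but it establishes pointwise positivity by a different route than the paper. Both arguments first reduce to showing $\exp_N(\xi) \ge 0$ on $\RR$ via the univariate characterization of $\SOS{\RR[z]}$.

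The paper handles $\xi \ge 0$ trivially. For $\xi < 0$ it uses the Lagrange form of the remainder of the exponential series, $\exp(\xi) = \exp_N(\xi) + \exp(\eta)\,\xi^{2N+1}/(2N+1)!$. This gives $\exp_N(\xi) > \exp(\xi) > 0$, because the first omitted power $\xi^{2N+1}$ is odd and hence negative.

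You instead evaluate the identity $\exp_N' = \exp_N - z^{2N}/(2N)!$ at a global minimizer. This is purely algebraic and never needs the transcendental function $\exp$ or Taylor's theorem. The price is that your lower bound $\xi_0^{2N}/(2N)!$ depends on an unknown minimizer. The paper's argument gives the explicit and more informative comparison $\exp_N \ge \exp$ on ${]{-\infty},0]}$.

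In both proofs, truncating at even degree $2N$ is what makes the argument work. For you it makes $\exp_N$ coercive and the correction term $z^{2N}$ nonnegative. For the paper it makes the first omitted term $\xi^{2N+1}$ negative for $\xi < 0$.
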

\begin{proof}
  We only have to check that $\exp_N(\xi) \ge 0$ for all $\xi \in \RR$, $N\in \NN_0$, so let any
  $N\in \NN_0$ and $\xi \in \RR$ be given.
  If $\xi \ge 0$, then it is immediately clear from \eqref{eq:expNDef} that $\exp_N(\xi) > 0$.
  If $\xi < 0$, then by the Lagrange form of the remainder for the series expansion of $\exp$
  there exists $\eta \in {[\xi, 0]}$ such that
  \begin{align*}
    \exp(\xi)
    &=
    \exp_N(\xi) + \frac{\exp({\eta})}{(2N+1)!}\,\xi^{2N+1},
  \shortintertext{and therefore}
    \exp_N(\xi)
    &=
    \exp(\xi) - \frac{\exp({\eta})}{(2N+1)!}\,\xi^{2N+1} > 0
  \end{align*}
  because $\xi^{2N+1} < 0$.
\end{proof}

\begin{proposition} \label{proposition:SOS}
  Let $N \in \NN_0$, $\lambda \in {[0,1[}$, $\rad \in {[0,\infty]}$, and $p\in \RR[x_1,\dots,x_d]$.
  Assume that $p(\xi) \ge 0$ for all $\xi \in \RR^d$.
  Then $\IOp_{N,\lambda,\rad}(p) \in \SOS{\RR[x_1,\dots,x_d]}\cap \RR[x_1,\dots,x_d]_{4N}$.
\end{proposition}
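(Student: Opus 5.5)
By construction, $\IOp_{N,\lambda,\rad}(p)$ is the polynomial obtained by integrating the polynomial-valued kernel $\Kernel_{N,\lambda}(\xi)$ against the nonnegative weight $p(\xi)\exp(-(\xi\cdot\xi))\pi^{-d/2}$ over $B^d(\rad)$, coefficient by coefficient. The plan is therefore to show two things: that $\Kernel_{N,\lambda}(\xi)$ is a sum of squares for each fixed $\xi$, and that this pointwise SOS property survives the integration, because the cone of SOS polynomials of bounded degree is closed and convex.

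First, the pointwise statement. For fixed $\xi\in\RR^d$, the polynomial $q_\xi \coloneqq -\bigl((x\cdot x)\lambda^2 - 2(x\cdot\xi)\lambda\bigr)/(1-\lambda^2) \in \RR[x_1,\dots,x_d]_2$ is obtained by substituting into $z$. By Lemma~\ref{lemma:expN} we have $\exp_N = \sum_\ell s_\ell^2$ with $s_\ell \in \RR[z]$. Hence $\exp_N(q_\xi) = \sum_\ell s_\ell(q_\xi)^2$ is a sum of squares of degree at most $4N$. The remaining factors $(1-\lambda^2)^{-d/2}$ and $\exp(-(\xi\cdot\xi)\lambda^2/(1-\lambda^2))$ are positive constants, so $\Kernel_{N,\lambda}(\xi) \in \SOS{\RR[x_1,\dots,x_d]} \cap \RR[x_1,\dots,x_d]_{4N}$ for every $\xi$.

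Next, the integration. I would use the Gram matrix description: a polynomial $f$ of degree at most $4N$ is SOS if and only if $f = v^\top G v$ with $v$ the vector of monomials of degree at most $2N$ and $G$ positive semidefinite. The cleanest route avoids closedness arguments entirely by choosing $G(\xi)$ measurably. Since $\exp_N$ is a fixed univariate polynomial, fix its decomposition $\exp_N = \sum_\ell s_\ell^2$ once and for all. Then $s_\ell(q_\xi)$ is a polynomial in $x$ whose coefficients are polynomials in $\xi$. Writing $s_\ell(q_\xi) = c_\ell(\xi)^\top v$, we get $G(\xi) = w(\xi)\sum_\ell c_\ell(\xi)c_\ell(\xi)^\top$, where $w(\xi) \coloneqq (1-\lambda^2)^{-d/2}\exp\bigl(-(\xi\cdot\xi)\lambda^2/(1-\lambda^2)\bigr)$. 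The entries of $G(\xi)$ are polynomials times a Gaussian, hence bounded and continuous.

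Now $\IOp_{N,\lambda,\rad}(p) = v^\top \bigl(\int_{B^d(\rad)} G(\xi)\,p(\xi)\exp(-(\xi\cdot\xi))\,\D^d\xi/\sqrt{\pi}^d\bigr) v$. This identity is just linearity of integration applied coefficientwise, matching the definition \eqref{eq:IOpDef}. The integrals exist, including for $\rad=\infty$, because of the Gaussian factor $\exp(-(\xi\cdot\xi))$ against polynomial growth. The matrix $\int G(\xi)\,p(\xi)\,\ldots\,\D^d\xi$ is an integral of positive semidefinite matrices against a nonnegative measure, since $p\ge 0$. It is therefore positive semidefinite: for any vector $u$, $u^\top(\int\cdots)u = \int u^\top G(\xi)u\,p(\xi)\cdots \ge 0$. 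Hence $\IOp_{N,\lambda,\rad}(p)$ is an SOS of degree at most $4N$.

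The main obstacle is making the passage from pointwise SOS to the integral rigorous, including for $\rad=\infty$. Choosing the fixed decomposition of $\exp_N$, so that the Gram matrix depends polynomially on $\xi$ times a Gaussian, handles this: it gives both measurability and integrability, and the limit $\rad\to\infty$ in \eqref{eq:skalInftyDef} is simply dominated convergence applied entrywise.
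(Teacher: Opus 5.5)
Your proof is correct. It shares the paper's starting point: Lemma~\ref{lemma:expN} makes $\Kernel_{N,\lambda}(\xi)\,p(\xi)$ a sum of squares of degree at most $4N$ for every fixed $\xi$. Where it differs is in how you pass to the integral.

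The paper argues abstractly. For $\rad<\infty$ it reads $\IOp_{N,\lambda,\rad}(p)$ as a vector-valued Riemann integral of the continuous map $\xi\mapsto\Kernel_{N,\lambda}(\xi)\,p(\xi)$. This makes it a limit of Riemann sums in the cone $C=\SOS{\RR[x_1,\dots,x_d]}\cap\RR[x_1,\dots,x_d]_{4N}$, and the paper concludes by the cited closedness of $C$. The case $\rad=\infty$ then follows from one more limit and closedness again.

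You instead fix a single decomposition $\exp_N=\sum_\ell s_\ell^2$. This gives a Gram matrix $G(\xi)$ whose entries are polynomials in $\xi$ times a Gaussian. The Gram matrix of $\IOp_{N,\lambda,\rad}(p)$ is then the entrywise integral of $G(\xi)\,p(\xi)$ against the Gaussian weight, and its positive semidefiniteness follows directly from the quadratic-form computation.

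What each route buys:
\begin{itemize}
\item Your route is more self-contained: it needs neither the closedness theorem for the truncated SOS cone nor Riemann sums.
\item It treats $\rad=\infty$ on the same footing as finite $\rad$, via Lebesgue integrability and dominated convergence against \eqref{eq:skalInftyDef}.
\item It also produces an explicit Gram matrix for $\IOp_{N,\lambda,\rad}(p)$.
\item The paper's route is shorter and needs no bookkeeping of a particular decomposition.
\end{itemize}

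One point you leave implicit is that $\deg s_\ell\le N$. This holds because $\deg\exp_N=2N$ and leading terms of squares cannot cancel. It is what makes $s_\ell(q_\xi)$ of degree at most $2N$, so that it can be written as $c_\ell(\xi)^\top v$ with $v$ the monomials of degree at most $2N$.
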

\begin{proof}
  Write $C$ for the convex cone $\SOS{\RR[x_1,\dots,x_d]}\cap \RR[x_1,\dots,x_d]_{4N}$.
  From the previous Lemma~\ref{lemma:expN} it follows that $\Kernel_{N,\lambda}(\xi)\,p(\xi) \in C$ for all $\xi \in \RR^d$.
  As $\RR^d \ni \xi \mapsto \Kernel_{N,\lambda}(\xi)\,p(\xi) \in C \subseteq \RR[x_1,\dots,x_d]_{4N}$
  is continuous, we can evaluate $\IOp_{N,\lambda,\rad}(p)$ for $\rad < \infty$ as a vector-valued Riemann integral:
  \begin{equation*}
    \IOp_{N,\lambda,\rad}(p)
    =
    \sum_{\substack{\alpha \in \NN_0^d \\ \abs{\alpha} \le 4N}}
    \skal[\big]{ \coefficient{\Kernel_{N,\lambda}}{\alpha} }{ p }_r \, x^\alpha
    =
    \int_{B^d(\rad)}
    \Kernel_{N,\lambda}(\xi)\,p(\xi)\,\exp\bigl(-(\xi\cdot\xi)\bigr)\,\frac{\D^d\xi}{\sqrt{\pi}^d}
    .
  \end{equation*}
  In particular, $\IOp_{N,\lambda,\rad}(p)$ is the limit of a sequence in $C$ (namely, the Riemann sums).
  As the convex cone $C$ is closed in $\RR[x_1,\dots,x_d]_{4N}$ (see e.g.~\cite[Cor.~3.34]{laurent}) it follows that $\IOp_{N,\lambda,\rad}(p) \in C$
  for all $\rad \in {[0,\infty[}$, and then also for $\rad = \infty$.
\end{proof}

\begin{proposition} \label{proposition:rad2infty}
  Let $N \in \NN_0$, $\lambda \in {[0,1[}$, $\rad \in {[0,\infty]}$, and $p\in \RR[x_1,\dots,x_d]$.
  Then for all $\gamma \in \NN_0^d$ with $\abs{\gamma} \le 2N$ the identity
  \begin{equation}
    \label{eq:rad2infty}
    \coefficient[\big]{\IOp_{N,\lambda,\rad}(p)}{\gamma}
    =
    \sum_{ \alpha\in \NN_0^d }
    \frac{ \lambda^{\abs \alpha} \,\coefficient{H_\alpha}{\gamma}\,\skal{H_\alpha}{p}_\rad }{2^{\abs \alpha} \alpha!}
  \end{equation}
  holds, and the series on the right-hand side is absolutely convergent.
\end{proposition}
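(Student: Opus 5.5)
The plan is to identify the low-degree coefficients of the truncated kernel $\Kernel_{N,\lambda}(\xi)$ with those of the full Mehler kernel \eqref{eq:MehlerMultiD} at $\nu=\lambda$, and then to interchange the resulting series with the integral $\skal{\argument}{p}_\rad$.

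\emph{Step 1 (truncation is invisible in degree $\le 2N$).} Write
\begin{equation*}
A(\xi) \coloneqq -\frac{(x\cdot x)\lambda^2 - 2(x\cdot\xi)\lambda}{1-\lambda^2} \in \RR[x_1,\dots,x_d].
\end{equation*}
This polynomial has no constant term in $x$, so $A(\xi)^n$ only contains monomials $x^\gamma$ with $\abs\gamma \ge n$. Consequently, for $\abs\gamma\le 2N$, the terms $A(\xi)^n/n!$ with $n>2N$, which are dropped in $\exp_N$, do not contribute to the coefficient of $x^\gamma$. Hence $\coefficient{\Kernel_{N,\lambda}(\xi)}{\gamma}$ equals the coefficient of $x^\gamma$ in the entire function
\begin{equation*}
x\mapsto (1-\lambda^2)^{-d/2}\exp\bigl(A(\xi)\bigr)\exp\bigl(-(\xi\cdot\xi)\lambda^2/(1-\lambda^2)\bigr).
\end{equation*}
By \eqref{eq:MehlerMultiD} with $y=\xi$ and $\nu=\lambda\in[0,1[$, and using its absolute, locally uniform convergence on $\CC^d\times\CC^d\times\DD$, this function equals $\sum_\alpha \lambda^{\abs\alpha}H_\alpha(x)H_\alpha(\xi)/(2^{\abs\alpha}\alpha!)$. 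Absolute convergence allows extracting the $x^\gamma$-coefficient termwise (for instance via Cauchy's integral formula on a polydisc). This yields, for every $\xi\in\RR^d$,
\begin{equation*}
\coefficient{\Kernel_{N,\lambda}(\xi)}{\gamma}=\sum_{\alpha\in\NN_0^d}\frac{\lambda^{\abs\alpha}\coefficient{H_\alpha}{\gamma}H_\alpha(\xi)}{2^{\abs\alpha}\alpha!}.
\end{equation*}

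\emph{Step 2 (interchange sum and integral).} Inserting this into \eqref{eq:IOpDef} gives \eqref{eq:rad2infty}, provided we may swap $\sum_\alpha$ with $\int_{B^d(\rad)}$. By Fubini/Tonelli it suffices to show
\begin{equation*}
\sum_\alpha \frac{\lambda^{\abs\alpha}\abs{\coefficient{H_\alpha}{\gamma}}}{2^{\abs\alpha}\alpha!}\,\skal{\abs{H_\alpha}}{\abs p}_\rad<\infty.
\end{equation*}
The same bound then also gives absolute convergence of the series in \eqref{eq:rad2infty}. By Cauchy--Schwarz together with \eqref{eq:monotoneInnerproduct} and \eqref{eq:HermiteOrthoMultiD},
\begin{equation*}
\skal{\abs{H_\alpha}}{\abs p}_\rad\le \bigl(2^{\abs\alpha}\alpha!\bigr)^{1/2}\skal{p}{p}_\infty^{1/2},
\end{equation*}
and $\skal{p}{p}_\infty<\infty$ by \eqref{eq:monomialInnerproducts}.

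\emph{Step 3 (coefficient estimate; the main obstacle).} The remaining task is to control $\abs{\coefficient{H_\alpha}{\gamma}}$ sharply enough. By \eqref{eq:HermiteExplicitMultiD}, this coefficient vanishes unless $\alpha=\gamma+2\beta$ with $\beta\in\NN_0^d$. In that case only one term survives, namely $\abs{\coefficient{H_\alpha}{\gamma}}=\alpha!\,2^{\abs\gamma}/(\beta!\,\gamma!)$. The summand is therefore bounded by
\begin{equation*}
\skal{p}{p}_\infty^{1/2}\,\frac{2^{\abs\gamma}}{\gamma!}\,\lambda^{\abs\gamma+2\abs\beta}\,2^{-\abs\gamma/2-\abs\beta}\,\frac{\sqrt{(\gamma+2\beta)!}}{\beta!}.
\end{equation*}
Using $(\gamma+2\beta)!\le 2^{\abs{\gamma+2\beta}}\gamma!\,(2\beta)!$ and $(2\beta)!\le 4^{\abs\beta}(\beta!)^2$, the factor $\sqrt{(\gamma+2\beta)!}/\beta!$ is at most $C_\gamma\,2^{2\abs\beta}$. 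The general term is then $\le C'_\gamma(2\lambda^2)^{\abs\beta}$. This is only geometric for $\lambda^2<1/2$, so this crude estimate must be refined. Instead I would use the sharper bound $\binom{\gamma_j+2\beta_j}{\gamma_j}\le(1+2\beta_j)^{\gamma_j}$, which is polynomial in $\beta$, together with $(2\beta_j)!\le 4^{\beta_j}(\beta_j!)^2$. This yields $\sqrt{(\gamma+2\beta)!}/\beta!\le \sqrt{\gamma!}\,\mathrm{poly}(\beta)\,2^{\abs\beta}$, so the general term is $\le \mathrm{poly}(\beta)\,\lambda^{2\abs\beta}$. This is summable over $\beta\in\NN_0^d$ since $\lambda<1$, which completes the argument.
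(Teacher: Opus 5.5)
Your proof is correct, but for $\rad=\infty$ it takes a genuinely different and shorter route than the paper. The paper first proves the identity only for $\rad<\infty$. There it interchanges sum and integral using uniform convergence of the $x^\gamma$-coefficient of the Mehler series on the compact set $B^d(\rad)\times[-\lambda,\lambda]$. It then derives the bound $\lambda^{\abs\alpha}\abs{\coefficient{H_\alpha}{\gamma}}\,2^{-\abs\alpha/2}(\alpha!)^{-1/2}\,\skal{p}{p}_\infty^{1/2}$ on the terms, uniform in $\rad$, shows it is summable, and passes to $\rad\to\infty$ by an $\epsilon/3$-argument.

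You use exactly the same termwise bound: Cauchy--Schwarz against $\skal{H_\alpha}{H_\alpha}_\infty=2^{\abs\alpha}\alpha!$, plus a polynomial-in-$\beta$ bound on $\abs{\coefficient{H_\alpha}{\gamma}}$ that is essentially the paper's $\binom{\alpha}{\gamma}^{1/2}$ estimate. But you feed it as a summable $L^1$-majorant into Fubini--Tonelli, which settles every $\rad\in[0,\infty]$ at once. The finite-radius regularization and the limit argument then become unnecessary.

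Your argument also shows that the Remark after the paper's proof is too pessimistic. The obstruction it describes concerns the pointwise majorant of the Mehler series, which grows like $\exp\bigl((\xi\cdot\xi)\lambda^2/(1-\lambda^2)\bigr)$. Tonelli, however, only needs the termwise $L^1$-norms to be summable, and these are controlled by the much smaller $L^2$-norms of the $H_\alpha$.

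The paper's route buys an argument that stays entirely with integrals over compact balls and uniform convergence. Yours needs Lebesgue integration, so you should add two remarks:
\begin{enumerate}
\item State that $\skal{\argument}{\argument}_\infty$, which the paper defines as a limit over $\rad$, agrees with the Lebesgue integral over $\RR^d$ for the integrands involved. This follows by dominated convergence, since $\coefficient{\Kernel_{N,\lambda}}{\gamma}$ is bounded and $p$ and $H_\alpha p$ are Gaussian-integrable.
\item Finish by dominated convergence applied to the partial sums over $\abs\alpha\le K$, which converge pointwise to $\coefficient{\Kernel_{N,\lambda}(\xi)}{\gamma}$ by your Step 1, using your summable majorant. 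This sidesteps any question of how the $\alpha$-series is grouped when you extract coefficients in Step 1.
\end{enumerate}
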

\begin{proof}
  Let $\gamma \in \NN_0^d$ with $\abs{\gamma} \le 2N$ and note that
  \begin{equation}
    \coefficient[\bigg]{\exp_N\biggl(- \frac{(x\cdot x)\lambda^2 - 2(x\cdot \xi) \lambda}{1-\lambda^2}\biggr)}{\gamma}
    =
    \coefficient[\bigg]{\exp\biggl(- \frac{(x\cdot x)\lambda^2 - 2(x\cdot \xi) \lambda}{1-\lambda^2}\biggr)}{\gamma}
    \label{eq:rad2infty:internal}
    \thirdInternalTag
  \end{equation}
  because $\coefficient[\big]{ \bigl((x\cdot x)\lambda^2 - 2(x\cdot \xi) \lambda \bigr)^n }{\gamma} = 0$
  for all $n\in \NN_0$ with $n>2N$.
  In Mehler's formula \eqref{eq:MehlerMultiD}, the coefficient of $x^\gamma$ is a formal power series in $y_1,\dots,y_d,\nu$
  that converges absolutely and uniformly on the compact space $B^d(\rad) \times [-\lambda,\lambda]$ for $r < \infty$.
  Therefore we can exchange summation and integration so that the identity
  \begin{align*}
    \coefficient[\big]{&\IOp_{N,\lambda,\rad}(p)}{\gamma}
    =
    \\
    &=
    \skal[\big]{\coefficient{\Kernel_{N,\lambda}}{\gamma}}{p}_\rad
    \\
    &\stackrel{\!\!\eqref{eq:rad2infty:internal}\!\!}{=}
    \int_{B^d(\rad)}
    \bigl(1-\lambda^2\bigr)^{-\frac d 2}
    \coefficient[\bigg]{ \exp\biggl(- \frac{(x\cdot x)\lambda^2 - 2(x\cdot \xi) \lambda}{1-\lambda^2}\biggr) }{\gamma}
    \exp\biggl( - \frac{(\xi \cdot \xi) \lambda^2}{1-\lambda^2} \biggr)
    \,p(\xi)\,\exp\bigl(-(\xi\cdot\xi)\bigr)\,\frac{\D^d\xi}{\sqrt{\pi}^d}
    \\
    &=
    \int_{B^d(\rad)}
    \sum\nolimits_{\alpha \in \NN_0^d} \frac{\lambda^{\abs{\alpha}} \,\coefficient{H_\alpha(x)}{\gamma}\, H_\alpha(\xi)}{2^{\abs{\alpha}} \alpha!}
    \,p(\xi)\,\exp\bigl(-(\xi\cdot\xi)\bigr)\,\frac{\D^d\xi}{\sqrt{\pi}^d}
    \\
    &=
    \sum_{\alpha\in\NN_0^d} \frac{\lambda^{\abs{\alpha}} \coefficient{ H_\alpha }{\gamma} }{2^{\abs \alpha} \alpha!}
    \int_{B^d(\rad)}
    H_\alpha(\xi)
    \,p(\xi)\,\exp\bigl(-(\xi\cdot\xi)\bigr)\,\frac{\D^d\xi}{\sqrt{\pi}^d}
    \\
    &=
    \sum_{\alpha\in\NN_0^d} \frac{\lambda^{\abs{\alpha}} \,\coefficient{ H_\alpha }{\gamma} \,\skal{ H_\alpha }{ p }_\rad }{2^{\abs \alpha} \alpha!}
  \end{align*}
  holds and this series over $\alpha \in \NN_0^d$ is absolutely convergent.

  Yet in order to pass to the limit $\rad \to \infty$
  we need a uniform estimate for the absolute convergence of the series on the right-hand side for all $r \in {[0,\infty]}$:
  As a first step, the estimate
  \begin{align*}
    \abs[\bigg]{ \frac{\lambda^{\abs \alpha} \,\coefficient{H_\alpha}{\gamma} \,\skal{H_\alpha}{p}_\rad }{2^{\abs \alpha} \alpha!}}
    &\le
    \frac{\lambda^{\abs \alpha} \,\abs{\coefficient{H_\alpha}{\gamma}}}{2^{\abs \alpha} \alpha!} \,\skal[\big]{H_\alpha}{H_\alpha}^{1/2}_\rad
    \skal[\big]{p}{p}^{1/2}_\rad
    \\
    &\le
    \frac{\lambda^{\abs \alpha} \,\abs{\coefficient{H_\alpha}{\gamma}}}{2^{\abs \alpha} \alpha!} \,\skal[\big]{H_\alpha}{H_\alpha}^{1/2}_\infty
    \skal[\big]{p}{p}^{1/2}_\infty
    \\
    &\le
    \frac{\lambda^{\abs \alpha} \,\abs{\coefficient{H_\alpha}{\gamma}}}{2^{\abs \alpha/2} \sqrt{\alpha!}}
    \skal[\big]{p}{p}^{1/2}_\infty
  \end{align*}
  holds for all $r\in {[0,\infty]}$,
  where we first applied the Cauchy--Schwartz inequality, and then \eqref{eq:monotoneInnerproduct} and \eqref{eq:HermiteOrthoMultiD}.
  Moreover, by \eqref{eq:HermiteExplicitMultiD},
  $\abs{\coefficient{H_\alpha}{\gamma}} = 0$ if $\alpha - \gamma \not\in 2\NN_0^d$ and, if $\alpha - \gamma \in 2\NN_0^d$, then
  \begin{align*}
    \abs{\coefficient{H_\alpha}{\gamma}}
    =
    \alpha!\,\frac{2^{\abs \gamma} }{((\alpha-\gamma)/2)! \,\gamma!}
    =
    \sqrt{\alpha!} \,\frac{2^{\abs \gamma}}{\sqrt{\gamma!}}
    \binom{\alpha-\gamma}{(\alpha-\gamma)/2}^{\frac12}
    \binom{\alpha}{\gamma}^{\frac12}
    \le
    \sqrt{\alpha!}\,\frac{2^{\abs\alpha/2 + \abs \gamma/2}}{\sqrt{\gamma!}}
    \binom{\alpha}{\gamma}^{\frac12}
    .
  \end{align*}
  So
  \begin{equation*}
    \sum_{\alpha\in \NN_0^d}
    \frac{\lambda^{\abs \alpha} \abs{\coefficient{H_\alpha}{\gamma}}}{2^{\abs \alpha/2} \sqrt{\alpha!}}
    \le
    \frac{2^{\abs \gamma/2}}{\sqrt{\gamma!}}
    \sum_{\substack{\alpha\in \NN_0^d \\ \gamma\le \alpha}}
    \lambda^{\abs \alpha} \binom{\alpha}{\gamma}^{\frac12}
    <
    \infty
  \end{equation*}
  because $\sum_{n=k}^\infty \lambda^n \binom{n}{k}^{1/2} \le (k!)^{-1/2} \sum_{n=k}^\infty \lambda^n \,n^{k/2}  < \infty$
  for all $k\in \NN_0$.
  Taken together, these estimates show that
  the right-hand side of \eqref{eq:rad2infty} is absolutely convergent also for $\rad = \infty$,
  and that for every $\epsilon \in {]0,\infty[}$ there is $n\in \NN_0$ such that
  \begin{equation*}
    \sum_{\substack{\alpha\in \NN_0^d \\ \abs{\alpha} > n}}
    \abs[\bigg]{ \frac{\lambda^{\abs \alpha} \,\coefficient{H_\alpha}{\gamma} \,\skal{H_\alpha}{p}_\rad}{2^{\abs \alpha} \alpha!} }
    \le
    \sum_{\substack{\alpha\in \NN_0^d \\ \abs{\alpha} > n}}
    \frac{\lambda^{\abs \alpha} \,\abs{\coefficient{H_\alpha}{\gamma}}}{2^{\abs \alpha/2} \sqrt{\alpha!}}
    \skal[\big]{p}{p}^{1/2}_\infty
    \le
    \epsilon/3
  \end{equation*}
  for all $\rad \in {[0,\infty]}$. It follows that
  \begin{equation*}
    \lim_{r\to\infty}
    \sum_{\alpha\in \NN_0^d}
    \frac{\lambda^{\abs \alpha} \,\coefficient{H_\alpha}{\gamma} \,\skal{H_\alpha}{p}_\rad}{2^{\abs \alpha} \alpha!}
    =
    \sum_{\alpha\in \NN_0^d}
    \frac{\lambda^{\abs \alpha} \,\coefficient{H_\alpha}{\gamma}\,\skal{H_\alpha}{p}_\infty}{2^{\abs \alpha} \alpha!}
  \end{equation*}
  because
  \begin{align*}
    \abs[\bigg]{
      \sum\nolimits_{\alpha\in \NN_0^d}
      \frac{\lambda^{\abs \alpha} \,\coefficient{H_\alpha}{\gamma}\,\skal{H_\alpha}{p}_\infty}{2^{\abs \alpha} \alpha!}
      &-
      \sum\nolimits_{\alpha\in \NN_0^d}
      \frac{\lambda^{\abs \alpha} \,\coefficient{H_\alpha}{\gamma}\,\skal{H_\alpha}{p}_\rad}{2^{\abs \alpha} \alpha!}
    }
    \le
    \\
    &\le
    \frac{2\epsilon}{3}
    +
    \abs[\bigg]{
      \sum\nolimits_{\substack{\alpha\in \NN_0^d \\ \abs{\alpha} \le n}}
      \frac{\lambda^{\abs \alpha} \,\coefficient{H_\alpha}{\gamma}}{2^{\abs \alpha} \alpha!}
      \Bigl( \skal[\big]{H_\alpha}{p}_\infty - \skal[\big]{H_\alpha}{p}_\rad \Bigr)
    }
    \\
    &\le
    \epsilon
  \end{align*}
  for all sufficiently large $\rad$.
\end{proof}

\begin{remark}
  In the case $\rad = \infty$, the above proof is rather tedious. Yet it seems there is no direct and
  less technical way to obtain this result: it cannot be obtained directly from the absolute
  convergence of the power series in the Mehler formula on $\CC^d\times\CC^d\times \DD$,
  because the asymptotic behavior in $\xi$ of its absolute values is $\sim \exp\bigl( (\xi\cdot \xi) \lambda^2 / (1-\lambda^2) \bigr)$,
  which is not integrable over $\exp\bigl(-(\xi\cdot\xi)\bigr)\,\D^d\xi$ for $\lambda$ close to $1$.
  The only reason to introduce the inner products $\skal{\argument}{\argument}_\rad$ with $\rad < \infty$
  was to use them as a tool to regularize the $\rad = \infty$\,-case in the above proof.
\end{remark}

\begin{corollary} \label{corollary:rad2infty}
  Let $M \in \NN_0$ and $p\in \RR[x_1,\dots,x_d]_{2M}$.
  Let $\lambda \in {[0,1[}$ and $N \in \NN_0$ such that $M \le N$. Then
  \begin{equation}
    \IOp_{N,\lambda,\infty}(p)
    =
    p
    +
    \sum_{\substack{\alpha\in \NN_0^d \\ \abs{\alpha}\le 2M}}
    \frac{ (\lambda^{\abs \alpha}-1) \,H_\alpha\,\skal{H_\alpha}{p}_\infty }{2^{\abs \alpha} \alpha!}
    \,+
    \sum_{\substack{\alpha\in \NN_0^d \\ 2N < \abs{\alpha}\le 4N}}
    \skal[\big]{ \coefficient{\Kernel_{N,\lambda}}{\alpha} }{ p }_\infty \, x^\alpha
    .
  \end{equation}
\end{corollary}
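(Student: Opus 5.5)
Split $\IOp_{N,\lambda,\infty}(p)$, which by definition \eqref{eq:IOpDef} is the sum of $\skal{\coefficient{\Kernel_{N,\lambda}}{\gamma}}{p}_\infty\,x^\gamma$ over all $\abs{\gamma}\le 4N$, into two parts: the terms with $\abs{\gamma}\le 2N$ and those with $2N<\abs{\gamma}\le 4N$. The second part is literally the last sum in the claimed identity, so nothing needs to be done there. It therefore suffices to show
\begin{equation*}
  \sum_{\substack{\gamma\in\NN_0^d\\ \abs{\gamma}\le 2N}} \coefficient[\big]{\IOp_{N,\lambda,\infty}(p)}{\gamma}\,x^\gamma
  =
  \sum_{\substack{\alpha\in \NN_0^d \\ \abs{\alpha}\le 2M}}
  \frac{ \lambda^{\abs \alpha} \,H_\alpha\,\skal{H_\alpha}{p}_\infty }{2^{\abs \alpha} \alpha!},
\end{equation*}
since by the expansion \eqref{eq:orthobasis} (with $D=2M$) the right-hand side equals $p$ plus the middle correction sum in the statement.

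For the low-degree coefficients, invoke Proposition~\ref{proposition:rad2infty} with $\rad=\infty$. For each $\gamma$ with $\abs{\gamma}\le 2N$ it gives
\begin{equation*}
  \coefficient[\big]{\IOp_{N,\lambda,\infty}(p)}{\gamma}=\sum_{\alpha\in\NN_0^d}\frac{\lambda^{\abs\alpha}\coefficient{H_\alpha}{\gamma}\skal{H_\alpha}{p}_\infty}{2^{\abs\alpha}\alpha!}.
\end{equation*}
By \eqref{eq:orthobasis}, $\skal{H_\alpha}{p}_\infty=0$ whenever $\abs{\alpha}>2M$. Hence this series is in fact a finite sum over $\abs{\alpha}\le 2M$, and all convergence issues disappear.

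Write $q\coloneqq\sum_{\abs\alpha\le 2M}\lambda^{\abs\alpha}H_\alpha\skal{H_\alpha}{p}_\infty/(2^{\abs\alpha}\alpha!)$ for the right-hand side of the target identity. The formula above then reads $\coefficient{\IOp_{N,\lambda,\infty}(p)}{\gamma}=\coefficient{q}{\gamma}$ for all $\abs{\gamma}\le 2N$. Since $q$ has degree at most $2M\le 2N$ (because $\deg H_\alpha=\abs\alpha$), every coefficient of $q$ occurs among these $\gamma$. Thus the low-degree part of $\IOp_{N,\lambda,\infty}(p)$ is exactly $q$.

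The only real content is Proposition~\ref{proposition:rad2infty}, which is already available; the main point of care is the bookkeeping. One needs the hypothesis $M\le N$ in two places: so that all of $q$ lies in the $\abs{\gamma}\le 2N$ range where the proposition applies, and so that the split into degrees $\le 2N$ and $>2N$ does not overlap or miss anything.
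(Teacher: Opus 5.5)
Your proof is correct and follows essentially the same route as the paper. You split $\IOp_{N,\lambda,\infty}(p)$ at degree $2N$, apply Proposition~\ref{proposition:rad2infty} with $\rad=\infty$ to the coefficients with $\abs{\gamma}\le 2N$, cut the $\alpha$-series down to $\abs{\alpha}\le 2M$ via \eqref{eq:orthobasis}, and use $\deg H_\alpha\le 2M\le 2N$ to reassemble $p$ plus the correction term. One small overstatement: $M\le N$ is only needed for that degree containment, since splitting $\{\abs{\gamma}\le 4N\}$ at $2N$ is a partition regardless of $M$.
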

\begin{proof}
  By Proposition~\ref{proposition:rad2infty} and the defining identity \eqref{eq:IOpDef} for $\IOp_{N,\lambda,\infty}(p)$,
  \begin{align*}
    \IOp_{N,\lambda,\infty}(p)
    &=
    \sum_{\substack{\gamma\in \NN_0^d \\ \abs{\gamma}\le 4N}}
    \coefficient[\big]{\IOp_{N,\lambda,\infty}(p)}{\gamma}\,x^\gamma
    \\
    &=
    \sum_{\substack{\gamma\in \NN_0^d \\ \abs{\gamma}\le 2N}}
    \sum_{ \alpha\in \NN_0^d }
    \frac{ \lambda^{\abs \alpha} \,\coefficient{H_\alpha}{\gamma}\,x^\gamma\,\skal{H_\alpha}{p}_\infty }{2^{\abs \alpha} \alpha!}
    \,+
    \sum_{\substack{\gamma\in \NN_0^d \\ 2N < \abs{\gamma}\le 4N}}
    \skal[\big]{ \coefficient{\Kernel_{N,\lambda}}{\gamma} }{ p }_\infty \, x^\gamma
    .
  \end{align*}
  By \eqref{eq:orthobasis}, $p = \sum_{\alpha\in\NN_0^d, \abs{\alpha} \le 2M} H_\alpha \,\skal{H_\alpha}{p}_\infty/ ( 2^{\abs\alpha} \alpha! )$
  and $\skal{H_\alpha}{p}_\infty = 0$ for $\alpha\in \NN_0^d$ with $\abs{\alpha} > 2M$.
  Moreover, $H_\alpha \in \RR[x_1,\dots,x_d]_{2M} \subseteq \RR[x_1,\dots,x_d]_{2N}$ for all $\alpha \in \NN_0^d$ with $\abs{\alpha} \le 2M$, and therefore
  \begin{align*}
    \sum_{\substack{\gamma\in \NN_0^d \\ \abs{\gamma}\le 2N}}
    \sum_{ \alpha\in \NN_0^d }
    \frac{ \lambda^{\abs \alpha} \,\coefficient{H_\alpha}{\gamma}\,x^\gamma\,\skal{H_\alpha}{p}_\infty }{2^{\abs \alpha} \alpha!}
    &=
    \sum_{\substack{\gamma\in \NN_0^d \\ \abs{\gamma}\le 2N}}
    \sum_{\substack{\alpha\in \NN_0^d \\ \abs{\alpha}\le 2M}}
    \frac{ \lambda^{\abs \alpha} \,\coefficient{H_\alpha}{\gamma}\,x^\gamma\,\skal{H_\alpha}{p}_\infty }{2^{\abs \alpha} \alpha!}
    \\
    &=
    \sum_{\substack{\alpha\in \NN_0^d \\ \abs{\alpha}\le 2M}}
    \frac{ \lambda^{\abs \alpha} \,H_\alpha\,\skal{H_\alpha}{p}_\infty }{2^{\abs \alpha} \alpha!}
    \\
    &=
    p
    +
    \sum_{\substack{\alpha\in \NN_0^d \\ \abs{\alpha}\le 2M}}
    \frac{ (\lambda^{\abs \alpha}-1) \,H_\alpha\,\skal{H_\alpha}{p}_\infty }{2^{\abs \alpha} \alpha!}
    .
  \end{align*}
\end{proof}

Consider $M \in \NN_0$ and $p\in \RR[x_1,\dots,x_d]_{2M}$. Assume that $p(\xi) \ge 0$ for all $\xi \in \RR^d$
and let $t\in {[0,1[}$. Proposition~\ref{proposition:SOS} and Corollary~\ref{corollary:rad2infty} together show that
\begin{equation}
  0
  \preccurlyeq
  \IOp_{N,\lambda,\infty}(p)
  =
  p
  +
  \sum_{\substack{\alpha\in \NN_0^d \\ \abs{\alpha}\le 2M}}
  \frac{ (\lambda^{\abs \alpha}-1) \,H_\alpha\,\skal{H_\alpha}{p}_\infty }{2^{\abs \alpha} \alpha!}
  \,+
  \sum_{\substack{\alpha\in \NN_0^d \\ 2N < \abs{\alpha}\le 4N}}
  \skal[\big]{ \coefficient{\Kernel_{N,\lambda}}{\alpha} }{ p }_\infty \, x^\alpha
\end{equation}
for all $\lambda \in {[0,1[}$ and sufficiently large $N\in \NN_0$. In order to show that
$0 \preccurlyeq p + \sum_{n=0}^{\Nexpl(p)} (x\cdot x)^n / (n!)^t$ we will show in Section~\ref{sec:firstEstimate} that
\begin{equation}
  \label{eq:firstEstimate}
  \sum_{\substack{\alpha\in \NN_0^d \\ \abs{\alpha} \le 2M}}
  \frac{(\lambda^{\abs \alpha}-1)\, H_\alpha \, \skal[\big]{H_\alpha}{p}_\infty}{2^{\abs \alpha} \alpha!}
  \preccurlyeq
  \sum_{n=0}^{M+\lfloor d / 2\rfloor} \frac{ (x\cdot x)^n }{ (n!)^t }
\end{equation}
for $\lambda \in {[0,1[}$ sufficiently close to $1$, and then in Section~\ref{sec:secondEstimate} that
\begin{equation}
  \label{eq:secondEstimate}
  \sum_{\substack{\alpha\in \NN_0^d \\ 2N < \abs{\alpha}\le 4N}}
  \skal[\big]{ \coefficient{\Kernel_{N,\lambda}}{\alpha} }{ p }_\infty \, x^\alpha
  \preccurlyeq
  \sum_{n=N-\lfloor d/2 \rfloor}^{2N+ \lceil d/2 \rceil} \frac{ (x\cdot x)^n }{ (n!)^t }
\end{equation}
for sufficiently large $N\in \NN_0$. We will obtain explicit estimates for these parameters $\lambda$ and $N$,
which then prove Theorem~\ref{theorem:effective} for $\epsilon = 1$ (the case of general $\epsilon$ follows by rescaling).

\begin{remark}
  Heuristically, we can expect such estimates \eqref{eq:firstEstimate} and \eqref{eq:secondEstimate} to exist:
  Expanding $p$ in the basis of Hermite polynomials shows that
  the left-hand side of \eqref{eq:firstEstimate} is a diagonal linear operator on $\RR[x_1,\dots,x_d]$ with eigenvalues
  $\lambda^\alpha-1$. As $\Kernel_{N,\lambda,\infty}$ converges for $N\to \infty$ against the full Mehler kernel,
  the higher-order error-terms that are described by the left-hand side of \eqref{eq:secondEstimate} should approach zero;
  more precisely, we will see that the dominating term is the integral of $(x\cdot \xi)^n / n!$ over a Gaussian measure,
  which grows like $\sim 2^{-n/2}(x\cdot x)^{n/2} \sqrt{n!} / n! = 2^{-n/2}/\sqrt{n!}$, see \eqref{eq:GaussianIntegralEstimate}.
  The reason for treating the $(\xi\cdot \xi)\lambda^2$-component in the truncated Mehler kernel \eqref{eq:KNDef} differently
  than the $(x\cdot x)\lambda^2 - 2(x\cdot \xi) \lambda$-component is to make this second estimate work; we can integrate
  the whole Gaussian factor $\exp\bigl(-(\xi\cdot\xi) \lambda^2 / (1-\lambda^2)\bigr)$ even though integrating the individual
  parts $(\xi \cdot \xi)^n / n!$ of its series-expansion can result in a not absolutely convergent series.
\end{remark}

The bounds for admissible $\lambda$ and $N$ in these two estimates \eqref{eq:firstEstimate} and \eqref{eq:secondEstimate}
necessarily depend on $p$. We will express this dependency by utilizing a norm on polynomials that is easy to compute from the coefficients:

\begin{definition} \label{definition:norm}
  We define the norm $\norm{\argument} \colon \RR[x_1,\dots,x_d] \to {[0,\infty[}$,
  \begin{equation*}
    p \mapsto \norm{p} \coloneqq \sum_{\alpha \in \NN_0^d} \abs[\big]{ \coefficient{p}{\alpha} } (\alpha !)^{1/2}
    .
  \end{equation*}
\end{definition}
Even though $\norm{\argument}$ is not the norm induced by one of the inner products $\skal{\argument}{\argument}_\rad$
with $\rad \in {[0,\infty]}$, we can still prove the following:

\begin{proposition} \label{proposition:innerproductAndNorm}
  For all $r\in {[0,\infty]}$ and all $p \in \RR[x_1,\dots,x_d]$ the estimate
  \begin{equation}
     \label{eq:innerproductAndNorm}
    \skal{p}{p}^{1/2}_\rad
    \le
    \norm{p}
  \end{equation}
  holds.
\end{proposition}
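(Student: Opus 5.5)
The estimate follows from the triangle inequality for the seminorm $\skal{\argument}{\argument}^{1/2}_\rad$ together with the monomial bound \eqref{eq:monomialInnerproducts}.

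First fix $\rad \in {[0,\infty]}$ and recall that $\skal{\argument}{\argument}_\rad$ is a positive semidefinite symmetric bilinear form on polynomials. For $\rad=\infty$ this uses that polynomials are square integrable against the Gaussian weight. Hence $q \mapsto \skal{q}{q}^{1/2}_\rad$ is a seminorm, and Minkowski's inequality holds for it; this is a consequence of Cauchy--Schwarz.

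Next write $p = \sum_{\alpha} \coefficient{p}{\alpha} x^\alpha$, which is a finite sum. Minkowski's inequality gives
\begin{equation*}
  \skal{p}{p}^{1/2}_\rad
  \le
  \sum_{\alpha \in \NN_0^d} \abs[\big]{\coefficient{p}{\alpha}}\,\skal{x^\alpha}{x^\alpha}^{1/2}_\rad .
\end{equation*}
By \eqref{eq:monomialInnerproducts} we have $\skal{x^\alpha}{x^\alpha}^{1/2}_\rad \le (\alpha!)^{1/2}$ for every $\rad\in{[0,\infty]}$. Substituting this bound, the right-hand side is at most $\sum_\alpha \abs{\coefficient{p}{\alpha}}(\alpha!)^{1/2} = \norm{p}$, as in Definition~\ref{definition:norm}.

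There is no real obstacle. The only point requiring care is the case $\rad=\infty$: there one notes that $\skal{\argument}{\argument}_\infty$ is given by the limit in \eqref{eq:skalInftyDef}, which exists for polynomials. Alternatively, apply the finite-$\rad$ estimate and pass to the limit $\rad\to\infty$, using \eqref{eq:monotoneInnerproduct}.
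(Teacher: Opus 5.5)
Your proof is correct and follows the paper's argument exactly. Like the paper, you apply the triangle inequality for the seminorm $p \mapsto \skal{p}{p}_\rad^{1/2}$ to the monomial expansion of $p$, and then bound each $\skal{x^\alpha}{x^\alpha}_\rad$ by $\alpha!$ using \eqref{eq:monomialInnerproducts}.
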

\begin{proof}
  Indeed,
  $\skal{p}{p}_\rad^{1/2}
  \le
  \sum_{\alpha \in\NN_0^d} \abs[\big]{\coefficient{p}{\alpha}} \skal{x^\alpha}{x^\alpha}^{1/2}_\rad
  \le
  \sum_{\alpha \in\NN_0^d} \abs[\big]{\coefficient{p}{\alpha}} (\alpha!)^{1/2}
  =
  \norm{p}$,
  where the first inequality holds because the map $\RR[x_1,\dots,x_d] \ni p \mapsto \skal{p}{p}_\rad^{1/2} \in {[0,\infty[}$ is a seminorm,
  and the second inequality follows from \eqref{eq:monomialInnerproducts}.
\end{proof}

\subsection{The first estimate} \label{sec:firstEstimate}
Recall that
\begin{equation}
  \label{eq:theMaximalBinomial}
  \binom{k}{\ell} \le \binom{k}{\lfloor k/2 \rfloor}
  \quad\quad\text{for all $k\in \NN_0$ and $\ell \in \{0,\dots,k\}$,}
\end{equation}
which follows from the observation that $\binom{k}{\ell} / \binom{k}{\ell-1} = (k-\ell+1)/ \ell > 1$
if and only if $2\ell \le k$, or equivalently, $\ell \le \lfloor k/2 \rfloor$.
For these maximal binomial coefficients there is a simple formula:

\begin{lemma} \label{lemma:maximalBinomial}
  Let $n\in \NN_0$, then
  \begin{equation}
    \label{eq:maximalBinomial}
    \binom{n}{\lfloor n/2 \rfloor}
    =
    2^n \prod_{j=1}^{\lceil n/2 \rceil} \frac{2j-1}{2j}
    .
  \end{equation}
\end{lemma}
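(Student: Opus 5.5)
We prove \eqref{eq:maximalBinomial} by induction on $n$, comparing how both sides change from $n$ to $n+1$. For $n=0$ both sides equal $1$, since the product is empty.

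For the induction step we split according to the parity of $n$.

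If $n=2m$ is even, then $\lfloor (n+1)/2\rfloor = m$ and $\lceil (n+1)/2\rceil = m+1$. On the left-hand side,
\begin{equation*}
  \binom{2m+1}{m} = \frac{2m+1}{m+1}\binom{2m}{m}.
\end{equation*}
On the right-hand side, the power of $2$ doubles and one new factor $(2m+1)/(2m+2)$ enters the product. The total ratio is therefore
\begin{equation*}
  2\cdot\frac{2m+1}{2m+2} = \frac{2m+1}{m+1},
\end{equation*}
which matches the left-hand side.

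If $n=2m+1$ is odd, then $\lfloor (n+1)/2\rfloor = m+1$ and $\lceil (n+1)/2\rceil = m+1 = \lceil n/2\rceil$, so the product is unchanged. On the left-hand side,
\begin{equation*}
  \binom{2m+2}{m+1} = \frac{2m+2}{m+1}\binom{2m+1}{m} = 2\binom{2m+1}{m},
\end{equation*}
using $\binom{2m+1}{m+1}=\binom{2m+1}{m}$. On the right-hand side only the factor $2$ from $2^{n+1}=2\cdot 2^n$ appears, so both sides again change by the same ratio.

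As an alternative, one can use the closed form $\binom{2m}{m} = 2^{2m}\prod_{j=1}^m \frac{2j-1}{2j}$. It follows from $(2m)! = 2^m m!\prod_{j=1}^m (2j-1)$, and the odd case then follows from $\binom{2m+1}{m} = \tfrac12\binom{2m+2}{m+1}$.

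The only point needing care is the floor and ceiling bookkeeping in the two parity cases; the rest is routine arithmetic with binomial ratios.
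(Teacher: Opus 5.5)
Your proposal is correct and follows essentially the same route as the paper: induction from $n=0$ with a parity split, using the ratios $\binom{2m+1}{m}=\frac{2m+1}{m+1}\binom{2m}{m}$ and $\binom{2m+2}{m+1}=2\binom{2m+1}{m}$, matched against the new factor $2\cdot\frac{2m+1}{2m+2}$ (even case) or the bare factor $2$ (odd case) on the right-hand side. Your alternative derivation via $(2m)! = 2^m m!\prod_{j=1}^m(2j-1)$ is also valid, but it is not needed.
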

\begin{proof}
  \eqref{eq:maximalBinomial} certainly holds for $n=0$, and if \eqref{eq:maximalBinomial} holds for one $n\in \NN_0$,
  then also for $n+1$ in place of $n$. To see this, we treat the cases of odd and even $n$ separately:
  If $n$ is even, i.e., $n=2k$ for some $k \in \NN_0$, then
  \begin{equation*}
    \binom{n+1}{\lfloor (n+1)/2 \rfloor}
    =
    \binom{2k+1}{k}
    =
    \binom{2k}{k} \frac{2k+1}{k+1}
    =
    \biggl( 2^{2k} \prod\nolimits_{j=1}^k\frac{2j-1}{2j} \biggr) \frac{2k+1}{k+1}
    =
    2^{2k+1} \prod_{j=1}^{k+1}\frac{2j-1}{2j}
    .
  \end{equation*}
  If $n$ is odd, i.e., $n=2k+1$ for some $k \in \NN_0$, then
  \begin{equation*}
    \binom{n+1}{\lfloor (n+1)/2 \rfloor}
    =
    \binom{2k+2}{k+1}
    =
    \binom{2k+1}{k} \frac{2k+2}{k+1}
    =
    \biggl( 2^{2k+1} \prod\nolimits_{j=1}^{k+1}\frac{2j-1}{2j} \biggr) \frac{2k+2}{k+1}
    =
    2^{2k+2} \prod_{j=1}^{k+1}\frac{2j-1}{2j}
    .
  \end{equation*}
  By induction \eqref{eq:maximalBinomial} holds for all $n\in \NN_0$.
\end{proof}

\begin{proposition} \label{proposition:maximalBinomials}
  Suppose $n\in \NN_0$ and $k,\ell \in \{0,\dots,n\}$. Then
  \begin{equation}
    \binom{n}{k} \le 2^{n-\ell} \binom{\ell}{\lfloor \ell / 2 \rfloor}
    .
  \end{equation}
\end{proposition}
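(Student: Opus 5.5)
The plan is to prove the estimate by induction on $n-\ell$, combined with the maximality statement \eqref{eq:theMaximalBinomial}. The base case $\ell = n$ is exactly \eqref{eq:theMaximalBinomial}: $\binom{n}{k} \le \binom{n}{\lfloor n/2\rfloor} = 2^{0}\binom{n}{\lfloor n/2\rfloor}$.

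For the inductive step, fix $n$ and assume the claim holds for $\ell+1$, with $\ell+1 \le n$. It then suffices to show
\begin{equation*}
  \binom{\ell+1}{\lfloor (\ell+1)/2\rfloor} \le 2 \binom{\ell}{\lfloor \ell/2\rfloor},
\end{equation*}
because then $\binom{n}{k} \le 2^{n-\ell-1}\binom{\ell+1}{\lfloor(\ell+1)/2\rfloor} \le 2^{n-\ell}\binom{\ell}{\lfloor \ell/2\rfloor}$.

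This auxiliary inequality follows from Lemma~\ref{lemma:maximalBinomial}. The ratio of the two sides of \eqref{eq:maximalBinomial} at $\ell+1$ and $\ell$ is
\begin{equation*}
  2\prod_{j=\lceil \ell/2\rceil+1}^{\lceil(\ell+1)/2\rceil}\frac{2j-1}{2j}.
\end{equation*}
The product is either empty (when $\ell$ is odd) or a single factor $(2j-1)/(2j) \le 1$ (when $\ell$ is even), so the ratio is at most $2$. Alternatively, one can avoid the lemma and use Pascal's rule directly: $\binom{\ell+1}{m} = \binom{\ell}{m} + \binom{\ell}{m-1} \le 2\binom{\ell}{\lfloor \ell/2\rfloor}$ by \eqref{eq:theMaximalBinomial}.

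No step is a real obstacle. The only care needed is the parity bookkeeping of the floors and ceilings, and the Pascal-rule route sidesteps even that.
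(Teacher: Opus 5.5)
Your proof is correct and is essentially the paper's argument. The paper writes $\binom{n}{\lfloor n/2\rfloor}$ via Lemma~\ref{lemma:maximalBinomial} and removes the factor $\prod_{j=\lceil \ell/2\rceil+1}^{\lceil n/2\rceil}\frac{2j-1}{2j}\le 1$ in one step, which is your one-step-at-a-time induction telescoped. Your Pascal-rule variant is a genuinely more elementary alternative: it avoids Lemma~\ref{lemma:maximalBinomial} and needs only \eqref{eq:theMaximalBinomial}, with out-of-range coefficients read as $0$ (relevant only for $\ell=0$).
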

\begin{proof}
  This follows from the estimate
  \begin{equation*}
    \binom{n}{k}
    \le
    \binom{n}{\lfloor n/2 \rfloor}
    =
    2^n \prod_{j=1}^{\lceil n/2 \rceil} \frac{2j-1}{2j}
    =
    2^{n-\ell} \binom{\ell}{\lfloor \ell/2 \rfloor} \prod_{j=\lceil \ell/2 \rceil+1}^{\lceil n/2 \rceil} \frac{2j-1}{2j}
    \le
    2^{n-\ell} \binom{\ell}{\lfloor \ell/2 \rfloor}
  \end{equation*}
  where we first apply \eqref{eq:theMaximalBinomial} and then the previous Lemma~\ref{lemma:maximalBinomial}.
\end{proof}

\begin{proposition} \label{proposition:HermiteEstimates}
  The following estimate holds:
  \begin{equation}
    \label{eq:HermiteEstimates}
    -2^{\abs{\alpha}} \sqrt{\alpha !} \sum_{\substack{\beta \in \NN_0^d \\ \beta \le \lceil \alpha / 2 \rceil}} \frac{x^{2\beta}}{\beta !}
    \preccurlyeq
    H_\alpha
    \preccurlyeq
    2^{\abs{\alpha}} \sqrt{\alpha !} \sum_{\substack{\beta \in \NN_0^d \\ \beta \le \lceil \alpha / 2 \rceil}} \frac{x^{2\beta}}{\beta !}
    \quad\quad\text{for all $\alpha \in \NN_0^d$.}
  \end{equation}
\end{proposition}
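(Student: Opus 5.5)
First reduce to the univariate case with \eqref{eq:productOfBounds}. The paper sets $H_\alpha = \prod_j H_{\alpha_j}(x_j)$, and both $2^{\abs\alpha}\sqrt{\alpha!}$ and $\sum_{\beta\le\lceil\alpha/2\rceil} x^{2\beta}/\beta!$ factor over $j$; the latter equals $\prod_j \sum_{b=0}^{\lceil\alpha_j/2\rceil} x_j^{2b}/b!$. Each univariate bound is a sum of squares, so \eqref{eq:productOfBounds} combines them. Induction on the number of factors then gives the multivariate statement. It therefore suffices to show, for $n\in\NN_0$,
\begin{equation*}
  -2^n\sqrt{n!}\sum_{b=0}^{\lceil n/2\rceil}\frac{z^{2b}}{b!}
  \preccurlyeq H_n \preccurlyeq
  2^n\sqrt{n!}\sum_{b=0}^{\lceil n/2\rceil}\frac{z^{2b}}{b!}
  \quad\text{in }\RR[z].
\end{equation*}

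For the univariate bound, bound each term of \eqref{eq:HermiteExplicit1D} separately. A monomial $z^m$ with $m=n-2k$ is bounded in two ways. If $m$ is even, $\pm z^m \preccurlyeq z^m$, which is a square. If $m$ is odd, \eqref{eq:pseudoCS} with $\lambda=1$ gives $\pm z^m \preccurlyeq \frac12(z^{m-1}+z^{m+1})$. In both cases the exponents $2b$ that appear satisfy $2b\le m+1\le n+1$, so $b\le\lceil n/2\rceil$, as required. Then $H_n$ is dominated by
\begin{equation*}
  \sum_{k} \frac{n!\,2^{n-2k}}{k!\,(n-2k)!}\,S_{n-2k},
\end{equation*}
where $S_m$ denotes $z^m$ or $\frac12(z^{m-1}+z^{m+1})$. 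What remains is a coefficient comparison: the total weight each $z^{2b}$ receives must be at most $2^n\sqrt{n!}/b!$.

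The coefficient comparison is the main obstacle. For $m=n-2k$ even, the term contributes to $b=m/2$ and we need
\begin{equation*}
  \frac{n!\,2^{m}}{k!\,m!}\le 2^n\frac{\sqrt{n!}}{(m/2)!},
  \quad\text{i.e.}\quad
  \sqrt{n!}\,(m/2)!\le 2^{2k}\,k!\,m!.
\end{equation*}
To handle this, write $n!=m!\,(2k)!\binom{n}{m}$ and use $\binom{n}{m}\le 2^n$, $(2k)!\le 4^k (k!)^2$ and $((m/2)!)^2\le m!$. This reduces the claim to $2^{n/2}\sqrt{m!}\,2^k k!\le 2^{2k}k!\,m!$. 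That inequality is not quite right as stated, so the constants need care. The better route is to sum over $k$ and use $\sum_k 2^{-k}$-type slack, together with Proposition~\ref{proposition:maximalBinomials} to bound $\binom{n}{2k}$ by $2^{n-\ell}\binom{\ell}{\lfloor\ell/2\rfloor}$ with $\ell$ chosen so that the binomials cancel against the factorials. For odd $m$ the same estimate applies with $b=(m\pm1)/2$; the factor $\frac12$ and the neighbouring factorials $(m\pm1)/2)!$ compare to $\sqrt{m!}$ up to a factor $\sqrt{m+1}$.

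If necessary, bounding $\frac{2^{m}n!}{k!\,m!\,\sqrt{n!}}$ uniformly and then using $\sum_b$ multiplicities (at most two $k$ per $b$) will give the factor $2^n$. I expect this to fit within $2^n$, since that constant is generous: $\abs{H_n}$ is typically of size $2^{n/2}\sqrt{n!}$. Verifying the small cases $n\le 3$ directly will confirm that the constant suffices.
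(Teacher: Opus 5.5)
Your reduction to the univariate case via \eqref{eq:productOfBounds} matches the paper, and so does the plan of dominating $H_n$ term by term from \eqref{eq:HermiteExplicit1D} and then comparing coefficients of $z^{2b}$. The gap is that this coefficient comparison, which is the whole content of the proposition, is never carried out, and the plan you sketch for it does not work.

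In the even case ($n=2M$, monomial $z^{2K}$), each $z^{2K}$ comes from exactly one term, so there is no sum over $k$ whose slack you could exploit. Checking $n\le 3$ says nothing about general $n$. What is needed is the factorization
\[
\frac{\sqrt{(2M)!}}{4^{M-K}(M-K)!\,(2K)!}=\binom{2M-2K}{M-K}^{\!\frac12}\binom{2M}{2K}^{\!\frac12}\frac{1}{4^{M-K}\sqrt{(2K)!}} ,
\]
together with $\binom{2M-2K}{M-K}\le 4^{M-K}$ and $\binom{2M}{2K}\le 4^{M-K}\binom{2K}{K}$. The latter is Proposition~\ref{proposition:maximalBinomials} with $\ell=2K$. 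These give exactly $1/K!$.

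In the odd case, your choice $\lambda=1$ in \eqref{eq:pseudoCS} makes the coefficient comparison false. Let $n=2M+1$ and $a_K\coloneqq\sqrt{n!}/\bigl(4^{M-K}(M-K)!\,(2K+1)!\bigr)$, which is the coefficient of $\abs{z}^{2K+1}$ divided by $2^n\sqrt{n!}$. With the even split, $z^{2J}$ receives weight $\frac12(a_J+a_{J-1})$, and you need this to be at most $1/J!$. By Stirling, $a_{J-1}\,J!\approx(M/4\pi)^{1/4}$ for $J\approx M/2$, so the comparison fails for large $n$. For instance, $n=801$, $J=201$ gives $\frac12(a_{201}+a_{200})\,201!\approx 1.2$.

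The factor $\sqrt{m+1}$ you noticed cannot be absorbed. The even split ignores that the target weights $1/K!$ and $1/(K+1)!$ of $z^{2K}$ and $z^{2K+2}$ differ by the factor $K+1$. The missing idea is a degree-dependent $\lambda$, namely $\lambda=\sqrt{K+1}$. The paper first bounds $a_K\le 1/\sqrt{K!\,(K+1)!}$, using $\binom{2M-2K}{M-K}\le 4^{M-K}$ and $\binom{2M+1}{2K+1}\le 4^{M-K}\binom{2K+1}{K}$ from Proposition~\ref{proposition:maximalBinomials}. It then uses $\abs{z}\le\frac12\bigl(\sqrt{K+1}+z^2/\sqrt{K+1}\bigr)$, which is the AM--GM balanced split
\[
\frac{\abs{z}^{2K+1}}{\sqrt{K!\,(K+1)!}}\le\frac12\biggl(\frac{z^{2K}}{K!}+\frac{z^{2K+2}}{(K+1)!}\biggr).
\]
With this split, each $z^{2J}$ receives at most $\bigl(\frac12+\frac12\bigr)/J!$, and the bound closes exactly.
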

\begin{proof}
  By \eqref{eq:productOfBounds} it is sufficient to treat the univariate case only, i.e.,
  \begin{equation*}
    - 2^n \sqrt{n!} \sum_{k=0}^{\smash{\lceil n/2 \rceil}} \frac{z^{2k}}{k!}
    \preccurlyeq
    H_n
    \preccurlyeq
    2^n \sqrt{n!} \sum_{k=0}^{\smash{\lceil n/2 \rceil}} \frac{z^{2k}}{k!}
    \quad\quad\text{for all $n\in \NN_0$,}
  \end{equation*}
  where $\preccurlyeq$ is the partial order on $\RR[z]$ induced by the convex cone $\SOS{\RR[z]}$,
  or equivalently, by the convex cone of pointwise positive polynomials in $z$. This means we only have to show that
  \begin{equation*}
    \abs[\big]{H_n(\xi)} \le 2^n \sqrt{n!} \sum_{k=0}^{\smash{\lceil n/2 \rceil}} \frac{\xi^{2k}}{k!}
    \quad\quad\text{for all $\xi \in \RR$, $n\in \NN_0$.}
  \end{equation*}
  So let $n\in \NN_0$ and $\xi \in \RR$ be arbitrary. Then
  \begin{equation*}
    \abs[\big]{H_n(\xi)}
    \le
    n! \sum_{\ell=0}^{\smash{\lfloor n/2 \rfloor}} \frac{ \abs{2\xi}^{n-2\ell}}{\ell!\,(n-2\ell)!}
    =
    2^n \sqrt{n!} \sum_{\ell=0}^{\smash{\lfloor n/2 \rfloor}} \frac{ \sqrt{n!}\,\abs{\xi}^{n-2\ell} }{ 2^{2\ell}\, \ell! \,(n-2\ell)!}
  \end{equation*}
  by the definition \eqref{eq:HermiteExplicit1D} of $H_n$, so the proof is complete once we have shown that
  \begin{equation*}
    \sum_{\ell=0}^{\smash{\lfloor n/2 \rfloor}} \frac{ \sqrt{n!}\,\abs{\xi}^{n-2\ell} }{ 2^{2\ell}\, \ell! \,(n-2\ell)!}
    \le
    \sum_{k=0}^{\smash{\lceil n/2 \rceil}} \frac{\xi^{2k}}{k!}
    .
  \end{equation*}
  We treat the two cases of even and odd $n$ separately:
  If $n$ is even, say $n=2m$ for some $m\in \NN_0$, then
  \begin{align*}
    \sum_{\ell=0}^{m} \frac{ \sqrt{(2m)!}\,\xi^{2m-2\ell} }{ 2^{2\ell}\, \ell! \,(2m-2\ell)!}
    &=
    \sum_{k=0}^{m} \frac{ \sqrt{(2m)!}\,\xi^{2k} }{ 2^{2(m-k)}\, (m-k)! \,(2k)!}
    \\
    &=
    \sum_{k=0}^{m} \binom{2m-2k}{m-k}^{\!\frac12} \binom{2m}{2k}^{\!\frac12} \frac{\xi^{2k}}{2^{2(m-k)}\sqrt{(2k)!}}
    \\
    &\le
    \sum_{k=0}^{m} 2^{m-k}\, 2^{m-k}\,\binom{2k}{k}^{\!\frac12} \frac{\xi^{2k}}{2^{2(m-k)}\sqrt{(2k)!}}
    \\
    &=
    \sum_{k=0}^{m} \frac{\xi^{2k}}{k!}
  \end{align*}
  where we substitute $k\coloneqq m-\ell$ and use the standard estimate $\binom{2m-2k}{m-k} \le 2^{2m-2k}$
  and the estimate $\binom{2m}{2k} \le 2^{2m-2k} \binom{2k}{k}$ from Proposition~\ref{proposition:maximalBinomials}.
  If $n$ is odd, i.e., $n=2m +1$ for some $m\in \NN_0$, then
  \begin{align*}
    \sum_{\ell=0}^{m} \frac{ \sqrt{(2m+1)!}\,\abs{\xi}^{2(m-\ell)+1} }{ 2^{2\ell}\, \ell! \,(2(m-\ell)+1)!}
    &=
    \sum_{k=0}^{m} \frac{ \sqrt{(2m+1)!}\,\abs{\xi}^{2k+1} }{ 2^{2(m-k)}\, (m-k)! \,(2k+1)!}
    \\
    &=
    \sum_{k=0}^{m} \binom{2m-2k}{m-k}^{\!\frac12} \binom{2m+1}{2k+1}^{\!\frac12}
    \frac{ \abs{\xi}^{2k+1} }{ 2^{2(m-k)} \sqrt{(2k+1)!} }
    \\
    &\le
    \sum_{k=0}^{m} 2^{m-k} \, 2^{m-k}\,\binom{2k+1}{k}^{\!\frac12}
    \frac{ \abs{\xi}^{2k+1} }{ 2^{2(m-k)} \sqrt{(2k+1)!} }
    \\
    &=
    \sum_{k=0}^{m} \frac{ \abs{\xi}^{2k+1} }{ \sqrt{k!(k+1)!} }
    \\
    &\le
    \frac12\sum_{k=0}^{m} \biggl( \frac{\xi^{2k}}{k!} + \frac{\xi^{2(k+1)}}{(k+1)!} \biggr)
    \\
    &\le
    \sum_{k=0}^{m+1} \frac{\xi^{2k}}{k!}
  \end{align*}
  where we again substitute $k \coloneqq m-\ell$, use the standard estimate $\binom{2m-2k}{m-k} \le 2^{2m-2k}$
  and the estimate $\binom{2m+1}{2k+1} \le 2^{2m-2k} \binom{2k+1}{k}$ from Proposition~\ref{proposition:maximalBinomials},
  and then use $\abs{\xi} \le \frac12\bigl( \sqrt{k+1} + \xi^2 / \sqrt{k+1} \bigr)$, see \eqref{eq:pseudoCS}.
\end{proof}
We can now prove the first estimate \eqref{eq:firstEstimate}:

\begin{proposition} \label{proposition:firstEstimate}
  Let $M\in \NN_0$, $p\in \RR[x_1,\dots,x_d]_{2M}$, and $t\in {[0,1]}$. Then the estimate
  \begin{equation}
    \label{eq:proposition:firstEstimate:estimate}
    \sum_{\substack{\alpha \in \NN_0^d \\ \abs{\alpha} \le 2M}}
    \frac{(\lambda^{\abs{\alpha}}-1)\, H_\alpha\,\skal{H_\alpha}{p}_\infty }{2^{\abs \alpha} \alpha!}
    \preccurlyeq
    \sum_{n = 0}^{M+\lfloor d/2 \rfloor} \frac{(x\cdot x)^n}{n!}
    \preccurlyeq
    \sum_{n = 0}^{M+\lfloor d/2 \rfloor} \frac{(x\cdot x)^n}{(n!)^t}
  \end{equation}
  holds for all $\lambda \in {[0,1]}$ that fulfill
  \begin{equation}
    \label{eq:proposition:firstEstimate:lambda}
    \bigl(1-\lambda^{2M}\bigr)
    \binom{2M+d-1}{d-1}^{\!\frac12} 2^{M+\frac12} \norm{p}
    \le
    1
    .
  \end{equation}
\end{proposition}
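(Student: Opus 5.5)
The plan is to bound each summand on the left-hand side of \eqref{eq:proposition:firstEstimate:estimate} separately in the order $\preccurlyeq$, using Proposition~\ref{proposition:HermiteEstimates}, and then to collect the resulting sums of monomials $x^{2\beta}/\beta!$ into powers of $(x\cdot x)$ via the multinomial identity \eqref{eq:multinomial}. The second inequality in \eqref{eq:proposition:firstEstimate:estimate} is immediate: $(n!)^t\le n!$ for $t\in[0,1]$, and $(x\cdot x)^n$ is a sum of squares. So the work is in the first inequality.

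\textbf{Bounding a single summand.} For $\abs\alpha\le 2M$ the scalar factor is
$c_\alpha\coloneqq(\lambda^{\abs\alpha}-1)\skal{H_\alpha}{p}_\infty/(2^{\abs\alpha}\alpha!)$. By Cauchy--Schwarz, \eqref{eq:HermiteOrthoMultiD} and Proposition~\ref{proposition:innerproductAndNorm},
\begin{equation*}
  \abs{\skal{H_\alpha}{p}_\infty}\le 2^{\abs\alpha/2}\sqrt{\alpha!}\,\norm{p}.
\end{equation*}
Since $0\le 1-\lambda^{\abs\alpha}\le 1-\lambda^{2M}$, this gives
$\abs{c_\alpha}\le (1-\lambda^{2M})\,2^{-\abs\alpha/2}(\alpha!)^{-1/2}\norm{p}$. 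Multiplying \eqref{eq:HermiteEstimates} by $\abs{c_\alpha}$ then yields
\begin{equation*}
  c_\alpha H_\alpha\preccurlyeq (1-\lambda^{2M})\,2^{\abs\alpha/2}\norm{p}\sum_{\beta\le\lceil\alpha/2\rceil}\frac{x^{2\beta}}{\beta!}.
\end{equation*}
In this bound $2^{\abs\alpha/2}\le 2^M$, and every $\beta$ that occurs satisfies $\abs\beta\le\abs{\lceil\alpha/2\rceil}\le M+\lfloor d/2\rfloor$. Indeed, $\abs{\lceil\alpha/2\rceil}\le(\abs\alpha+\#\{j:\alpha_j\text{ odd}\})/2$, and the number of odd entries has the same parity as $\abs\alpha$ and is at most $d$; this parity argument is what produces the $\lfloor d/2\rfloor$.

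\textbf{Summing over $\alpha$ (the main obstacle).} After summing over all $\alpha$ with $\abs\alpha\le2M$, the coefficient in front of a fixed monomial $x^{2\beta}/\beta!$ is $(1-\lambda^{2M})2^M\norm{p}$ times the number of $\alpha$ with $\abs\alpha\le 2M$ and $\lceil\alpha/2\rceil\ge\beta$. A crude bound on this count would not give \eqref{eq:proposition:firstEstimate:lambda}, which contains only a square root of $\binom{2M+d-1}{d-1}$. So instead of bounding each $\abs{c_\alpha}$ individually, I will first use Cauchy--Schwarz in $\alpha$: expand $p$ in the Hermite basis \eqref{eq:orthobasis} and note that $\sum_\alpha\skal{H_\alpha}{p}_\infty^2/(2^{\abs\alpha}\alpha!)=\skal{p}{p}_\infty\le\norm{p}^2$ by Parseval. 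Grouping the $\alpha$ by degree $k=\abs\alpha$ and using that there are $\binom{k+d-1}{d-1}\le\binom{2M+d-1}{d-1}$ multi-indices of degree $k$, this Parseval step should produce exactly one factor $\binom{2M+d-1}{d-1}^{1/2}$. The remaining sum over $\beta$ is handled with \eqref{eq:multinomial}: $\sum_{\abs\beta=n}x^{2\beta}/\beta!=(x\cdot x)^n/n!$. The extra factor $\sqrt2$, which gives the exponent $2^{M+\frac12}$, should come from the two parity classes of degrees, or from summing a geometric series in $2^{-1/2}$. Under \eqref{eq:proposition:firstEstimate:lambda} everything is then bounded by $\sum_{n\le M+\lfloor d/2\rfloor}(x\cdot x)^n/n!$.

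I expect this bookkeeping to be the hardest step: matching the multiplicity count to the precise constant $\binom{2M+d-1}{d-1}^{1/2}2^{M+\frac12}$. If the Parseval route does not close, the fallback is to allow a slightly larger constant, since only the structure of the estimate matters for Theorem~\ref{theorem:effective}.
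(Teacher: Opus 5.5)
Your route is the paper's. You apply Proposition~\ref{proposition:HermiteEstimates} termwise and enlarge every $\beta$-range to $\abs\beta\le M+\lfloor d/2\rfloor$, so the whole sum is one scalar times $\sum_{n\le M+\lfloor d/2\rfloor}(x\cdot x)^n/n!$. You then control that scalar by Cauchy--Schwarz combined with Parseval. What you leave open is exactly the step that fixes the constant in \eqref{eq:proposition:firstEstimate:lambda}. Neither of your two explanations for the extra $\sqrt2$ is what actually happens, and one of them would not give the stated condition.

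To close it, postpone estimating $\abs{\skal{H_\alpha}{p}_\infty}$; the termwise bound from your first paragraph should be dropped. After enlarging the $\beta$-ranges, the scalar is $\sum_{\abs\alpha\le 2M}(1-\lambda^{\abs\alpha})\abs{\skal{H_\alpha}{p}_\infty}/\sqrt{\alpha!}$. Split it as $2^{\abs\alpha/2}\cdot\abs{\skal{H_\alpha}{p}_\infty}/(2^{\abs\alpha/2}\sqrt{\alpha!})$ and apply Cauchy--Schwarz once over the whole index set $\abs\alpha\le 2M$:
\begin{equation*}
  \sum_{\abs\alpha\le 2M}\frac{\abs{\skal{H_\alpha}{p}_\infty}}{\sqrt{\alpha!}}
  \le
  \Bigl(\sum_{\abs\alpha\le 2M}2^{\abs\alpha}\Bigr)^{\frac12}
  \Bigl(\sum_{\abs\alpha\le 2M}\frac{\skal{H_\alpha}{p}_\infty^2}{2^{\abs\alpha}\alpha!}\Bigr)^{\frac12}
  \le
  \Bigl(\sum_{k=0}^{2M}\binom{k+d-1}{d-1}2^k\Bigr)^{\frac12}\norm{p}.
\end{equation*}
Then $\binom{k+d-1}{d-1}\le\binom{2M+d-1}{d-1}$ and $\sum_{k=0}^{2M}2^k<2^{2M+1}$ give exactly $\binom{2M+d-1}{d-1}^{1/2}2^{M+\frac12}$. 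The $\sqrt2$ is just the square root of the factor $2$ in that geometric sum; parity of degrees plays no role.

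If you instead apply Cauchy--Schwarz degree by degree, bound each degree's Parseval block by $\norm{p}$, and then sum $2^{k/2}$ (your ``geometric series in $2^{-1/2}$''), you get $\sum_{k=0}^{2M}2^{k/2}=(\sqrt2+1)(2^{M+\frac12}-1)$ in place of $2^{M+\frac12}$. For $M\ge1$ this is too large to yield \eqref{eq:proposition:firstEstimate:lambda}. Likewise, your fallback of accepting a larger constant would prove a weaker proposition than the one stated.
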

\begin{proof}
  Clearly $1/n! \le 1/(n!)^t$ for all $n \in \NN_0$ and $t\in {[0,1]}$, so the second inequality in
  \eqref{eq:proposition:firstEstimate:estimate} certainly holds and we only have to address the first one.
  Let $\lambda \in {[0,1]}$.
  From Proposition~\ref{proposition:HermiteEstimates} and the multinomial theorem \eqref{eq:multinomial} it follows that
  \begin{align*}
    \sum_{\substack{\alpha \in \NN_0^d \\ \abs{\alpha}\le 2M}} \frac{(\lambda^{\abs{\alpha}}-1)\, H_\alpha\, \skal{H_\alpha}{p}_\infty }{2^{\abs \alpha} \alpha!}
    &\preccurlyeq
    \sum_{\substack{\alpha \in \NN_0^d \\ \abs{\alpha}\le 2M}} \abs[\bigg]{ \frac{(\lambda^{\abs{\alpha}}-1) \,\skal{H_\alpha}{p}_\infty }{ \sqrt{\alpha!} } }
    \sum_{\substack{\beta \in \NN_0^d \\ \beta\le \lceil \alpha/2\rceil}} \frac{x^{2\beta}}{\beta!}
    \\
    &\preccurlyeq
    \bigl(1-\lambda^{2M}\bigr)
    \sum_{\substack{\alpha \in \NN_0^d \\ \abs{\alpha}\le 2M}} \frac{\abs[\big]{\skal{H_\alpha}{p}_\infty} }{ \sqrt{\alpha!}}
    \sum_{\substack{\beta \in \NN_0^d \\ \abs\beta\le M+\lfloor d/2 \rfloor}} \frac{x^{2\beta}}{\beta!}
    \\
    &=
    \bigl(1-\lambda^{2M}\bigr)
    \sum_{\substack{\alpha \in \NN_0^d \\ \abs{\alpha}\le 2M}} \frac{\abs[\big]{\skal{H_\alpha}{p}_\infty} }{ \sqrt{\alpha!}}
    \sum_{n=0}^{M+\lfloor d/2 \rfloor} \frac{(x\cdot x)^n}{n!}
    ,
  \end{align*}
  where we use that $\abs{\lceil \alpha / 2 \rceil} \le \abs\alpha/2 + d/2$. By the Cauchy--Schwartz inequality,
  \begin{equation*}
    \sum_{\substack{\alpha \in \NN_0^d \\ \abs{\alpha}\le 2M}} \frac{\abs[\big]{\skal{H_\alpha}{p}_\infty} }{ \sqrt{\alpha!}}
    =
    \sum_{\substack{\alpha \in \NN_0^d \\ \abs{\alpha}\le 2M}} 2^{\abs\alpha/2}\,\frac{\abs[\big]{\skal{H_\alpha}{p}_\infty} }{ 2^{\abs\alpha/2}\sqrt{\alpha!}}
    \le
    \biggl(\sum\nolimits_{\substack{\alpha \in \NN_0^d \\ \abs{\alpha}\le 2M}} 2^{\abs\alpha} \biggr)^{\!\frac12}
    \biggl(\sum\nolimits_{\substack{\alpha \in \NN_0^d \\ \abs{\alpha}\le 2M}} \frac{\skal{H_\alpha}{p}_\infty^2 }{ 2^{\abs\alpha}\alpha!} \biggr)^{\!\frac12}
    .
  \end{equation*}
  By \eqref{eq:orthobasis}, the second factor equals $\skal{p}{p}_\infty^{1/2}$,
  and $\skal{p}{p}_\infty^{1/2} \le \norm{p}$ by Proposition~\ref{proposition:innerproductAndNorm}.
  For the first factor, note that
  $\smash{\sum_{\alpha \in \NN_0^d, \abs{\alpha}\le 2M} 2^{\abs{\alpha}} \le \binom{2M+d-1}{d-1} 2^{2M+1}}$,
  because, by the stars and bars formula, $\smash{N_k \coloneqq \binom{k+d-1}{d-1}}$ is the number of multi-indices $\alpha \in\NN_0^d$
  with $\abs{\alpha} = k\in\NN_0$, and clearly $N_k \le N_{k'}$ for $k,k' \in \NN_0$ with $k\le k'$, so that
  \begin{equation*}
    \sum_{\substack{\alpha \in \NN_0^d \\ \abs{\alpha}\le 2M}} 2^{\abs{\alpha}}
    =
    \sum_{k=0}^{\smash{2M}} N_k\, 2^k \le N_{2M} \sum_{k=0}^{\smash{2M}} 2^k \le N_{2M} \,2^{2M+1} = \binom{2M+d-1}{d-1} \,2^{2M+1}
    .
  \end{equation*}
  Combining these estimates yields
  \begin{equation*}
    \sum_{\substack{\alpha \in \NN_0^d \\ \abs{\alpha}\le 2M}}
    \!\!\!\!\frac{(\lambda^{\abs{\alpha}}-1)\,H_\alpha\, \skal{H_\alpha}{p}_\infty }{2^{\abs \alpha} \alpha!}
    \preccurlyeq
    \bigl(1-\lambda^{2M}\bigr)
    \binom{2M+d-1}{d-1}^{\!\frac12} 2^{M+\frac12} \norm{p}
    \!\sum_{n=0}^{M+\lfloor d/2 \rfloor} \!\frac{(x\cdot x)^n}{n!}
    \preccurlyeq
    \!\sum_{n=0}^{M+\lfloor d/2 \rfloor} \!\frac{(x\cdot x)^n}{n!}
  \end{equation*}
  provided that \eqref{eq:proposition:firstEstimate:lambda} is fulfilled.
\end{proof}
\begin{corollary} \label{corollary:firstEstimate}
  Let $M\in \NN_0$, $p\in \RR[x_1,\dots,x_d]_{2M}$, and $t\in {[0,1]}$.
  Like in Theorem~\ref{theorem:effective}, set
  \begin{equation}
    \label{eq:firstEstimate:mu}
    \mu \coloneqq M\,\binom{2M+d-1}{d-1}^{\!\frac{1}{2}} 2^{M+\frac12}.
  \end{equation}
  If $\mu\,\norm{p} > 1$, then
  \begin{equation}
    \label{eq:corollary:firstEstimate:estimate}
    \sum_{\substack{\alpha \in \NN_0^d \\ \abs{\alpha} \le 2M}} \frac{(\lambda^{\abs{\alpha}}-1)\, H_\alpha\, \skal{H_\alpha}{p}_\infty }{2^{\abs \alpha} \alpha!}
    \preccurlyeq
    \sum_{n = 0}^{M+\lfloor d/2 \rfloor} \frac{(x\cdot x)^n}{(n!)^t}
    \quad\quad\text{for}\quad\quad
    \lambda
    \coloneqq
    \biggl(1 - \frac{1}{\mu\,\norm{p}}\biggr)^{\frac{1}{2}}
    .
  \end{equation}
\end{corollary}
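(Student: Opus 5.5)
This follows from Proposition~\ref{proposition:firstEstimate}: it suffices to check that the chosen $\lambda$ lies in ${[0,1]}$ and satisfies \eqref{eq:proposition:firstEstimate:lambda}. Since $\mu\norm{p}>1$, we have $0 < 1 - 1/(\mu\norm{p}) < 1$, so $\lambda$ is well defined and $\lambda\in{]0,1[}$. (Note that $\mu\norm{p}>1$ forces $M\ge 1$, since otherwise $\mu = 0$.) Writing $\mu = M\mu_0$ with $\mu_0 \coloneqq \binom{2M+d-1}{d-1}^{1/2} 2^{M+\frac12}$, condition \eqref{eq:proposition:firstEstimate:lambda} reads
\begin{equation*}
  \bigl(1-\lambda^{2M}\bigr)\,\mu_0\,\norm{p} \le 1 .
\end{equation*}

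The key step is Bernoulli's inequality. With $s \coloneqq \lambda^2 = 1 - 1/(\mu\norm{p}) \in {[0,1]}$ we have $s^M \ge 1 - M(1-s)$, since $(1-u)^M \ge 1-Mu$ for $u\in{[0,1]}$ and $M\in\NN$. Hence
\begin{equation*}
  1-\lambda^{2M} = 1 - s^M \le M(1-s) = \frac{M}{\mu\,\norm{p}} = \frac{1}{\mu_0\,\norm{p}} ,
\end{equation*}
which is exactly \eqref{eq:proposition:firstEstimate:lambda}.

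Proposition~\ref{proposition:firstEstimate} then gives the first and second inequalities of \eqref{eq:proposition:firstEstimate:estimate} for this $\lambda$ and the given $t\in{[0,1]}$. Their composition (transitivity of $\preccurlyeq$) is \eqref{eq:corollary:firstEstimate:estimate}. There is no real obstacle; the only point needing care is the Bernoulli step, which is where the factor $M$ in the definition of $\mu$ comes from.
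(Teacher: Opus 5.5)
Your proposal is correct and follows the paper's proof: both reduce to condition \eqref{eq:proposition:firstEstimate:lambda} of Proposition~\ref{proposition:firstEstimate} and verify it via $(1-\eta)^M \ge 1-M\eta$ with $\eta = 1/(\mu\,\norm{p})$. The paper obtains this inequality from convexity of $\eta\mapsto(1-\eta)^M$ (the tangent line at $0$) rather than naming it Bernoulli's inequality. Your observation that $\mu\,\norm{p}>1$ already forces $M\ge 1$ is slightly cleaner than the paper's separate treatment of $M=0$.
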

\begin{proof}
  By the previous Proposition~\ref{proposition:firstEstimate} we only have to show that
  $(1-\lambda^{2M})(\mu/M) \norm{p} \le 1$,
  or equivalently that $1 - M / (\mu\,\norm{p}) \le \lambda^{2M}$, where
  $\lambda^{2M} = \smash{\bigl(1 - 1 / (\mu\,\norm{p}) \bigr)^M}$.
  This is trivially true if $M=0$.
  Otherwise we consider the smooth function ${]{-\infty},1[} \ni \eta \mapsto (1 - \eta )^M \in {]0,\infty[}$.
  Its first derivative is $-M(1-\eta)^{M-1}$ and its second derivative is $M(M-1)(1-\eta)^{M-2} \ge 0$.
  Therefore this function is convex and its tangent at $\eta = 0$ is $\RR \ni \eta \mapsto 1-M\eta \in \RR$,
  so $1-M\eta \le (1 - \eta )^M$ for all $\eta \in {]{-\infty},1[}$. The special case
  $\eta \coloneqq 1/(\mu\,\norm{p}) \in {]0,1[}$ yields $1 - M / (\mu\,\norm{p}) \le \smash{\bigl(1 - 1 / (\mu\,\norm{p}) \bigr)^M}$
  as required.
\end{proof}
This particular choice of $\lambda$ in Corollary~\ref{corollary:firstEstimate} has the pleasant property that
$1-\lambda^2 = 1/(\mu\,\norm{p})$.

\subsection{The second estimate} \label{sec:secondEstimate}
For simplicity we write
\begin{equation}
  \label{eq:TailDef}
  \Tail_{N,\lambda}(p)
  \coloneqq
  \sum_{\substack{\alpha \in \NN_0^d \\ 2N < \abs{\alpha} \le 4N}}
  \skal[\big]{ \coefficient{\Kernel_{N,\lambda}}{\alpha} }{ p }_\infty \, x^\alpha
\end{equation}
for $N \in \NN_0$, $\lambda \in {[0,1[}$, and $p\in \RR[x_1,\dots,x_d]$.
In this section we will provide an estimate for $\Tail_{N,\lambda}(p)$,
which appears as the tail of higher-order terms in Corollary~\ref{corollary:rad2infty}.

We will also use the shorthand
\begin{equation}
  \label{eq:CIntegral}
  C_\lambda(\alpha,\beta)
  \coloneqq
  \bigg(
    \frac{
      2^{\abs\alpha} \,\lfloor\frac{\alpha}{2}\rfloor! \, \lceil\frac{\alpha}{2}\rceil!
    }{
      (1-\lambda^2)^{\abs\alpha} \,(\alpha!)^2
    }
  \biggr)^{1/2}
  \int_{\RR^d}
  \xi^{\alpha+\beta}
  \exp\biggl( - \frac{(\xi \cdot \xi)}{1-\lambda^2} \biggr)
  \frac{\D^d\xi}{\sqrt{\pi(1-\lambda^2)}^d}
\end{equation}
for $\lambda \in {[0,1[}$ and $\alpha,\beta \in \NN_0^d$.
As $C_\lambda(\alpha,\beta)$ is a product of Gaussian integrals, its exact value can be computed using \eqref{eq:GaussianIntegral}.
In particular $C_\lambda(\alpha,\beta) \ge 0$ for all $\lambda \in {[0,1[}$ and $\alpha,\beta \in \NN_0^d$.

\begin{proposition} \label{proposition:TailExpansion}
  Let $M,N\in \NN_0$, $p\in \RR[x_1,\dots,x_d]_{2M}$, and $\lambda \in {[0,1[}$. Then
  \begin{equation*}
    \begin{split}
      \Tail_{N,\lambda}&(p)
      =
      \\
      &=
      \sum_{n=N+1}^{2N}
      \!2^n\!
      \sum_{k=0}^{2n-2N-1}
      \sum_{\substack{\alpha\in \NN_0^d \\ \abs{\alpha} = k}}
      \sum_{\substack{\beta\in \NN_0^d \\ \abs{\beta} \le 2M}}
      \frac{\bigl(-(x\cdot x)\bigr)^{n-k}}{(n-k)!}
      \biggl(\frac{\lambda^2}{2(1-\lambda^2)}\biggr)^{n-k/2}
      \frac{x^\alpha}{\bigl(\lfloor\frac{\alpha}{2}\rfloor!\,\lceil\frac{\alpha}{2}\rceil!\bigr)^{1/2}}\,
      \coefficient{p}{\beta}\,C_\lambda(\alpha,\beta)
      .
    \end{split}
  \end{equation*}
  In particular if $\lambda = 0$, then $\Tail_{N,\lambda}(p) = 0$.
\end{proposition}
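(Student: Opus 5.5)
The plan is to compute $\Tail_{N,\lambda}(p)$ directly from the definitions \eqref{eq:KNDef}, \eqref{eq:expNDef} and \eqref{eq:TailDef}. Expand the truncated exponential $\exp_N$ into its finitely many terms, then use the binomial theorem for the two summands $-(x\cdot x)\lambda^2$ and $2\lambda(x\cdot\xi)$, and the multinomial theorem \eqref{eq:multinomial} for $(x\cdot\xi)^k$. Since everything is a finite sum, integrating against $p(\xi)\exp(-(\xi\cdot\xi))$ can be done term by term. Each resulting integral is a polynomial times a Gaussian, so it converges absolutely, and no interchange issue of the type in Proposition~\ref{proposition:rad2infty} arises.

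Concretely, I write
\begin{equation*}
  \Kernel_{N,\lambda}(\xi)
  =
  \bigl(1-\lambda^2\bigr)^{-\frac d2}
  \sum_{n=0}^{2N}\sum_{k=0}^{n}
  \frac{\bigl(-(x\cdot x)\lambda^2\bigr)^{n-k}\,\bigl(2\lambda\bigr)^k}{(n-k)!\,(1-\lambda^2)^n}
  \sum_{\substack{\alpha\in\NN_0^d\\ \abs\alpha = k}} \frac{x^\alpha\,\xi^\alpha}{\alpha!}
  \,\exp\biggl(-\frac{(\xi\cdot\xi)\lambda^2}{1-\lambda^2}\biggr).
\end{equation*}
Each summand $(x\cdot x)^{n-k}x^\alpha$ is homogeneous of degree $2n-k$ in $x$. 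So the monomials $x^\gamma$ with $2N<\abs\gamma\le 4N$ kept in \eqref{eq:TailDef} come exactly from the pairs $(n,k)$ with $2n-k>2N$. Given $k\ge 0$ and $n\le 2N$, this is the range $N+1\le n\le 2N$, $0\le k\le 2n-2N-1$. The upper bound $2n-k\le 4N$ is automatic.

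Next I insert $p(\xi)=\sum_{\abs\beta\le 2M}\coefficient{p}{\beta}\xi^\beta$ and integrate. The two Gaussian factors combine into
\begin{equation*}
  \exp\bigl(-(\xi\cdot\xi)\bigr)\exp\bigl(-(\xi\cdot\xi)\lambda^2/(1-\lambda^2)\bigr)=\exp\bigl(-(\xi\cdot\xi)/(1-\lambda^2)\bigr),
\end{equation*}
and the prefactor $(1-\lambda^2)^{-d/2}$ merges with $\D^d\xi/\sqrt{\pi}^d$ into the normalisation $\D^d\xi/\sqrt{\pi(1-\lambda^2)}^d$ used in \eqref{eq:CIntegral}. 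This yields the coefficient
\begin{equation*}
  \frac{2^k\lambda^{2n-k}}{(n-k)!\,(1-\lambda^2)^n\,\alpha!}
  \int_{\RR^d}\xi^{\alpha+\beta}\exp\biggl(-\frac{(\xi\cdot\xi)}{1-\lambda^2}\biggr)\frac{\D^d\xi}{\sqrt{\pi(1-\lambda^2)}^d}
\end{equation*}
in front of $\bigl(-(x\cdot x)\bigr)^{n-k}x^\alpha\coefficient{p}{\beta}$.

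The only remaining step, and the one requiring care, is the bookkeeping of constants to match the stated normalisation. The claimed expression contributes
\begin{equation*}
  2^n\bigl(\lambda^2/(2(1-\lambda^2))\bigr)^{n-k/2}=2^{k/2}\lambda^{2n-k}(1-\lambda^2)^{-n+k/2}.
\end{equation*}
The prefactor of $C_\lambda(\alpha,\beta)$, multiplied by $\bigl(\lfloor\frac\alpha2\rfloor!\,\lceil\frac\alpha2\rceil!\bigr)^{-1/2}$, contributes $2^{k/2}(1-\lambda^2)^{-k/2}/\alpha!$. The product is $2^k\lambda^{2n-k}(1-\lambda^2)^{-n}/\alpha!$, which agrees with the computation above. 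Hence the formula holds.

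Finally, for $\lambda=0$ every term carries the factor $\lambda^{2n-k}$ with $2n-k\ge 2N+1\ge 1$, so every term vanishes and $\Tail_{0,\lambda}$ reduces to $\Tail_{N,0}(p)=0$.
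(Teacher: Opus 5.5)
Your proposal is correct and follows essentially the same route as the paper: expand $\exp_N$ and apply the binomial theorem, keep the pairs $(n,k)$ with $2n-k>2N$ by homogeneity, apply the multinomial theorem and expand $p$, merge the two Gaussian factors, and match constants. Your bookkeeping $2^k\lambda^{2n-k}(1-\lambda^2)^{-n}/\alpha!$ checks out, and the $\lambda=0$ argument via $2n-k\ge 1$ is also the paper's; only the phrase ``$\Tail_{0,\lambda}$ reduces to'' in your last line is a notational slip.
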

\begin{proof}
  We apply the definitions of $\Tail_{N,\lambda}(p)$,
  $\skal{\argument}{\argument}_\infty$, $\Kernel_{N,\lambda}$, $\exp_N$, and $C_\lambda$,
  see \eqref{eq:TailDef}, \eqref{eq:skalInftyDef}, \eqref{eq:KNDef}, \eqref{eq:expNDef}, and \eqref{eq:CIntegral},
  respectively, and in between we perform some simplifications:
  \begin{align*}
    &\!\!\Tail_{N,\lambda}(p)
    =
    \\
    &=
    \sum_{\substack{\alpha \in \NN_0^d \\ 2N < \abs{\alpha} \le 4N}}
    \skal[\big]{ \coefficient{\Kernel_{N,\lambda}}{\alpha} }{ p }_\infty \, x^\alpha
    \\
    &=
    \sum_{\substack{\alpha \in \NN_0^d \\ 2N < \abs{\alpha} \le 4N}}
    \int_{\RR^d}
    \coefficient[\big]{\Kernel_{N,\lambda}(\xi)}{\alpha}\, x^\alpha \,p(\xi) \exp\bigl( - (\xi\cdot \xi) \bigr)\,
    \frac{\D^d\xi}{\sqrt{\pi}^d}
    \\
    &=
    \sum_{\substack{\alpha \in \NN_0^d \\ 2N < \abs{\alpha} \le 4N}}
    \int_{\RR^d}
    \coefficient[\bigg]{\exp_N\biggl(-\frac{(x\cdot x) \lambda^2  - 2 (x \cdot \xi) \lambda}{1-\lambda^2}\biggr)}{\alpha}\,x^\alpha \, p(\xi) \exp\biggl( - \frac{(\xi \cdot \xi) }{1-\lambda^2} \biggr)\,
    \frac{\D^d\xi}{\sqrt{\pi(1-\lambda^2)}^d}
    \\
    &=
    \sum_{\substack{\alpha \in \NN_0^d \\ 2N < \abs{\alpha} \le 4N}} \sum_{n=0}^{2N}
    \int_{\RR^d}
    \frac{\coefficient[\big]{\bigl(-(x\cdot x) \lambda^2  + 2 (x \cdot \xi) \lambda \bigr)^n }{\alpha}\,x^\alpha}{(1-\lambda^2)^n \, n!}\,
    p(\xi) \exp\biggl( - \frac{(\xi \cdot \xi) }{1-\lambda^2} \biggr)\,
    \frac{\D^d\xi}{\sqrt{\pi(1-\lambda^2)}^d}
    \\
    &=
    \sum_{\substack{\alpha \in \NN_0^d \\ 2N < \abs{\alpha} \le 4N}} \sum_{n=0}^{2N} \,\sum_{k=0}^{n}
    \int_{\RR^d}
    \frac{\coefficient[\big]{\bigl(-(x\cdot x) \lambda^2\bigr)^{n-k}  \bigl(2 (x \cdot \xi) \lambda\bigr)^k}{\alpha}\,x^\alpha}{(1-\lambda^2)^n \, (n-k)!\, k!}\,
    p(\xi) \exp\biggl( - \frac{(\xi \cdot \xi) }{1-\lambda^2} \biggr)\,
    \frac{\D^d\xi}{\sqrt{\pi(1-\lambda^2)}^d}
    \\
    &\stackrel{\!\!\textup{(a)}\!\!}{=}
    \sum_{n=N+1}^{2N}
    \sum_{k=0}^{2n-2N-1}
    \int_{\RR^d}
    \frac{\bigl(-(x\cdot x) \lambda^2\bigr)^{n-k}  \bigl(2 (x \cdot \xi) \lambda\bigr)^k}{(1-\lambda^2)^n \, (n-k)!\, k!}\,
    p(\xi) \exp\biggl( - \frac{(\xi \cdot \xi) }{1-\lambda^2} \biggr)\,
    \frac{\D^d\xi}{\sqrt{\pi(1-\lambda^2)}^d}
    \\
    &\stackrel{\!\!\textup{(b)}\!\!}{=}
    \sum_{n=N+1}^{2N}
    \sum_{k=0}^{2n-2N-1}
    \sum_{\substack{\alpha\in \NN_0^d \\ \abs{\alpha} = k}}
    \sum_{\substack{\beta\in \NN_0^d \\ \abs{\beta} \le 2M}}
    \int_{\RR^d}
    \frac{\bigl(-(x\cdot x) \lambda^2\bigr)^{n-k} \,2^k\,\lambda^k\, x^\alpha\, \xi^\alpha}{(1-\lambda^2)^n \, (n-k)!\, \alpha!}\,
    \coefficient{p}{\beta} \,\xi^\beta\, \exp\biggl( - \frac{(\xi \cdot \xi) }{1-\lambda^2} \biggr)\,
    \frac{\D^d\xi}{\sqrt{\pi(1-\lambda^2)}^d}
    \\
    &=
    \sum_{n=N+1}^{2N}
    \sum_{k=0}^{2n-2N-1}
    \sum_{\substack{\alpha\in \NN_0^d \\ \abs{\alpha} = k}}
    \sum_{\substack{\beta\in \NN_0^d \\ \abs{\beta} \le 2M}}
    \frac{
      \bigl(-(x\cdot x) \lambda^2\bigr)^{n-k} \,2^{k/2}\,\lambda^k\, x^\alpha
    }{
      (1-\lambda^2)^{n-k/2} \, (n-k)!\, \bigl(\lfloor\frac{\alpha}{2}\rfloor!\,\lceil\frac{\alpha}{2}\rceil!\bigr)^{1/2}
    }\,
    \coefficient{p}{\beta}\,C_\lambda(\alpha,\beta)
    \\
    &=
    \sum_{n=N+1}^{2N}
    2^n
    \sum_{k=0}^{2n-2N-1}
    \sum_{\substack{\alpha\in \NN_0^d \\ \abs{\alpha} = k}}
    \sum_{\substack{\beta\in \NN_0^d \\ \abs{\beta} \le 2M}}
    \frac{\bigl(-(x\cdot x)\bigr)^{n-k}}{(n-k)!}
    \biggl(\frac{\lambda^2}{2(1-\lambda^2)}\biggr)^{n-k/2}
    \frac{x^\alpha}{\bigl(\lfloor\frac{\alpha}{2}\rfloor!\,\lceil\frac{\alpha}{2}\rceil!\bigr)^{1/2}}\,
    \coefficient{p}{\beta}\,C_\lambda(\alpha,\beta)
    .
  \end{align*}
  At (a) we use that $\bigl(-(x\cdot x) \lambda^2\bigr)^{n-k} \bigl(2 (x \cdot \xi) \lambda\bigr)^k$
  is homogeneous in $x_1,\dots,x_d$ of degree $2n-k$, and that $2N < 2n-k$ if and only if $n \ge N+1$ and $k \le 2n-2N-1$.
  At (b) we apply the multinomial theorem \eqref{eq:multinomial} to $(x\cdot \xi)^k / k!$,
  and we expand $p(\xi) = \sum_{\beta\in \NN_0^d} \coefficient{p}{\beta}\,\xi^\beta$.
  This essentially completes the proof. In the special case $\lambda = 0$ it follows that $\Tail_{N,\lambda}(p)=0$ because
  the exponent $n-k/2$ of $\lambda^2 / (2(1-\lambda^2))$ is strictly positive for all $n \in \{N+1,\dots,2N\}$
  and $k \in \{0, \dots, 2n-2N-1\}$.
\end{proof}
In order to obtain an upper bound for $\Tail_{N,\lambda}(p)$ with respect to $\preccurlyeq$
we need upper and lower bounds for the monomials $x^\alpha$, especially in those cases where $\alpha \notin (2\NN_0)^d$.

\begin{lemma} \label{lemma:makeXsquareAgain}
  For all $\eta \in {]0,\infty[}$ and $\alpha \in \NN_0^d$ the following estimate holds:
  \begin{equation}
    \label{eq:makeXsquareAgain}
    -
    b(\eta,\alpha)
    \preccurlyeq
    \frac{x^\alpha}{\bigl(\lfloor\frac{\alpha}{2}\rfloor!\,\lceil\frac{\alpha}{2}\rceil!\bigr)^{1/2}}
    \preccurlyeq
    b(\eta,\alpha)
    \quad
    \text{with}
    \quad
    b(\eta,\alpha)
    \coloneqq
    \frac{1}{2^d}
    \sum_{\nu\in\{0,1\}^d}
    \eta^{\abs{\lfloor \frac{\alpha+\nu}2 \rfloor} -\frac{\abs{\alpha}}2}\,
    \frac{
      x^{2\lfloor \frac{\alpha+\nu}2 \rfloor}
    }{
      \lfloor \frac{\alpha+\nu}2 \rfloor!
    }
    .
  \end{equation}
\end{lemma}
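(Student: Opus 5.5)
The plan is to reduce to the univariate case and then multiply the univariate bounds using \eqref{eq:productOfBounds}. Both sides of \eqref{eq:makeXsquareAgain} factor over the coordinates. The left middle term is $\prod_{j=1}^d x_j^{\alpha_j}/(\lfloor\alpha_j/2\rfloor!\,\lceil\alpha_j/2\rceil!)^{1/2}$. For the bound, the sum over $\nu\in\{0,1\}^d$ factors coordinatewise, since $\lfloor\frac{\alpha+\nu}{2}\rfloor$ is computed entrywise and the exponent of $\eta$ is additive. This gives
\begin{equation*}
  b(\eta,\alpha) = \prod_{j=1}^d b_1(\eta,\alpha_j),
  \quad\quad
  b_1(\eta,a) \coloneqq \frac12 \sum_{\nu\in\{0,1\}} \eta^{\lfloor \frac{a+\nu}{2}\rfloor - \frac a2}\,\frac{x_j^{2\lfloor\frac{a+\nu}{2}\rfloor}}{\lfloor\frac{a+\nu}{2}\rfloor!},
\end{equation*}
and each $b_1(\eta,\alpha_j)$ is a sum of squares. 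So by \eqref{eq:productOfBounds} and induction on $d$, it suffices to show $-b_1(\eta,a)\preccurlyeq z^a/(\lfloor a/2\rfloor!\,\lceil a/2\rceil!)^{1/2}\preccurlyeq b_1(\eta,a)$ in $\RR[z]$ for every $a\in\NN_0$.

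For the univariate claim I split by the parity of $a$. If $a=2m$ is even, then $\lfloor\frac{a+\nu}{2}\rfloor=m$ for both $\nu$. Hence $b_1=z^{2m}/m!$, the $\eta$-exponent is $0$, and the middle term is also $z^{2m}/m!$. The upper bound is then an equality, and the lower bound $-z^{2m}/m!\preccurlyeq z^{2m}/m!$ is trivial because $2z^{2m}/m!$ is a square. If $a=2m+1$ is odd, then $\nu=0$ gives $m$ and $\nu=1$ gives $m+1$, with $\eta$-exponents $-\frac12$ and $+\frac12$. So
\begin{equation*}
  b_1 = \frac12\Bigl(\eta^{-1/2}\frac{z^{2m}}{m!} + \eta^{1/2}\frac{z^{2m+2}}{(m+1)!}\Bigr),
\end{equation*}
and the middle term is $z^{2m+1}/\sqrt{m!\,(m+1)!}$. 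I write it as a product $PQ$ with $P\coloneqq z^m/\sqrt{m!}$ and $Q\coloneqq z^{m+1}/\sqrt{(m+1)!}$, and apply \eqref{eq:pseudoCS} with $\lambda\coloneqq\eta^{-1/2}$. This gives $\pm PQ\preccurlyeq\frac12(\eta^{-1/2}P^2+\eta^{1/2}Q^2)=b_1$, which is exactly the claim.

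There is no real obstacle here. The only point that needs care is bookkeeping: checking that the factorization of $b(\eta,\alpha)$ into coordinate factors exactly matches the $2^{-d}$ and the $\nu$-sum, in particular that $\abs{\lfloor\frac{\alpha+\nu}{2}\rfloor}-\abs{\alpha}/2$ splits as a sum of coordinate contributions. Also, \eqref{eq:productOfBounds} requires the bounding polynomials to be sums of squares, and the univariate $b_1$ clearly are.
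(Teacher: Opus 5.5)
Your proposal is correct and follows the paper's proof. The paper likewise reduces to the univariate case via \eqref{eq:productOfBounds}, treats the even case as trivial, and uses \eqref{eq:pseudoCS} in the odd case. The only cosmetic difference is in that odd case: you apply \eqref{eq:pseudoCS} directly to $P=z^m/\sqrt{m!}$ and $Q=z^{m+1}/\sqrt{(m+1)!}$ with $\lambda=\eta^{-1/2}$, whereas the paper bounds $z$ alone and then multiplies by the square $z^{2m}/(m!\,(m+1)!)^{1/2}$.
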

\begin{proof}
  Let $\eta \in {]0,\infty[}$.
  By \eqref{eq:productOfBounds} it is sufficient to treat the univariate case only, i.e.,
  \begin{equation}
   \label{eq:makeXsquareAgain:internal}
   \firstInternalTag
    -
    c(\eta,k)
    \preccurlyeq
    \frac{z^k}{\bigl(\lfloor\frac{k}{2}\rfloor!\,\lceil\frac{k}{2}\rceil!\bigr)^{1/2}}
    \preccurlyeq
    c(\eta,k)
    \quad
    \text{with}
    \quad
    c(\eta,k)
    \coloneqq
    \frac{1}{2}
    \sum_{e\in\{0,1\}}
    \eta^{\lfloor \frac{k+e}2 \rfloor - \frac k2} \,
    \frac{
      z^{2\lfloor \frac{k+e}2 \rfloor}
    }{
      \lfloor \frac{k+e}2 \rfloor!
    }
    \quad\text{for $k\in \NN_0$,}
  \end{equation}
  where $\preccurlyeq$ is the partial order on $\RR[z]$ induced by the convex cone $\SOS{\RR[z]}$.
  So let $k\in \NN_0$. We treat the two cases of even and odd $k$ separately.
  If $k$ is even, then $k/2 = \lfloor k/2 \rfloor = \lceil k/2 \rceil = \lfloor (k+1)/2 \rfloor$, so
  $z^k / \bigl(\lfloor\frac{k}{2}\rfloor!\,\lceil\frac{k}{2}\rceil!\bigr)^{1/2} = z^k / (k/2)! = c(\eta,k)$
  and \eqref{eq:makeXsquareAgain:internal} holds.
  If $k$ is odd, say, $k = 2\ell + 1$ for some $\ell \in \NN_0$,
  then $\lfloor k/2 \rfloor = \ell$, and $\lceil k/2 \rceil = \lfloor (k+1)/2 \rfloor = \ell+1$, so
  \begin{equation*}
    c(\eta,k)
    =
    \frac{1}{2}
    \biggl(
      \eta^{-\frac12}\,
      \frac{
        z^{2\ell}
      }{
        \ell!
      }
      +
      \eta^{\frac12}\,
      \frac{
        z^{2(\ell+1)}
      }{
        (\ell+1)!
      }
    \biggr)
    .
  \end{equation*}
  By \eqref{eq:pseudoCS} the estimate
  \begin{equation*}
    -\frac{1}{2}\biggl(
      \Bigl( \frac{\ell+1}{\eta} \Bigr)^{\frac12} + \Bigl( \frac{\eta}{\ell+1} \Bigr)^{\frac12} z^2
    \biggr)
    \preccurlyeq
    z
    \preccurlyeq
    \frac{1}{2}\biggl(
      \Bigl( \frac{\ell+1}{\eta} \Bigr)^{\frac12} + \Bigl( \frac{\eta}{\ell+1} \Bigr)^{\frac12} z^2
    \biggr)
  \end{equation*}
  holds, from which  \eqref{eq:makeXsquareAgain:internal} follows by multiplication with $z^{2\ell} / \bigl( \ell! (\ell+1)! \bigr)^{1/2}$.
\end{proof}

\begin{proposition} \label{proposition:makeXsquareAgain}
  Let $M,N\in \NN_0$, $p\in \RR[x_1,\dots,x_d]_{2M}$, and $\lambda \in {[0,1[}$. Then
  \begin{equation*}
    \begin{split}
      &\!\Tail_{N,\lambda}(p)
      \preccurlyeq
      \\
      &\preccurlyeq
      \sum_{\nu\in\{0,1\}^d}
      \sum_{n=N+1}^{2N}
      \! 2^{n-d}
      \sum_{k=0}^{2n-2N-1}\!\!
      \sum_{\substack{\alpha\in \NN_0^d \\ \abs{\alpha} = k}}
      \sum_{\substack{\beta\in \NN_0^d \\ \abs{\beta} \le 2M}}\!
      \frac{(x\cdot x)^{n-k}}{(n-k)!}
      \biggl(\frac{1}{2(1-\lambda^2)}\biggr)^{n-k + \abs{\lfloor\frac{\alpha+\nu}2\rfloor}}\,
      \frac{x^{2\lfloor\frac{\alpha+\nu}2\rfloor}}{\lfloor\frac{\alpha+\nu}2\rfloor!}\,
      \abs{\coefficient{p}{\beta}}\,C_\lambda(\alpha,\beta)
      .
    \end{split}
  \end{equation*}
\end{proposition}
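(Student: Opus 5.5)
We start from the exact expansion of $\Tail_{N,\lambda}(p)$ in Proposition~\ref{proposition:TailExpansion} and bound it term by term with respect to $\preccurlyeq$. Each term is a real scalar times a product of three polynomial factors:
\[
\frac{\bigl(-(x\cdot x)\bigr)^{n-k}}{(n-k)!},
\qquad
\frac{x^\alpha}{\bigl(\lfloor\frac{\alpha}{2}\rfloor!\,\lceil\frac{\alpha}{2}\rceil!\bigr)^{1/2}},
\qquad
\coefficient{p}{\beta}\,C_\lambda(\alpha,\beta)\,\Bigl(\frac{\lambda^2}{2(1-\lambda^2)}\Bigr)^{n-k/2}.
\]
The third factor is a scalar. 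Since $C_\lambda(\alpha,\beta)\ge 0$ and $\lambda\in{[0,1[}$, its absolute value is at most
\[
\abs{\coefficient{p}{\beta}}\,C_\lambda(\alpha,\beta)\,\bigl(2(1-\lambda^2)\bigr)^{-(n-k/2)},
\]
because $\lambda^{2(n-k/2)}\le 1$. The exponent $n-k/2$ is positive for the relevant indices.

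For the polynomial factors, the first satisfies $-q_1\preccurlyeq \pm (x\cdot x)^{n-k}/(n-k)!\preccurlyeq q_1$ with $q_1\coloneqq (x\cdot x)^{n-k}/(n-k)!$, which is a sum of squares. For the second we invoke Lemma~\ref{lemma:makeXsquareAgain} with a free parameter $\eta>0$, which gives $-b(\eta,\alpha)\preccurlyeq x^\alpha/(\cdots)^{1/2}\preccurlyeq b(\eta,\alpha)$, where $b(\eta,\alpha)$ is a sum of squares. By \eqref{eq:productOfBounds}, the product of the two factors lies in the order interval $[-q_1 b(\eta,\alpha),\,q_1 b(\eta,\alpha)]$. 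Multiplying by the real scalar (of either sign) preserves this, with the bound scaled by its absolute value. Summing over $n,k,\alpha,\beta$ then yields an upper bound for $\Tail_{N,\lambda}(p)$ using the convexity of $\preccurlyeq$ under sums.

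The remaining step is to choose $\eta$ so the powers of $\frac{1}{2(1-\lambda^2)}$ combine into the stated form. The term for $\nu$ in $b(\eta,\alpha)$ carries the factor $\eta^{\abs{\lfloor\frac{\alpha+\nu}2\rfloor}-k/2}$, using $\abs\alpha=k$. Taking $\eta\coloneqq 1/(2(1-\lambda^2))$, this combines with $\bigl(2(1-\lambda^2)\bigr)^{-(n-k/2)}$ to give exactly $\bigl(2(1-\lambda^2)\bigr)^{-(n-k+\abs{\lfloor\frac{\alpha+\nu}2\rfloor})}$. The prefactor $2^{-d}$ from $b$ merges with $2^n$ into $2^{n-d}$. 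Pulling the sum over $\nu\in\{0,1\}^d$ to the front then reproduces the claimed right-hand side.

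The main obstacle is only this bookkeeping: the choice of $\eta$ must be uniform across all terms, and it is. The sign handling also needs care, since it is both the alternating factor $(-1)^{n-k}$ and the possibly negative $\coefficient{p}{\beta}$ that force us to use two-sided bounds instead of one-sided ones.
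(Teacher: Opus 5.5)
Your proof is correct and follows the paper's route: expand $\Tail_{N,\lambda}(p)$ via Proposition~\ref{proposition:TailExpansion}, bound each term two-sidedly using Lemma~\ref{lemma:makeXsquareAgain} together with \eqref{eq:productOfBounds}, and discard powers of $\lambda$. The only difference is the order of these steps: the paper takes $\eta = \lambda^2/(2(1-\lambda^2))$ and bounds $\lambda^2 \le 1$ only after combining the exponents, whereas you bound $\lambda^{2n-k}\le 1$ first and then take $\eta = 1/(2(1-\lambda^2))$. Your choice has the minor advantage that $\eta>0$ also holds at $\lambda=0$, as the lemma requires; at $\lambda=0$ the paper's $\eta$ is $0$, a case that is only harmless because $\Tail_{N,0}(p)=0$.
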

\begin{proof}
  Apply the estimate of Lemma~\ref{lemma:makeXsquareAgain} with $\eta \coloneqq \lambda^2 / (2(1-\lambda^2))$
  to the formula for $\Tail_{N,\lambda}(p)$ from Proposition~\ref{proposition:TailExpansion}
  and then use that $\lambda^2 / \bigl(2(1-\lambda^2)\bigr) \le 1 / \bigl(2(1-\lambda^2)\bigr)$.
\end{proof}
In the next step we apply an estimate for the integral $C_\lambda(\alpha,\beta)$.

\begin{lemma} \label{lemma:CIntegralEstimate}
  Let $\lambda \in {[0,1[}$ and $\alpha,\beta \in \NN_0^d$. If $\alpha+\beta \notin (2\NN_0)^d$, then $C_\lambda(\alpha+\beta) = 0$.
  In particular, if $\abs\alpha+\abs\beta \notin 2\NN_0$, then $C_\lambda(\alpha+\beta) = 0$.
  Moreover, the estimate $C_\lambda(\alpha,\beta) \le (\beta!)^{1/2}$ holds.
\end{lemma}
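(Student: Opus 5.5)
The parity statements come straight from the explicit formula \eqref{eq:GaussianIntegral}. The integral in \eqref{eq:CIntegral} factorizes over the coordinates as
\[
\prod_{j=1}^d \int_\RR \xi_j^{\alpha_j+\beta_j}\exp\bigl(-\xi_j^2/(1-\lambda^2)\bigr)\,\D\xi_j/\sqrt{\pi(1-\lambda^2)},
\]
which is \eqref{eq:GaussianIntegral} with $1-\lambda^2$ in place of $\lambda$. If some $\alpha_j+\beta_j$ is odd, that factor vanishes, so $C_\lambda(\alpha,\beta)=0$. If $\abs\alpha+\abs\beta$ is odd, then some $\alpha_j+\beta_j$ must be odd, which gives the "in particular" claim.

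For the estimate $C_\lambda(\alpha,\beta)\le(\beta!)^{1/2}$, the prefactor in \eqref{eq:CIntegral} is also a product over $j$. Hence it suffices to prove the univariate inequality, coordinatewise, and multiply. I would \emph{not} use the crude bound \eqref{eq:GaussianIntegralEstimate}: it loses a factor of order $2^{a}/\binom{a}{\lfloor a/2\rfloor}$, which is not bounded. Instead I would work with the exact value. Fix $a,b\in\NN_0$ with $a+b=2m$ even. By \eqref{eq:GaussianIntegral} the one-dimensional factor equals
\[
\bigl(\tfrac{2^a\lfloor a/2\rfloor!\lceil a/2\rceil!}{(1-\lambda^2)^a (a!)^2}\bigr)^{1/2}(1-\lambda^2)^m\frac{(2m)!}{4^m m!}.
\]
The powers of $1-\lambda^2$ combine to $(1-\lambda^2)^{b/2}\le 1$ and can be dropped. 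After squaring, the goal becomes
\[
\frac{2^a\,\lfloor a/2\rfloor!\,\lceil a/2\rceil!\,\bigl((2m)!\bigr)^2}{(a!)^2\,4^{2m}\,(m!)^2}\le b! .
\]

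The key step is to bound one factor $(2m)!$ by $(2m)! = a!\,b!\,\binom{2m}{a}$ together with Proposition~\ref{proposition:maximalBinomials}. That proposition gives $\binom{2m}{a}\le 2^{2m-a}\binom{a}{\lfloor a/2\rfloor} = 2^{2m-a}\,a!/(\lfloor a/2\rfloor!\lceil a/2\rceil!)$. Substituting this cancels the factor $\lfloor a/2\rfloor!\lceil a/2\rceil!$, both powers $2^{\pm a}$, and $(a!)^2$. What remains is $b!\cdot (2m)!/(4^m (m!)^2) = b!\,\binom{2m}{m}4^{-m}$. The last step is the standard inequality $\binom{2m}{m}\le 4^m$, which follows e.g.\ from Lemma~\ref{lemma:maximalBinomial} because the product there is at most $1$. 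This yields the claim.

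I expect the main obstacle to be exactly this choice of how to split $(2m)!$. Naive bounds lose exponential factors in $a$, so the proof must use the sharp maximal-binomial estimate of Proposition~\ref{proposition:maximalBinomials} to make the $\lfloor a/2\rfloor!\lceil a/2\rceil!$ normalization cancel exactly. The rest is bookkeeping of the factorization over coordinates.
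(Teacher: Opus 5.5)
Your proof is correct and is essentially the paper's argument: the paper also factorizes over coordinates, gets the vanishing from parity, and uses Proposition~\ref{proposition:maximalBinomials} in the form $\binom{\alpha+\beta}{\alpha}\le 2^{\abs\beta}\binom{\alpha}{\lfloor\alpha/2\rfloor}$ to cancel the normalization $\lfloor\alpha/2\rfloor!\,\lceil\alpha/2\rceil!$. The only difference is the order of the two inequalities. The paper applies \eqref{eq:GaussianIntegralEstimate} first, and that bound is exactly the exact Gaussian moment combined with $\binom{2m}{m}\le 4^m$, which is your last step. So your warning against \eqref{eq:GaussianIntegralEstimate} is unfounded: it discards precisely the factor $\binom{2m}{m}4^{-m}$ that your own argument discards at the end.
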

\begin{proof}
  We can expand $C_\lambda(\alpha,\beta)$ as a product of Gaussian integrals:
  \begin{equation*}
    C_\lambda(\alpha,\beta)
    =
    \prod_{j=1}^d
    \bigg(
      \frac{
        2^{\alpha_j} \,\lfloor\frac{\alpha_j}{2}\rfloor! \, \lceil\frac{\alpha_j}{2}\rceil!
      }{
        (1-\lambda^2)^{\alpha_j} \,(\alpha_j!)^2
      }
    \biggr)^{1/2}
    \int_{\RR}
    \xi_j^{\alpha_j+\beta_j}
    \exp\biggl( - \frac{\xi_j^2}{1-\lambda^2} \biggr)
    \frac{\D\xi_j}{\sqrt{\pi(1-\lambda^2)}}
  \end{equation*}
  Formula \eqref{eq:GaussianIntegral} for Gaussian integrals shows that $C_\lambda(\alpha+\beta) = 0$
  if $\alpha+\beta \notin (2\NN_0)^d$, and in particular if $\abs\alpha+\abs\beta \notin 2\NN_0$.
  Moreover, estimate \eqref{eq:GaussianIntegralEstimate} yields
  \begin{align*}
    C_\lambda(\alpha,\beta)
    &\le
    \bigg(
      \frac{
        2^{\abs\alpha} \,\lfloor\frac{\alpha}{2}\rfloor! \, \lceil\frac{\alpha}{2}\rceil!
      }{
        (1-\lambda^2)^{\abs\alpha} \,(\alpha!)^2
      }
      \,
      \frac{
        (1-\lambda^2)^{\abs{\alpha+\beta}} \,(\alpha+\beta)!
      }{
        2^{\abs{\alpha+\beta}}
      }
    \biggr)^{\!\frac{1}{2}}
    \\
    &=
    \Biggl(
      \frac{
        (1-\lambda^2)^{\abs{\beta}}\,\beta!
      }{
        2^{\abs{\beta}}
      }
      \binom{\alpha+\beta}{\alpha}
      \,
      \binom{\alpha}{ \lfloor {\alpha}/{2} \rfloor }^{-1}
    \Biggr)^{\!\frac{1}{2}}
    \\
    &\le
    \bigl( (1-\lambda^2)^{\abs{\beta}}\,\beta! \bigr)^{\frac{1}{2}}
    \\
    &\le
    (\beta!)^{\frac{1}{2}}
  \end{align*}
  where we use that $\binom{\alpha+\beta}{\alpha} \le 2^{\abs\beta} \binom{\alpha}{\lfloor \alpha / 2 \rfloor}$
  by Proposition~\ref{proposition:maximalBinomials}.
\end{proof}

\begin{proposition} \label{proposition:CIntegralEstimate}
  Let $M,N\in \NN_0$, $p\in \RR[x_1,\dots,x_d]_{2M}$, and $\lambda \in {[0,1[}$. Then
  \begin{equation*}
    \begin{split}
      &\!\Tail_{N,\lambda}(p)
      \preccurlyeq
      \\
      &\preccurlyeq
      \sum_{\nu\in\{0,1\}^d}
      \sum_{\substack{\beta\in \NN_0^d \\ \abs{\beta} \le 2M}}
      \abs{\coefficient{p}{\beta}}\,(\beta!)^{\frac{1}{2}}\!\!
      \sum_{n=N+1}^{2N}
      \!2^{n-d}\!\!
      \sum_{k\in \mathcal K_\beta(n,N)}\,
      \sum_{\alpha \in \mathcal A_\beta(k)}\!
      \frac{(x\cdot x)^{n-k}}{(n-k)!}
      \biggl(\frac{1}{2(1-\lambda^2)}\biggr)^{n-k + \abs{\lfloor\frac{\alpha+\nu}2\rfloor}}\,
      \frac{x^{2\lfloor\frac{\alpha+\nu}2\rfloor}}{\lfloor\frac{\alpha+\nu}2\rfloor!}\,
    \end{split}
  \end{equation*}
  where
  \begin{align}
    \mathcal K_\beta(n,N) &\coloneqq \set[\big]{k\in \NN_0}{0\le k \le 2n-2N-1 \text{ and }k+\abs\beta \in 2\NN_0}
  \shortintertext{and}
    \mathcal A_\beta(k) &\coloneqq \set[\big]{\alpha\in \NN_0^d}{\abs\alpha = k\text{ and }\alpha+\beta \in (2\NN_0)^d}
    .
  \end{align}
\end{proposition}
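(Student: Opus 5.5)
The plan is to start from the bound of Proposition~\ref{proposition:makeXsquareAgain} and apply Lemma~\ref{lemma:CIntegralEstimate} summand by summand. No new $\preccurlyeq$-estimate is needed; the work is bookkeeping on nonnegative coefficients.

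First, every summand on the right-hand side of Proposition~\ref{proposition:makeXsquareAgain} has the form (nonnegative scalar)$\,\cdot\,(x\cdot x)^{n-k}\,x^{2\lfloor\frac{\alpha+\nu}{2}\rfloor}$. This product is a square, hence lies in $\SOS{\RR[x_1,\dots,x_d]}$. The scalar factor collects $2^{n-d}$, the power of $1/(2(1-\lambda^2))$, the factorials, $\abs{\coefficient{p}{\beta}}$ and $C_\lambda(\alpha,\beta)$, and is nonnegative because $C_\lambda(\alpha,\beta)\ge 0$. Therefore, if any summand's scalar coefficient is replaced by a larger one, or a summand is dropped because its coefficient vanishes, the sum can only increase (or stay equal) in the order $\preccurlyeq$. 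This uses the compatibility of $\preccurlyeq$ with addition of sums of squares noted in Section~\ref{sec:notation}.

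Second, I apply Lemma~\ref{lemma:CIntegralEstimate} in two ways.
\begin{itemize}
\item By its vanishing statement, $C_\lambda(\alpha,\beta)=0$ unless $\alpha+\beta\in(2\NN_0)^d$. This condition forces $\abs\alpha+\abs\beta=k+\abs\beta$ to be even. So for fixed $\beta$ the sum over $k\in\{0,\dots,2n-2N-1\}$ and $\alpha$ with $\abs\alpha=k$ may be restricted to $k\in\mathcal K_\beta(n,N)$ and $\alpha\in\mathcal A_\beta(k)$ without changing its value.
\item For the surviving terms, the estimate $C_\lambda(\alpha,\beta)\le(\beta!)^{1/2}$ replaces $C_\lambda(\alpha,\beta)$ by $(\beta!)^{1/2}$, which is allowed by the first step.
\end{itemize}

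Finally, all sums are finite, so I can reorder them. I pull the sums over $\nu$ and $\beta$ to the front, together with the now $\alpha$-independent factor $\abs{\coefficient{p}{\beta}}(\beta!)^{1/2}$. This gives exactly the stated expression.

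I do not expect a real obstacle. The only point that needs care is the justification in the first step: replacing a coefficient by a larger one is a $\preccurlyeq$-increase precisely because the polynomial it multiplies is a square. I should also confirm that the vanishing statement of Lemma~\ref{lemma:CIntegralEstimate} refers to $C_\lambda(\alpha,\beta)$; the lemma writes $C_\lambda(\alpha+\beta)$, which is a typo for this.
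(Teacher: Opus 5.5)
Your proposal is correct and takes the same approach as the paper, which applies Lemma~\ref{lemma:CIntegralEstimate} termwise (vanishing statement and the bound $C_\lambda(\alpha,\beta)\le(\beta!)^{1/2}$) to the estimate of Proposition~\ref{proposition:makeXsquareAgain}; you are also right that $C_\lambda(\alpha+\beta)$ in that lemma is a typo for $C_\lambda(\alpha,\beta)$. One small correction: $(x\cdot x)^{n-k}x^{2\lfloor\frac{\alpha+\nu}{2}\rfloor}$ is in general a sum of squares rather than a single square (for $n-k$ odd and $d\ge 2$), but being a sum of squares is all your monotonicity argument needs.
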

\begin{proof}
  Apply Lemma~\ref{lemma:CIntegralEstimate} to the estimate for $\Tail_{N,\lambda}(p)$
  from Proposition~\ref{proposition:makeXsquareAgain}.
\end{proof}
Next we simplify the summation over $\mathcal K_\beta(n,N)$ and $\mathcal A_\beta(k)$:

\begin{lemma} \label{lemma:halfingSubstitution}
  Let $\lambda \in {[0,1[}$, $\nu \in \{0,1\}^d$, $\beta \in \NN_0^d$, $N\in \NN_0$ and $n\in \{N+1,\dots,2N\}$.
  Let $\mathcal K_\beta(n,N)$ and $\mathcal A_\beta(k)$ be defined like in Proposition~\ref{proposition:CIntegralEstimate}.
  Assume that $N \ge \lfloor d/2 \rfloor$. Then
  \begin{equation}
    \label{eq:halfingSubstitution}
    \begin{split}
      \sum_{k\in \mathcal K_\beta(n,N)}
      \sum_{\alpha \in \mathcal A_\beta(k)}
      \frac{(x\cdot x)^{n-k}}{(n-k)!}
      &\biggl(\frac{1}{2(1-\lambda^2)}\biggr)^{n-k + \abs{\lfloor\frac{\alpha+\nu}2\rfloor}}\,
      \frac{x^{2\lfloor\frac{\alpha+\nu}2\rfloor}}{\lfloor\frac{\alpha+\nu}2\rfloor!}
      \preccurlyeq
      \\
      &\quad\quad\preccurlyeq
      (d+1)
      \sum_{m = N-\lfloor d/2\rfloor}^{n+\lceil d/2\rceil}
      \frac{ (x\cdot x)^m}{m!}
      \biggl(\frac{1}{1-\lambda^2}\biggr)^m
      .
    \end{split}
  \end{equation}
\end{lemma}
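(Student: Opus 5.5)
The plan is to reindex by $\gamma \coloneqq \lfloor\frac{\alpha+\nu}2\rfloor$ and $m \coloneqq n-k+\abs\gamma$. Every summand on the left of \eqref{eq:halfingSubstitution} then has the form
\begin{equation*}
  \frac{(x\cdot x)^{n-k}}{(n-k)!}\,\frac{x^{2\gamma}}{\gamma!}\,\bigl(2(1-\lambda^2)\bigr)^{-m}.
\end{equation*}
This is a product of sums of squares, so each term is in $\SOS{\RR[x_1,\dots,x_d]}$, and it suffices to dominate the sum term by term.

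First I determine the range of $m$. Since $\alpha+\beta\in(2\NN_0)^d$ and $\nu\in\{0,1\}^d$, we have $\abs{\gamma}\ge \abs\alpha/2 - d/2$ and $\abs\gamma \le \abs\alpha/2 + d/2$. Because $\abs\gamma$ is an integer, $\abs\gamma = k/2 + \delta$ with $\delta\in[-d/2,d/2]$, so $m = n - k/2 + \delta$. Using $k\le 2n-2N-1$, i.e.\ $n-k/2\ge N+\frac12$, together with $k\ge0$, this gives
\begin{equation*}
  N-\lfloor d/2\rfloor \le m \le n+\lceil d/2\rceil
\end{equation*}
after rounding integers appropriately. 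Here $N\ge\lfloor d/2\rfloor$ ensures the lower end is nonnegative.

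Second, I group terms with fixed $m$. The key identity is the multinomial theorem \eqref{eq:multinomial}:
\begin{equation*}
  \frac{(x\cdot x)^{j}}{j!}\,\frac{x^{2\gamma}}{\gamma!}\preccurlyeq \frac{(x\cdot x)^{j+\abs\gamma}}{(j+\abs\gamma)!}\binom{j+\abs\gamma}{j}.
\end{equation*}
More usefully, for fixed $j$ the sum over $\gamma$ with $\abs\gamma=g$ of $x^{2\gamma}/\gamma!$ equals $(x\cdot x)^g/g!$, and summing over $j+g=m$ gives
\begin{equation*}
  \sum_{j+g=m}\frac{(x\cdot x)^j}{j!}\frac{(x\cdot x)^g}{g!} = \frac{2^m (x\cdot x)^m}{m!}.
\end{equation*}
The factor $2^m$ cancels exactly against the $2^{-m}$ in $\bigl(2(1-\lambda^2)\bigr)^{-m}$, which leaves the target weight $(1-\lambda^2)^{-m}$.

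The remaining, and main, obstacle is multiplicity: the map $(k,\alpha)\mapsto(j,\gamma)$ need not be injective. For fixed $\nu$ and $\beta$, $\gamma$ determines $\alpha$ up to the parity constraint. The parity of each $\alpha_i$ is fixed by $\beta_i$, so $\alpha_i = 2\gamma_i$ or $2\gamma_i - 1$ or similar, determined once $\nu_i$ and the parity are known. Thus $\alpha$ is determined by $\gamma$, and $k=\abs\alpha$ is then determined too, so $j = n-k$ is determined. The map is therefore injective for fixed $n,\nu,\beta$, and the left side is dominated by a sum over distinct pairs $(j,\gamma)$ with $j+\abs\gamma=m$ in the range above.

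The $2^m$ identity, however, counts all pairs $(j,g)$, whereas here only those pairs arising from admissible $k$ occur. The factor $(d+1)$ should come from the fact that for fixed $m$ and $g=\abs\gamma$ the value $\delta$ ranges over at most $d+1$ integer values. I would therefore first bound the sum for fixed $\delta$ by $2^{-m}\sum_{j+g=m}\ldots\preccurlyeq (x\cdot x)^m/m!\,(1-\lambda^2)^{-m}$, and then sum over the at most $d+1$ values of $\delta$. Verifying this bookkeeping carefully, including the rounding in the $m$-range, is the step I expect to be delicate.
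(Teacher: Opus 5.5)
Your proof is correct, but it is organized differently from the paper's, and it actually proves more than you use.

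The paper's route relies on two separate injectivity statements: $\alpha\mapsto\lfloor\frac{\alpha+\nu}2\rfloor$ on each $\mathcal A_\beta(k)$, and $k\mapsto n-\lceil k/2\rceil$ on $\mathcal K_\beta(n,N)$. For each $k$ and each of the $d+1$ shifts $\Delta$ with $\abs{\gamma}=\lfloor k/2\rfloor+\Delta$, it enlarges the sum to all $\gamma$ of that length and collapses it by the multinomial theorem. It then bounds the single binomial coefficient $\binom{n-\lceil k/2\rceil+\Delta}{n-k}$ by $2^{n-\lceil k/2\rceil+\Delta}$. The factor $d+1$ is the number of pairs $(\ell,\Delta)$ with $\ell+\Delta=m$, and $N\ge\lfloor d/2\rfloor$ is needed to drop the constraint $\lfloor k/2\rfloor+\Delta\ge0$.

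You instead note that the coordinatewise parity constraint makes $\alpha\mapsto\gamma$ injective on all of $\bigcup_{k}\mathcal A_\beta(k)$ at once. You then group the terms by $m=n-k+\abs{\gamma}$ and dominate each group by the complete sum over $j+\abs{\gamma}=m$, which equals exactly $2^m(x\cdot x)^m/m!$. This already proves the inequality with constant $1$ in place of $d+1$. The fact that only pairs from admissible $k$ occur is harmless, because every omitted term is a sum of squares.

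So your final splitting by $\delta$ is unnecessary. It is still valid, since each $\delta$-slice is a sub-sum of its $m$-group. Note, though, that $\delta$ is a half-integer when $k$ is odd, and the count of at most $d+1$ values relies on the parity of $k$ being fixed by $\abs{\beta}$.

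Your route is shorter and sharper. It would remove the factor $d+1$ from Proposition~\ref{proposition:halfingSubstitution}, and hence the $(d+1)^2$ inside the logarithm in $\Nexpl$. The paper's $d+1$ is an artifact of bounding each binomial coefficient by $2^m$ separately instead of summing them.
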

\begin{proof}
  First note that for all $b\in \NN_0$ and $e\in \{0,1\}$ the maps
  \begin{align*}
    \set[\big]{a\in \NN_0}{a+b\in 2\NN_0} \ni a
    \mapsto
    \biggl\lfloor \frac{a+e}{2} \biggr\rfloor \in \NN_0
  \shortintertext{and}
    \set[\big]{a\in \NN_0}{a+b\in 2\NN_0} \ni a
    \mapsto
    \biggl\lceil \frac{a+e}{2} \biggr\rceil \in \NN_0
  \end{align*}
  are injective: For every $n \in \NN_0$ there are at most two preimages (one odd, one even) under the maps
  $\NN_0 \ni a \mapsto \lfloor \frac{a+e}{2} \rfloor \in \NN_0$
  and
  $\NN_0 \ni a \mapsto \lceil \frac{a+e}{2} \rceil \in \NN_0$,
  but at most one of them fulfills the condition $a+b\in 2\NN_0$.
  Therefore the map
  \begin{equation*}
    \mathcal A_\beta(k)
    \ni
    \alpha
    \mapsto
    \gamma(\alpha)
    \coloneqq
    \biggl\lfloor\frac{\alpha+\nu}2\biggr\rfloor
    \in
    \NN_0^d
  \end{equation*}
  is injective. From $\abs{\alpha+\nu} = k + \abs\nu$ for all $\alpha \in \mathcal{A}_\beta(k)$
  it follows that $\frac{k + \abs\nu - d}2 \le \abs{\lfloor\frac{\alpha+\nu}2\rfloor} \le \frac{k + \abs\nu}2$,
  and therefore $\lceil\frac{k + \abs\nu - d}2\rceil \le \abs{\lfloor\frac{\alpha+\nu}2\rfloor} \le \lfloor\frac{k + \abs\nu}2\rfloor$,
  so
  \begin{equation*}
    \biggl\lfloor\frac{k}2\biggr\rfloor - \biggl\lfloor\frac{d}2\biggr\rfloor
    \le
    \biggl\lceil\frac{k - d}2\biggr\rceil
    \le
    \biggl\lceil\frac{k + \abs\nu - d}2\biggr\rceil
    \le
    \abs[\bigg]{\biggl\lfloor\frac{\alpha+\nu}2\biggr\rfloor}
    \le
    \biggl\lfloor\frac{k + \abs\nu}2\biggr\rfloor
    \le
    \biggl\lfloor\frac{k + d}2\biggr\rfloor
    \le
    \biggl\lfloor\frac{k}2\biggr\rfloor+\biggl\lceil\frac{d}2\biggr\rceil
    .
  \end{equation*}
  Similarly, the map
  \begin{equation*}
    \mathcal K_\beta(n,N)
    \ni
    k
    \mapsto
    \ell(k)
    \coloneqq
    n- \biggl\lceil\frac{k}2\biggr\rceil
    \in
    \NN_0
  \end{equation*}
  is injective, and
  \begin{align*}
    N
    =
    n- \biggl\lceil \frac{2n-2N-1}2\biggr\rceil
    \le
    n- \biggl\lceil\frac{k}2\biggr\rceil
    \le
    n
  \end{align*}
  holds for all $k\in \mathcal K_\beta(n,N)$.
  We obtain \eqref{eq:halfingSubstitution} by substituting $\gamma$ for $\lfloor\frac{\alpha+\nu}2\rfloor$
  (we may add some positive terms in order to obtain a reasonable description of the new range of
  the summation after the substitution),
  substituting $\ell$ for $n- \lceil\frac{k}2\rceil$, and performing some simplifications
  (in particular we apply the multinomial theorem~\eqref{eq:multinomial}):
  \begin{align*}
    \sum_{k\in \mathcal K_\beta(n,N)}
    &\sum_{\alpha \in \mathcal A_\beta(k)}
    \frac{(x\cdot x)^{n-k}}{(n-k)!}
    \biggl(\frac{1}{2(1-\lambda^2)}\biggr)^{n-k + \abs{\lfloor\frac{\alpha+\nu}2\rfloor}}\,
    \frac{x^{2\lfloor\frac{\alpha+\nu}2\rfloor}}{\lfloor\frac{\alpha+\nu}2\rfloor!}
    \preccurlyeq
    \\
    &\preccurlyeq
    \sum_{\Delta = -\lfloor d/2\rfloor}^{\lceil d/2\rceil}
    \sum_{k\in \mathcal K_\beta(n,N)}
    \sum_{\substack{\gamma \in \NN_0^d \\ \abs\gamma = \lfloor k/2 \rfloor + \Delta}}
    \frac{(x\cdot x)^{n-k}}{(n-k)!}
    \biggl(\frac{1}{2(1-\lambda^2)}\biggr)^{n-k + \lfloor k/2 \rfloor + \Delta}\,
    \frac{x^{2\gamma}}{\gamma!}
    \\
    &=
    \sum_{\Delta = -\lfloor d/2\rfloor}^{\lceil d/2\rceil}
    \sum_{\substack{k\in \mathcal K_\beta(n,N) \\ \lfloor k/2 \rfloor + \Delta \ge 0}}
    \frac{(x\cdot x)^{n-k}}{(n-k)!}
    \biggl(\frac{1}{2(1-\lambda^2)}\biggr)^{n-k + \lfloor k/2 \rfloor + \Delta}\,
    \frac{(x\cdot x)^{\lfloor k/2 \rfloor + \Delta}}{(\lfloor k/2 \rfloor + \Delta)!}
    \\
    &=
    \sum_{\Delta = -\lfloor d/2\rfloor}^{\lceil d/2\rceil}
    \sum_{\substack{k\in \mathcal K_\beta(n,N) \\ \lfloor k/2 \rfloor + \Delta \ge 0}}
    \frac{(x\cdot x)^{n-k+\lfloor k/2 \rfloor + \Delta}}{(n-k+\lfloor k/2 \rfloor + \Delta)!} \binom{n-k+\lfloor k/2 \rfloor + \Delta}{n-k}
    \biggl(\frac{1}{2(1-\lambda^2)}\biggr)^{n-k + \lfloor k/2 \rfloor + \Delta}\,
    \\
    &\stackrel{\text{(a)}}{\preccurlyeq}
    \sum_{\Delta = -\lfloor d/2\rfloor}^{\lceil d/2\rceil}
    \sum_{k\in \mathcal K_\beta(n,N) }
    \frac{(x\cdot x)^{n-\lceil k/2 \rceil + \Delta}}{(n-\lceil k/2 \rceil + \Delta)!} \binom{n-\lceil k/2 \rceil + \Delta}{n-k}
    \biggl(\frac{1}{2(1-\lambda^2)}\biggr)^{n-\lceil k/2 \rceil + \Delta}\,
    \\
    &\preccurlyeq
    \sum_{\Delta = -\lfloor d/2\rfloor}^{\lceil d/2\rceil}
    \sum_{k\in \mathcal K_\beta(n,N) }
    \frac{(x\cdot x)^{n-\lceil k/2 \rceil + \Delta}}{(n-\lceil k/2 \rceil + \Delta)!}
    \biggl(\frac{1}{1-\lambda^2}\biggr)^{n-\lceil k/2 \rceil + \Delta}\,
    \\
    &=
    \sum_{\Delta = -\lfloor d/2\rfloor}^{\lceil d/2\rceil}\,
    \sum_{\ell = N}^n
    \frac{ (x\cdot x)^{\ell + \Delta}}{(\ell + \Delta)!}
    \biggl(\frac{1}{1-\lambda^2}\biggr)^{\ell + \Delta}\,
    \\
    &\preccurlyeq
    (d+1)
    \sum_{m = N-\lfloor d/2\rfloor}^{n+\lceil d/2\rceil}
    \frac{ (x\cdot x)^m}{m!}
    \biggl(\frac{1}{1-\lambda^2}\biggr)^m
  \end{align*}
  where (a) is not an equality because we drop the restriction $\lfloor k/2 \rfloor + \Delta \ge 0$;
  this does not introduce ill-defined terms
  because $n - \lceil k/2 \rceil + \Delta \ge N + \Delta \ge \lfloor d/2 \rfloor + \Delta \ge 0$
  due the assumption that $N \ge \lfloor d/2 \rfloor$.
\end{proof}

\begin{proposition} \label{proposition:halfingSubstitution}
  Let $M,N\in \NN_0$, $p\in \RR[x_1,\dots,x_d]_{2M}$, and $\lambda \in {[0,1[}$.
  Assume that $N \ge \lfloor d / 2 \rfloor$. Then
  \begin{equation}
    \Tail_{N,\lambda}(p)
    \preccurlyeq
    (d+1)\,2^{d+1}\,\norm{p}
    \sum_{m = N-\lfloor d/2\rfloor}^{2N+\lceil d/2\rceil}
    \frac{ (x\cdot x)^m}{m!}
    \biggl(\frac{4}{1-\lambda^2}\biggr)^m
    .
  \end{equation}
\end{proposition}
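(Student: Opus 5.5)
We combine Proposition~\ref{proposition:CIntegralEstimate} with Lemma~\ref{lemma:halfingSubstitution} and then collapse the remaining outer sums with crude bounds.

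First, Proposition~\ref{proposition:CIntegralEstimate} gives an upper bound for $\Tail_{N,\lambda}(p)$ as an outer sum over $\nu\in\{0,1\}^d$, $\beta$ with $\abs\beta\le 2M$, and $n\in\{N+1,\dots,2N\}$. Each term carries the weight $\abs{\coefficient{p}{\beta}}(\beta!)^{1/2}\,2^{n-d}$ and multiplies exactly the inner double sum over $k\in\mathcal K_\beta(n,N)$ and $\alpha\in\mathcal A_\beta(k)$ that appears on the left of \eqref{eq:halfingSubstitution}. Since $N\ge\lfloor d/2\rfloor$, Lemma~\ref{lemma:halfingSubstitution} applies to each inner sum. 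All weights are nonnegative, and $\preccurlyeq$ is compatible with addition and with multiplication by nonnegative scalars. Hence we may replace each inner sum by
\[
(d+1)\sum_{m=N-\lfloor d/2\rfloor}^{n+\lceil d/2\rceil}\frac{(x\cdot x)^m}{m!}(1-\lambda^2)^{-m}.
\]
This replacement no longer depends on $\nu$ or $\beta$.

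Second, we carry out the outer sums. The sum over $\nu$ contributes a factor $2^d$, which cancels the $2^{-d}$. The sum over $\beta$ produces $\sum_\beta\abs{\coefficient{p}{\beta}}(\beta!)^{1/2}=\norm{p}$ by Definition~\ref{definition:norm}. What remains is
\[
(d+1)\norm{p}\sum_{n=N+1}^{2N}2^n\sum_{m=N-\lfloor d/2\rfloor}^{n+\lceil d/2\rceil}\frac{(x\cdot x)^m}{m!}(1-\lambda^2)^{-m}.
\]

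Third, we interchange the order of summation. Each term $(x\cdot x)^m/m!$ is a sum of squares, so enlarging ranges and coefficients is allowed. For fixed $m$ in the range from $N-\lfloor d/2\rfloor$ to $2N+\lceil d/2\rceil$, its coefficient is at most $\sum_{n=N+1}^{2N}2^n\le 2^{2N+1}$. This must be absorbed into $2\cdot 4^m$ together with the remaining factor $2^d$. Here $m\ge N-\lfloor d/2\rfloor$ gives $4^m\ge 2^{2N-2\lfloor d/2\rfloor}\ge 2^{2N-d}$, so $2^{2N+1}\le 2^{d+1}4^m$. This is the source of the constant $2^{d+1}$ and of replacing $(1-\lambda^2)^{-1}$ by $4/(1-\lambda^2)$.

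The main obstacle is this final bookkeeping. A cruder bound on $\sum_n 2^n$ would lose a factor $2^N$, so we must use $4^m$ with $m$ bounded below by $N-\lfloor d/2\rfloor$. It is also worth checking that for $m$ near $N-\lfloor d/2\rfloor$ the bound $2^{2N+1}\le 2^{d+1}4^m$ still holds, which it does since $2\lfloor d/2\rfloor\le d$. The rest is the direct substitution described above.
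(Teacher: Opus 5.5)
Your proposal is correct and follows the paper's proof essentially step by step. You apply Lemma~\ref{lemma:halfingSubstitution} inside the bound of Proposition~\ref{proposition:CIntegralEstimate}, collapse the $\nu$- and $\beta$-sums to $2^d$ and $\norm{p}$, bound $\sum_{n=N+1}^{2N}2^n\le 2^{2N+1}$ after extending the $m$-range to $2N+\lceil d/2\rceil$, and absorb $2^{2N+1}\le 2^{2m+d+1}$ using $m\ge N-\lfloor d/2\rfloor$.
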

\begin{proof}
  We apply the previous Lemma~\ref{lemma:halfingSubstitution} to the estimate for
  $\Tail_{N,\lambda}(p)$ from Proposition~\ref{proposition:CIntegralEstimate},
  and then simplify the result:
  \begin{align*}
    \Tail_{N,\lambda}(p)
    &\preccurlyeq
    \sum_{\nu\in\{0,1\}^d}
    \sum_{\substack{\beta\in \NN_0^d \\ \abs{\beta} \le 2M}}
    \abs{\coefficient{p}{\beta}}\,(\beta!)^{\frac{1}{2}}
    \sum_{n=N+1}^{2N}
    2^{n-d}
    (d+1)
    \sum_{m = N-\lfloor d/2\rfloor}^{n+\lceil d/2\rceil}
    \frac{ (x\cdot x)^m}{m!}
    \biggl(\frac{1}{1-\lambda^2}\biggr)^m
    \\
    &=
    (d+1)\,\norm{p}
    \sum_{n=N+1}^{2N}
    2^{n}
    \sum_{m = N-\lfloor d/2\rfloor}^{n+\lceil d/2\rceil}
    \frac{ (x\cdot x)^m}{m!}
    \biggl(\frac{1}{1-\lambda^2}\biggr)^m
    \\
    &\preccurlyeq
    (d+1)\,\norm{p}\,2^{2N+1}
    \sum_{m = N-\lfloor d/2\rfloor}^{2N+\lceil d/2\rceil}
    \frac{ (x\cdot x)^m}{m!}
    \biggl(\frac{1}{1-\lambda^2}\biggr)^m
    \\
    &\preccurlyeq
    (d+1)\,\norm{p}
    \sum_{m = N-\lfloor d/2\rfloor}^{2N+\lceil d/2\rceil}
    2^{2m+d+1}
    \frac{ (x\cdot x)^m }{ m! }
    \biggl(\frac{1}{1-\lambda^2}\biggr)^m
    \\
    &\preccurlyeq
    (d+1)\,2^{d+1}\,\norm{p}
    \sum_{m = N-\lfloor d/2\rfloor}^{2N+\lceil d/2\rceil}
    \frac{ (x\cdot x)^m }{ m! }
    \biggl(\frac{4}{1-\lambda^2}\biggr)^m
    .
  \end{align*}
\end{proof}
For $m\to \infty$, the factor $\frac{1}{m!} \bigl( 4 / (1-\lambda^2)\bigr)^m$ tends to $0$. For a quantitative
result, we apply a variant of Stirling's formula, see \cite{robbins} or \cite{JamesonStirling}:
\begin{equation}
  \label{eq:stirling}
  \Gamma(1+\xi) > \sqrt{2\pi \xi } \,\biggl( \frac{\xi}{\E} \biggr)^\xi
  \quad\quad\text{for all $\xi \in {]0,\infty[}$}
\end{equation}
where $\E$ is Euler's number and $\Gamma \colon {]0,\infty[} \to {]0,\infty[}$ the unique logarithmically convex function
that fulfills the fundamental equation $\Gamma(1+\xi) = \xi \,\Gamma(\xi)$
for all $\xi \in {]0,\infty[}$ and $\Gamma(1) = 1$ (this is the characterization of the Gamma function from the Bohr–Mollerup theorem).
In particular $\Gamma(1+n) = n!$ for all $n\in \NN_0$.

\begin{lemma} \label{lemma:Gamma}
  Let $\xi \in {]0,\infty[}$ and $\eta \in {[0,\infty[}$. Then the identity $\Gamma(1+\xi+\eta) \ge \xi^\eta\, \Gamma(1+\xi)$ holds.
\end{lemma}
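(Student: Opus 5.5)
We want $\Gamma(1+\xi+\eta) \ge \xi^\eta\,\Gamma(1+\xi)$ for $\xi>0$, $\eta\ge0$. The natural route is logarithmic convexity of $\Gamma$, exactly as provided by the Bohr–Mollerup characterization recalled before \eqref{eq:stirling}. Set $f \coloneqq \ln\circ\Gamma$ on ${]0,\infty[}$, a convex function. The claim is equivalent to
\begin{equation*}
  f(1+\xi+\eta) - f(1+\xi) \ge \eta \ln \xi .
\end{equation*}
For $\eta = 0$ there is nothing to prove, so we assume $\eta>0$.

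First we get a lower bound on the slopes of $f$ to the right of $1+\xi$. By convexity, difference quotients of $f$ are nondecreasing in both endpoints. Hence
\begin{equation*}
  \frac{f(1+\xi+\eta)-f(1+\xi)}{\eta} \ge \frac{f(1+\xi)-f(\xi)}{1}.
\end{equation*}
This uses the three points $\xi < 1+\xi < 1+\xi+\eta$; all lie in ${]0,\infty[}$ because $\xi>0$. The functional equation $\Gamma(1+\xi) = \xi\,\Gamma(\xi)$ gives $f(1+\xi)-f(\xi) = \ln \xi$. Multiplying by $\eta>0$ yields the displayed inequality, and exponentiating gives the claim.

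The only point needing care is the monotonicity of slopes for the secant over $[\xi,1+\xi]$ versus the secant over $[1+\xi,1+\xi+\eta]$. This is the standard three-chord lemma for convex functions: for $a<b<c$ one has $\frac{f(b)-f(a)}{b-a} \le \frac{f(c)-f(b)}{c-b}$. It follows by writing $b$ as a convex combination of $a$ and $c$ and applying the convexity inequality. No other obstacle is expected. In particular, the argument handles non-integer $\eta$ and small $\xi$ (including $\xi<1$, where $\ln\xi<0$) uniformly, with no case distinction.
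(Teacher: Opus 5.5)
Your argument is correct, and it takes a more direct route than the paper's. Both proofs use only log-convexity of $\Gamma$ and the functional equation; they differ in how $\eta$ is handled.

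The paper splits $\eta = \lfloor \eta \rfloor + \lambda$ with $\lambda \in {[0,1[}$. It applies log-convexity once, to the convex combination $1+\xi = \lambda(\xi+\lambda) + (1-\lambda)(1+\xi+\lambda)$. Together with the functional equation this gives $(\xi+\lambda)^\lambda\,\Gamma(1+\xi) \le \Gamma(1+\xi+\lambda)$. The integer part of $\eta$ is then covered exactly by iterating $\Gamma(1+s) = s\,\Gamma(s)$, using $j+\xi+\lambda \ge \xi$ and $(\xi+\lambda)^\lambda \ge \xi^\lambda$.

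You instead use the three-chord lemma for $\ln\circ\Gamma$. It compares the secant slope over $[1+\xi,1+\xi+\eta]$ directly with the slope $\ln\xi$ over $[\xi,1+\xi]$. A single convexity inequality therefore handles all of $\eta$ at once, with no integer/fractional-part bookkeeping; only the trivial case $\eta = 0$ is set aside.

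The paper's version yields the slightly sharper bound $\Gamma(1+\xi+\eta) \ge (\xi+\lambda)^\lambda \prod_{j=1}^{\lfloor\eta\rfloor}(j+\xi+\lambda)\,\Gamma(1+\xi)$. That extra strength is never used in Lemma~\ref{lemma:stirlingGamma}, so your shorter proof is fully sufficient there.
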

\begin{proof}
  Set $\lambda \coloneqq \eta - \lfloor \eta \rfloor \in {[0,1[}$.
  Note that $1+\xi = \lambda(\xi+\lambda) + (1-\lambda)(1+\xi+\lambda)$. By logarithmic convexity of $\Gamma$ it follows that
  \begin{align*}
    \ln\bigl(\Gamma(1+\xi)\bigr)
    &\le
    \lambda \ln\bigl(\Gamma(\xi+\lambda)\bigr)
    +
    (1-\lambda) \ln\bigr(\Gamma(1+\xi+\lambda)\bigr)
    ,
  \shortintertext{or equivalently}
    \Gamma(1+\xi)
    &\le
    \Gamma(\xi+\lambda)^\lambda\,\Gamma(1+\xi+\lambda)^{1-\lambda}
    .
  \end{align*}
  Multiplication with $(\xi+\lambda)^\lambda$ and application of the fundamental equation $(\xi+\lambda)\, \Gamma(\xi+\lambda) = \Gamma(1+\xi+\lambda)$
  then yields $(\xi+\lambda)^\lambda \,\Gamma(1+\xi) \le \Gamma(1+\xi+\lambda)$. Therefore
  \begin{align*}
    \xi^\eta \,\Gamma(1+\xi)
    &\le
    \Bigl(\prod\nolimits_{j=1}^{\lfloor \eta\rfloor} (j+\xi+\lambda)\Bigr)\,(\xi+\lambda)^\lambda\, \Gamma(1+\xi)
    \\
    &\le
    \Bigl(\prod\nolimits_{j=1}^{\lfloor \eta\rfloor} (j+\xi+\lambda)\Bigr)\, \Gamma(1+\xi+\lambda)
    \\
    &=
    \Gamma\bigl(1+\lfloor\eta\rfloor + \xi+\lambda \bigr)
    \\
    &=
    \Gamma(1+\xi+\eta)
  \end{align*}
  where we apply the fundamental equation $(j+\xi+\lambda)\, \Gamma(j+\xi+\lambda) = \Gamma(1+j+\xi+\lambda)$
  with $j \in \{1,\dots,\lfloor \eta \rfloor\}$.
\end{proof}

\begin{lemma} \label{lemma:stirlingGamma}
  Let $a,b \in {]0,\infty[}$ and $m\in \NN_0$. Assume
  \begin{equation}
    m \ge \E b + \max\biggl\{ 0, \ln \biggl( \frac{a}{\sqrt{b}} \biggr) \biggr\}
    ,
  \end{equation}
  where $\E$ is Euler's number. Then $m! \ge ab^m$.
\end{lemma}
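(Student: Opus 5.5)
The plan is to compare $m! = \Gamma(1+m)$ with the Stirling lower bound \eqref{eq:stirling}, evaluated not at $m$ itself but at the smaller point $\xi \coloneqq \E b$. The excess $\eta \coloneqq m - \E b$ is then absorbed with Lemma~\ref{lemma:Gamma}. Note that $\xi > 0$ because $b>0$. The hypothesis gives
\begin{equation*}
  \eta \ge \max\bigl\{0, \ln\bigl(a/\sqrt{b}\bigr)\bigr\} \ge 0 .
\end{equation*}

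\emph{Step 1.} Apply Lemma~\ref{lemma:Gamma} with this $\xi$ and $\eta$, which yields
\begin{equation*}
  m! = \Gamma(1+\xi+\eta) \ge (\E b)^{\eta}\,\Gamma(1+\E b).
\end{equation*}

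\emph{Step 2.} Apply \eqref{eq:stirling} at $\xi = \E b$. The choice $\xi = \E b$ is made exactly so that $\xi/\E = b$:
\begin{equation*}
  \Gamma(1+\E b) > \sqrt{2\pi \E b}\,\Bigl(\frac{\E b}{\E}\Bigr)^{\E b} = \sqrt{2\pi\E b}\; b^{\E b}.
\end{equation*}

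\emph{Step 3.} Combine the two bounds and use $\eta + \E b = m$:
\begin{equation*}
  m! \ge \E^{\eta}\, b^{\eta}\, b^{\E b}\,\sqrt{2\pi\E b} = \E^{\eta}\sqrt{2\pi \E b}\; b^m .
\end{equation*}
It therefore remains to show that $\E^{\eta}\sqrt{2\pi\E b} \ge a$.

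\emph{Step 4.} Split into two cases.
\begin{itemize}
  \item If $\ln(a/\sqrt b) \ge 0$, then $\E^\eta \ge a/\sqrt b$. Hence $\E^{\eta}\sqrt{2\pi\E b} \ge a\sqrt{2\pi \E} \ge a$.
  \item If $\ln(a/\sqrt b) < 0$, i.e. $a < \sqrt b$, then $\eta \ge 0$ gives $\E^{\eta}\sqrt{2\pi\E b} \ge \sqrt{2\pi \E}\,\sqrt b > a$.
\end{itemize}
In both cases $m! \ge a b^m$, which is the claim.

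There is no real obstacle. The only point needing care is the choice to evaluate Stirling's bound at $\E b$ rather than at $m$: this is what lets the Gamma-function monotonicity of Lemma~\ref{lemma:Gamma} produce exactly the factor $b^{\eta}$ together with the spare factor $\E^{\eta}$. That spare factor is what the logarithmic term in the hypothesis is designed to pay for.
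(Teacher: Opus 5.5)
Your proof is correct and is essentially the paper's proof: Lemma~\ref{lemma:Gamma} with $\xi = \E b$ and $\eta = m-\E b$, then Stirling's bound at $\E b$, giving $m! \ge \E^{\eta}\sqrt{2\pi\E b}\,b^m$. The only difference is your case split in Step~4, which is unnecessary: $\eta \ge \ln(a/\sqrt b)$ holds regardless of the sign of $\ln(a/\sqrt b)$, so $\E^{\eta} \ge a/\sqrt{b} \ge a/\sqrt{2\pi\E b}$ directly.
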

\begin{proof}
  Note that $m\ge \E b > 0$ and $m- \E b \ge \ln\bigl( a / \sqrt{b} \bigr) \ge \ln\bigl( a / \sqrt{2\pi \E b} \bigr)$.
  We combine the previous Lemma~\ref{lemma:Gamma} for $\xi \coloneqq \E b > 0$ and $\eta \coloneqq m-\E b \ge 0$
  with Stirling's formula~\eqref{eq:stirling}:
  \begin{equation*}
    m!
    =
    \Gamma(1+m) \ge (\E b)^{m-\E b} \, \Gamma(1+\E b)
    \ge
    (\E b)^{m-\E b}\sqrt{2\pi \E b} \,b^{\E b}
    =
    \E^{m-\E b}\sqrt{2\pi \E b}\, b^m \ge a b^m
    .
  \end{equation*}
\end{proof}
We can now prove the second estimate \eqref{eq:secondEstimate}:

\begin{proposition} \label{proposition:secondEstimate}
  Let $M,N\in \NN_0$, $p\in \RR[x_1,\dots,x_d]_{2M}$, and $\lambda,t \in {[0,1[}$.
  Assume that
  \begin{equation}
    \label{eq:proposition:secondEstimate:M}
    N
    \ge
    \biggl\lfloor \frac{d}{2}\biggr\rfloor
    +
    \E \biggl(\frac{ 4 }{1-\lambda^2} \biggr)^{\frac{1}{1-t}}
    +
    \max\Biggl\{
      0
      ,
      \frac{
        \ln\bigl( (d+1)2^d\norm{p} \sqrt{1-\lambda^2} \bigr)
      }{1-t}
    \Biggr\}
    .
  \end{equation}
  Then
  \begin{equation}
    \Tail_{N,\lambda}(p)
    \preccurlyeq
    \sum_{m = N-\lfloor d/2\rfloor}^{2N+\lceil d/2\rceil}
    \frac{ (x\cdot x)^m}{(m!)^t}
    .
  \end{equation}
\end{proposition}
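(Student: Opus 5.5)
The plan is to start from Proposition~\ref{proposition:halfingSubstitution} and then compare the resulting sum with the target sum term by term. Using that proposition requires $N \ge \lfloor d/2 \rfloor$. This follows from \eqref{eq:proposition:secondEstimate:M}, because the other two summands on its right-hand side are nonnegative. We therefore get
\begin{equation*}
  \Tail_{N,\lambda}(p)
  \preccurlyeq
  \sum_{m = N-\lfloor d/2\rfloor}^{2N+\lceil d/2\rceil}
  (d+1)\,2^{d+1}\,\norm{p}\,\frac{b^m}{m!}\,(x\cdot x)^m,
  \quad\quad
  b \coloneqq \frac{4}{1-\lambda^2}.
\end{equation*}
Each $(x\cdot x)^m$ is a square, hence a sum of squares. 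So it suffices to prove the scalar inequality $(d+1)2^{d+1}\norm{p}\, b^m / m! \le 1/(m!)^t$ for every $m \ge N - \lfloor d/2\rfloor$. If $p = 0$ this is trivial, since then $\Tail_{N,\lambda}(p)=0$ by the definition \eqref{eq:TailDef}. We may therefore assume $\norm{p}>0$, so that the logarithm in \eqref{eq:proposition:secondEstimate:M} is well-defined.

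Since $1-t>0$, the scalar inequality is equivalent to $m! \ge a B^m$, where
\begin{equation*}
  a \coloneqq \bigl((d+1)2^{d+1}\norm{p}\bigr)^{\frac{1}{1-t}},
  \quad\quad
  B \coloneqq b^{\frac{1}{1-t}} = \biggl(\frac{4}{1-\lambda^2}\biggr)^{\frac{1}{1-t}}.
\end{equation*}
This is exactly the form handled by Lemma~\ref{lemma:stirlingGamma}. That lemma gives $m! \ge a B^m$ whenever $m \ge \E B + \max\{0,\ln(a/\sqrt B)\}$. We compute
\begin{equation*}
  \ln\frac{a}{\sqrt B}
  = \frac{1}{1-t}\Bigl(\ln\bigl((d+1)2^{d+1}\norm{p}\bigr) - \tfrac12\ln 4 + \tfrac12\ln(1-\lambda^2)\Bigr)
  = \frac{\ln\bigl((d+1)2^{d}\norm{p}\sqrt{1-\lambda^2}\bigr)}{1-t},
\end{equation*}
which is precisely the logarithmic term in \eqref{eq:proposition:secondEstimate:M}.

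Hypothesis \eqref{eq:proposition:secondEstimate:M} therefore says that the smallest index $m = N-\lfloor d/2\rfloor$ satisfies the condition of Lemma~\ref{lemma:stirlingGamma}. The condition is a lower bound on $m$ alone, so every larger $m$ in the summation range also satisfies it. Hence each coefficient is dominated as required, and summing the resulting termwise relations under $\preccurlyeq$ gives the claim.

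The only step that needs care is the bookkeeping that matches the logarithmic term with $\ln(a/\sqrt B)$, i.e.\ the cancellation of the factor $2$ against $\sqrt 4$ and the appearance of $\sqrt{1-\lambda^2}$. Apart from that, and from treating the degenerate case $p=0$ separately, the argument is a direct combination of Proposition~\ref{proposition:halfingSubstitution} and Lemma~\ref{lemma:stirlingGamma}.
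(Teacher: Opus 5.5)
Your proposal is correct and follows the paper's proof essentially verbatim. The paper also combines Proposition~\ref{proposition:halfingSubstitution} with Lemma~\ref{lemma:stirlingGamma}, using the same constants $a$ and $B$ (the paper calls the latter $b$), and you additionally spell out the check $N\ge\lfloor d/2\rfloor$ and the identification of $\ln(a/\sqrt{B})$ with the logarithmic term; one small slip is that for odd $m$ and $d\ge 2$, $(x\cdot x)^m$ is a sum of squares rather than a single square, but a sum of squares is all your argument needs.
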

\begin{proof}
  This is certainly true if $p=0$. Otherwise, set
  \begin{equation*}
    a \coloneqq \bigl( (d+1)\,2^{d+1}\,\norm{p} \bigr)^{\frac{1}{1-t}} \in {]0,\infty[}
    \quad\quad\text{and}\quad\quad
    b \coloneqq \biggl( \frac{4}{1-\lambda^2} \biggr)^{\frac{1}{1-t}} \in {]0,\infty[}
  \end{equation*}
  and note that assumption \eqref{eq:proposition:secondEstimate:M}
  is equivalent to $N - \lfloor d/2 \rfloor \ge \E b + \max \bigl\{0, \ln(a/\sqrt{ b}) \bigr\}$.
  Then by Proposition~\ref{proposition:halfingSubstitution} and by the previous Lemma~\ref{lemma:stirlingGamma}, the estimate
  \begin{align*}
    \Tail_{N,\lambda}(p)
    &\preccurlyeq
    (d+1)\,2^{d+1}\,\norm{p}
    \sum_{m = N-\lfloor d/2\rfloor}^{2N+\lceil d/2\rceil}
    \frac{ (x\cdot x)^m}{m!}
    \biggl( \frac{4}{1-\lambda^2} \biggr)^m
    \\
    &=
    \sum_{m = N-\lfloor d/2\rfloor}^{2N+\lceil d/2\rceil}
    \frac{ (x\cdot x)^m}{(m!)^t}\,
    \biggl( \frac{ab^m}{m!} \biggr)^{1-t}
    \\
    &\preccurlyeq
    \sum_{m = N-\lfloor d/2\rfloor}^{2N+\lceil d/2\rceil}
    \frac{ (x\cdot x)^m}{(m!)^t}\,
  \end{align*}
  holds.
\end{proof}

\subsection{Synthesis}
\label{sec:synthesis}
We can now apply the explicit estimates obtained in Sections~\ref{sec:firstEstimate} and \ref{sec:secondEstimate}.

\begin{proof}[of Theorem~\ref{theorem:effective}]
  Let $M\in \NN$ and $p\in \RR[x_1,\dots,x_d]_{2M}$, and assume that $p(\xi) \ge 0$ for all $\xi \in \RR^d$.
  Let $t\in {[0,1[}$.
  It is sufficient to treat only the special case $\epsilon = 1$;
  the case of general $\epsilon \in {]0,\infty[}$ then follows by applying this special case to the rescaled polynomial $p/\epsilon$.
  By Proposition~\ref{proposition:SOS} and Corollary~\ref{corollary:rad2infty},
  \begin{equation*}
    0
    \preccurlyeq
    \IOp_{N,\lambda,\infty}(p)
    =
    p
    +
    \sum_{\substack{\alpha\in \NN_0^d \\ \abs{\alpha} \le 2M}}
    \frac{(\lambda^{\abs \alpha}-1) \,H_\alpha\,\skal{H_\alpha}{p}_\infty}{2^{\abs \alpha} \alpha!}
    +
    \Tail_{N,\lambda}(p)
    \quad\text{for all $\lambda \in {[0,1[}$ and $N\in \NN_0$, $N \ge M$,}
  \end{equation*}
  with $\Tail_{N,\lambda}(p)$ from \eqref{eq:TailDef}.
  Define $\mu \in {]0,\infty[}$ like in \eqref{eq:effective:mu} or equivalently \eqref{eq:firstEstimate:mu}.
  If $\mu\,\norm{p} \le 1$, then condition \eqref{eq:proposition:firstEstimate:lambda}
  of Proposition~\ref{proposition:firstEstimate} holds for all $\lambda \in {[0,1]}$
  (note that we assume $M\ge 1$ here), so
  \begin{equation*}
    \sum_{\substack{\alpha \in \NN_0^d \\ \abs{\alpha} \le 2M}} \frac{(\lambda^{\abs{\alpha}}-1)\,H_\alpha\, \skal{H_\alpha}{p}_\infty }{2^{\abs \alpha} \alpha!}
    \preccurlyeq
    \sum_{m = 0}^{M+\lfloor d/2 \rfloor} \frac{(x\cdot x)^m}{(m!)^t}
  \end{equation*}
  for all $\lambda \in {[0,1]}$ by Proposition~\ref{proposition:firstEstimate}, and in particular for $\lambda = 0$.
  As $\Tail_{N,0}(p) = 0$ for all $N\in \NN_0$ by Proposition~\ref{proposition:TailExpansion},
  it then follows that
  \begin{equation*}
    0
    \preccurlyeq
    \IOp_{M,0,\infty}(p)
    =
    p
    +
    \sum_{\substack{\alpha\in \NN_0^d \\ \abs{\alpha} \le 2M}}
    \frac{(\lambda^{\abs \alpha}-1)\, H_\alpha\,\skal{H_\alpha}{p}_\infty}{2^{\abs \alpha} \alpha!}
    +
    \Tail_{M,0}(p)
    \preccurlyeq
    p
    +
    \sum_{m = 0}^{M+\lfloor d/2 \rfloor} \frac{(x\cdot x)^m}{(m!)^t}
    .
  \end{equation*}
  As $M+\lfloor d/2 \rfloor \le \Nexpl(p)$, this completes the proof in the case $\mu\,\norm{p} \le 1$.
  Otherwise $\mu\,\norm{p} > 1$. In this case
  \begin{equation*}
    \sum_{\substack{\alpha \in \NN_0^d \\ \abs{\alpha} \le 2M}} \frac{(\lambda^{\abs{\alpha}}-1)\,H_\alpha\, \skal{H_\alpha}{p}_\infty }{2^{\abs \alpha} \alpha!}
    \preccurlyeq
    \sum_{m = 0}^{M+\lfloor d/2 \rfloor} \frac{(x\cdot x)^m}{(m!)^t}
    \quad\quad\text{for}\quad\quad
    \lambda \coloneqq \biggl(1-\frac{1}{\mu\,\norm{p}}\biggr)^{\!\frac12}
  \end{equation*}
  by Corollary~\ref{corollary:firstEstimate}, and
  \begin{equation*}
    \Tail_{N,\lambda}(p)
    \preccurlyeq
    \sum_{m = N-\lfloor d/2\rfloor}^{2N+\lceil d/2\rceil}
    \frac{ (x\cdot x)^m}{(m!)^t}
  \end{equation*}
  by Proposition~\ref{proposition:secondEstimate} for all those $N\in \NN_0$ that fulfill
  \begin{equation*}
    N
    \ge
    \biggl\lfloor \frac{d}{2}\biggr\rfloor
    +
    \E \biggl(\frac{ 4 }{1-\lambda^2} \biggr)^{\frac{1}{1-t}}
    +
    \max\Biggl\{
      0
      ,
      \frac{
        \ln\bigl( (d+1)2^d\norm{p} \sqrt{1-\lambda^2} \bigr)
      }{1-t}
    \Biggr\}
    .
  \end{equation*}
  As $1-\lambda^2 = \bigl(\mu\,\norm{p}\bigr)^{-1}$, this condition on $N$ can equivalently be expressed as
  \begin{equation}
    \secondInternalTag
    \label{eq:effectiveInternal:N}
    N
    \ge
    \biggl\lfloor \frac{d}{2}\biggr\rfloor
    +
    \E \bigl( 4\mu\,\norm{p} \bigr)^{\frac{1}{1-t}}
    +
    \max\Biggl\{
      0
      ,
      \frac{
        \ln\bigl( (d+1)2^d\sqrt { \norm{p} / \mu } \bigr)
      }{1-t}
    \Biggr\}
    .
  \end{equation}
  In total,
  \begin{align*}
    0
    \preccurlyeq
    \IOp_{N,\lambda,\infty}(p)
    =
    p
    +
    \sum_{\substack{\alpha\in \NN_0^d \\ \abs{\alpha} \le 2M}}
    \frac{(\lambda^{\abs \alpha}-1)\, H_\alpha\,\skal{H_\alpha}{p}_\infty}{2^{\abs \alpha} \alpha!}
    +
    \Tail_{N,\lambda}(p)
    \preccurlyeq
    p
    +
    \sum_{m = 0}^{2N+\lceil d/2\rceil}
    \frac{ (x\cdot x)^m}{(m!)^t}
  \end{align*}
  provided that the conditions $M \le N$, $M+\lfloor d/2 \rfloor < N-\lfloor d/2 \rfloor$ and \eqref{eq:effectiveInternal:N} are fulfilled.
  This is the case if we choose $N \in \NN_0$ as the smallest integer that fulfills $N \ge M+d + 1$
  and \eqref{eq:effectiveInternal:N}. Then $2N + \lceil d/2 \rceil \le \Nexpl(p)$ because
  \begin{equation*}
    2\bigl(M + d + 1\bigr) + \biggl\lceil \frac d2 \biggr\rceil
    =
    2M + \biggl\lceil \frac{5d}{2} \biggr\rceil + 2
  \end{equation*}
  and because
  \begin{align*}
    2\Biggl(
    \biggl\lfloor \frac{d}{2}\biggr\rfloor
    &+
    \Biggl\lceil
    \E \bigl( 4\mu\,\norm{p} \bigr)^{\frac{1}{1-t}}
    +
    \max\Biggl\{
      0
      ,
      \frac{
        \ln\bigl( (d+1)2^d\sqrt { \norm{p} / \mu } \bigr)
      }{1-t}
    \Biggr\}
    \Biggr\rceil
    \Biggr)
    +
    \biggl\lceil \frac{d}{2}\biggr\rceil
    =
    \\
    &=
    2\Biggl\lceil
    \E \bigl( 4\mu\,\norm{p} \bigr)^{\frac{1}{1-t}}
    +
    \max\Biggl\{
      0
      ,
      \frac{
        \ln\bigl( (d+1)2^d\sqrt { \norm{p} / \mu } \bigr)
      }{1-t}
    \Biggr\}
    \Biggr\rceil
    +
    d+\biggl\lfloor \frac{d}{2}\biggr\rfloor
    \\
    &\le
    \Bigl\lceil
    2\E \bigl( 4\mu\,\norm{p} \bigr)^{\frac{1}{1-t}}
    \Bigr\rceil
    +
    \max\Biggl\{
      0
      ,
      \biggl\lceil
      \frac{1}{1-t}
      \ln\biggl( \frac{ (d+1)^2\,2^{2d}\, \norm{p} }{ \mu } \biggr)
      \biggr\rceil
    \Biggr\}
    +
    \biggl\lfloor \frac{3d}{2} \biggr\rfloor
    +
    1
    .
  \end{align*}
\end{proof}

\section{Conclusion and perspectives}
\label{sec:conclusion}
Our work focused on approximations of nonnegative polynomials by sums of squares (SOS) using a
perturbative approach inspired by Lasserre’s framework.
We developed a quantitative perturbative  Positivstellensatz on $\RR^d$ with an explicit bound on
the truncation order $N$ 
ensuring the existence of
SOS certificates of the form
\begin{equation*}
  p+\varepsilon\sum_{n=0}^N \frac{(x\cdot x)^n}{(n!)^t}\in \SOS{\RR[x_1,\dots,x_d]}, \qquad 0<t<1,
\end{equation*}
whenever $p\geq0$ on $\RR^d$.
The main contribution, Theorem \ref{theorem:effective}, provides the polynomial growth bound 
\begin{equation*}
N = O\bigl((\|p\|/\varepsilon)^{1/(1-t)}\bigr),
\end{equation*}
obtained via a positive integration kernel built from Hermite polynomials (using Mehler's formula)
in the spirit of the polynomial kernel method.
Our result strengthens the theoretical foundation for approximating nonnegative polynomials in unbounded domains,
where traditional SOS methods often face challenges.

For future work, several directions appear promising.
First, it would be interesting to extend the kernel-based
construction to other positivity settings where perturbative results are known, such as for positive noncommutative polynomials
\cite{navascues2013paradox}, trace-positive noncommutative polynomials \cite{klep2023globally}, and positive moment polynomials
\cite{klep2025sums}.

Another important future work is to obtain explicit convergence rates for the Lasserre hierarchy of \emph{upper bounds} for
unconstrained polynomial optimization with respect to a Gaussian reference measure.
To the best of our knowledge, this would yield the first explicit convergence rate guarantees for SOS-based polynomial optimization
hierarchies on noncompact domains.
Lasserre hierarchy of upper bounds is obtained by minimizing the expectation of the objective polynomial with respect to polynomial
densities that are sums of squares of bounded degree relative to a fixed reference measure supported on the domain
\cite{lasserre2011new}.
While asymptotic convergence of these upper bounds to the global minimum is guaranteed under mild assumptions, a major focus of recent work has been on understanding quantitative convergence rates
\cite{deKlerk2017,deKlerk2020,deKlerk2022}. 
In our context, we would consider the problem of minimizing a polynomial $p \in \RR[x_1,\dots,x_d]$ over $\RR^n$, i.e., computing 
\begin{align}
\label{eq:pmin}
p_{\min} = \inf_{\xi \in \RR^d } p(\xi).
\end{align} 
An alternative way to formulate \eqref{eq:pmin} is the following:
\begin{align}
\label{eq:pminmeas}
p_{\min}
=
\inf_{\Phi \in \mathcal{M}_{+}(\RR^d)}
\set[\bigg]{
  \int_{\RR^d} p(\xi) \D \Phi(\xi)
}{
  \int_{\RR^d} \D \Phi(\xi) = 1
}
,
\end{align}
where $\mathcal{M}_+(\RR^d)$ denotes the set of nonnegative Borel measures supported on $\RR^d$. 
Instead of optimizing over the full set $\mathcal{M}_+(\RR^d)$, the framework developed by Lasserre in \cite{lasserre2011new} consists of optimizing over measures of the form $\D \Phi(\xi) = \sigma(\xi) \D \Psi(\xi)$, where $\Psi$ is a fixed reference measure with support being exactly $\RR^d$, and $\sigma$ is a polynomial that is pointwise nonnegative on $\RR^d$. 
This yields the so-called hierarchy of upper bounds for $p_{\min}$, indexed by $N \in \NN_0$:
\begin{align}
\label{eq:upperhierarchy}
\ub{(p,\Psi)}_N
\coloneqq
\inf_{\sigma \in C_{N}}
\set[\bigg]{
  \int_{\RR^d} p(\xi) \sigma(\xi) \D \Psi(\xi)
}{
  \int_{\RR^d} \sigma(\xi) \D \Psi(\xi) = 1
}
,
\end{align}
where $C_{N}$ is the convex cone  $\SOS{\RR[x_1,\dots,x_d]}\cap \RR[x_1,\dots,x_d]_{2N}$. 
We would like to follow the procedure of \cite[Sec.~5]{slot2022sum} by exhibiting an explicit reference measure $\Psi$ with support equal to $\RR^d$,  and a sequence of probability densities $(\sigma_N)_{N \in \NN_0}$, where each $\sigma_N \in C_N$, and for which the difference
$$
\int_{\RR^d} p(\xi) \sigma_N(x) \D \Psi(\xi) - p_{\min}
$$
can be bounded from above. However,
one cannot directly apply this procedure because our kernel is not symmetric under exchanging the input and output parameters $x$
and $\xi$.
Overcoming this roadblock motivates a dedicated further study. 

\end{onehalfspace}


\begin{thebibliography}{NGSA{\etalchar{+}}13}

\bibitem[AAR00]{askey}
George~E. Andrews, Richard Askey, and Ranjan Roy.
\newblock {\em Special functions}, volume~71 of {\em Encycl. Math. Appl.}
\newblock Cambridge University Press, paperback edition, 2000.

\bibitem[BM23]{baldi2023effective}
Lorenzo Baldi and Bernard Mourrain.
\newblock {On the effective Putinar's Positivstellensatz and moment
  approximation}.
\newblock {\em Math. Program.}, 200(1):71--103, 2023.

\bibitem[BS24]{baldi2024degree}
Lorenzo Baldi and Lucas Slot.
\newblock {Degree bounds for Putinar’s Positivstellensatz on the hypercube}.
\newblock {\em SIAM J. Appl. Algebra Geom.}, 8(1):1--25, 2024.

\bibitem[Dav25]{davidson}
Kenneth~R. Davidson.
\newblock {\em Functional analysis and operator algebras}, volume~13 of {\em
  CMS/CAIMS Books Math.}
\newblock Springer, 2025.

\bibitem[dKL20]{deKlerk2020}
Etienne de~Klerk and Monique Laurent.
\newblock Worst-case examples for {Lasserre}'s measure-based hierarchy for
  polynomial optimization on the hypercube.
\newblock {\em Math. Oper. Res.}, 45(1):86--98, 2020.

\bibitem[dKL22]{deKlerk2022}
Etienne de~Klerk and Monique Laurent.
\newblock Convergence analysis of a {Lasserre} hierarchy of upper bounds for
  polynomial minimization on the sphere.
\newblock {\em Math. Program.}, 193(2 (B)):665--685, 2022.

\bibitem[dKLS17]{deKlerk2017}
Etienne de~Klerk, Monique Laurent, and Zhao Sun.
\newblock Convergence analysis for {Lasserre}'s measure-based hierarchy of
  upper bounds for polynomial optimization.
\newblock {\em Math. Program.}, 162(1-2 (A)):363--392, 2017.

\bibitem[FF21]{fawzi}
Kun Fang and Hamza Fawzi.
\newblock The sum-of-squares hierarchy on the sphere and applications in
  quantum information theory.
\newblock {\em Math. Program.}, 190(1-2 (A)):331--360, 2021.

\bibitem[JAM15]{JamesonStirling}
G.~J.~O. JAMESON.
\newblock A simple proof of stirling's formula for the gamma function.
\newblock {\em Math. Gaz.}, 99(544):68--74, 2015.

\bibitem[KMRZ25]{korda2025convergence}
Milan Korda, Victor Magron, and Rodolfo Rios-Zertuche.
\newblock Convergence rates for sums-of-squares hierarchies with correlative
  sparsity.
\newblock {\em Math. Program.}, 209(1):435--473, 2025.

\bibitem[KMV25]{klep2025sums}
Igor Klep, Victor Magron, and Jurij Vol{\v{c}}i{\v{c}}.
\newblock Sums of squares certificates for polynomial moment inequalities.
\newblock {\em Found. Comput. Math.}, pages 1--43, 2025.

\bibitem[KSV23]{klep2023globally}
Igor Klep, Claus Scheiderer, and Jurij Vol{\v{c}}i{\v{c}}.
\newblock Globally trace-positive noncommutative polynomials and the unbounded
  tracial moment problem.
\newblock {\em Math. Ann.}, 387(3):1403--1433, 2023.

\bibitem[Las01]{lasserre2001global}
Jean~B. Lasserre.
\newblock Global optimization with polynomials and the problem of moments.
\newblock {\em SIAM J. Optim.}, 11(3):796--817, 2001.

\bibitem[Las07]{lasserre}
Jean~B. Lasserre.
\newblock A sum of squares approximation of nonnegative polynomials.
\newblock {\em SIAM Rev.}, 49(4):651--669, 2007.

\bibitem[Las11]{lasserre2011new}
Jean~B. Lasserre.
\newblock A new look at nonnegativity on closed sets and polynomial
  optimization.
\newblock {\em SIAM J. Optim.}, 21(3):864--885, 2011.

\bibitem[Lau09]{laurent}
Monique Laurent.
\newblock Sums of squares, moment matrices and optimization over polynomials.
\newblock In {\em Emerging applications of algebraic geometry. Papers of the
  IMA workshops Optimization and control, January 16--20, 2007 and Applications
  in biology, dynamics, and statistics, March 5--9, 2007, held at IMA,
  Minneapolis, MN, USA}, pages 157--270. New York, NY: Springer, 2009.

\bibitem[LM19]{lasserre2019sdp}
Jean~B. Lasserre and Victor Magron.
\newblock In {SDP} relaxations, inaccurate solvers do robust optimization.
\newblock {\em SIAM J. Optim.}, 29(3):2128--2145, 2019.

\bibitem[LS23]{laurent2023effective}
Monique Laurent and Lucas Slot.
\newblock {An effective version of Schm{\"u}dgen’s Positivstellensatz for the
  hypercube}.
\newblock {\em Optim. Lett.}, 17(3):515--530, 2023.

\bibitem[LS26]{OverviewConvergenceRates}
Monique Laurent and Lucas Slot.
\newblock An overview of convergence rates for sum of squares hierarchies
  in polynomial optimization.
\newblock In Shin'ichi Oishi, Hisashi Okamoto, and Ken Hayami, editors, {\em
  Recent Developments in Industrial and Applied Mathematics}, pages 149--176,
  Singapore, 2026. Springer Nature Singapore.

\bibitem[Mag25]{rateprod}
Victor Magron.
\newblock Convergence rates for polynomial optimization on set products.
\newblock Preprint, {arXiv}:2505.18580 [math.{OC}] (2024), 2025.

\bibitem[Mar08]{marshall}
Murray Marshall.
\newblock {\em Positive polynomials and sums of squares}, volume 146 of {\em
  Math. Surv. Monogr.}
\newblock American Mathematical Society (AMS), 2008.

\bibitem[MED21]{magron2021exact}
Victor Magron and Mohab~Safey El~Din.
\newblock On exact {R}eznick, {H}ilbert-{A}rtin and {P}utinar's
  representations.
\newblock {\em J. Symb. Comput.}, 107:221--250, 2021.

\bibitem[MM22]{mai2022complexity}
Ngoc Hoang~Anh Mai and Victor Magron.
\newblock {On the complexity of Putinar--Vasilescu's Positivstellensatz}.
\newblock {\em J. Complex.}, 72:101663, 2022.

\bibitem[NGSA{\etalchar{+}}13]{navascues2013paradox}
Miguel Navascu{\'e}s, Artur Garc{\'\i}a-S{\'a}ez, Antonio Ac{\'\i}n, Stefano
  Pironio, and Martin~B Plenio.
\newblock A paradox in bosonic energy computations via semidefinite programming
  relaxations.
\newblock {\em New J. Phys.}, 15(2):023026, 2013.

\bibitem[NS07]{nie2007complexity}
Jiawang Nie and Markus Schweighofer.
\newblock {On the complexity of Putinar's Positivstellensatz}.
\newblock {\em J. Complex.}, 23(1):135--150, 2007.

\bibitem[Put93]{putinar1993positive}
Mihai Putinar.
\newblock Positive polynomials on compact semi-algebraic sets.
\newblock {\em Indiana Univ. Math. J.}, 42(3):969--984, 1993.

\bibitem[Rai71]{rainville}
Earl~D. Rainville.
\newblock {\em Special functions}.
\newblock {Chelsea} {Publishing} {Comp}, 1971.

\bibitem[Rob55]{robbins}
Herbert Robbins.
\newblock A remark on {Stirling}'s formula.
\newblock {\em Am. Math. Mon.}, 62:26--29, 1955.

\bibitem[Sch91]{Schmudgen1991}
Konrad Schm\"udgen.
\newblock The {K}-moment problem for compact semi-algebraic sets.
\newblock {\em Math. Ann.}, 289:203--206, 1991.

\bibitem[Sch04]{schweighofer2004complexity}
Markus Schweighofer.
\newblock On the complexity of {S}chm{\"u}dgen's {P}ositivstellensatz.
\newblock {\em J. Complex.}, 20(4):529--543, 2004.

\bibitem[Sch17]{schmuedgen:TheMomentProblem}
Konrad Schm{\"u}dgen.
\newblock {\em The Moment Problem}.
\newblock Springer, 2017.

\bibitem[SL23]{slot2023sum}
Lucas Slot and Monique Laurent.
\newblock Sum-of-squares hierarchies for binary polynomial optimization.
\newblock {\em Math. Program.}, 197(2):621--660, 2023.

\bibitem[Slo22]{slot2022sum}
Lucas Slot.
\newblock Sum-of-squares hierarchies for polynomial optimization and the
  {C}hristoffel--{D}arboux kernel.
\newblock {\em SIAM J. Optim.}, 32(4):2612--2635, 2022.

\bibitem[Sze75]{szego:ortoPoly}
Gabor Szeg{\"o}.
\newblock {\em Orthogonal polynomials. 4th ed}, volume~23 of {\em Colloq.
  Publ.}
\newblock American Mathematical Society (AMS), 1975.

\bibitem[Wak12]{waki2012generate}
Hayato Waki.
\newblock How to generate weakly infeasible semidefinite programs via
  {L}asserre’s relaxations for polynomial optimization.
\newblock {\em Optim. Lett.}, 6(8):1883--1896, 2012.

\end{thebibliography}
\newcommand{\etalchar}[1]{$^{#1}$}

\end{document}